\numberwithin{equation}{section}
\newtheorem{theorem}{Theorem}
\newtheorem{lemma}[theorem]{Lemma}
\newtheorem{proposition}[theorem]{Proposition}
\newtheorem{remark}[theorem]{Remark}
\newtheorem{corollary}[theorem]{Corollary}
\newcommand{\R}{\mathbb R}
\newcommand{\rr}{\mathbb R}
\newcommand{\calR}{\mathcal R}
\renewcommand{\leq}{\leqslant}
\renewcommand{\le}{\leqslant}
\renewcommand{\geq}{\geqslant}
\renewcommand{\ge}{\geqslant}
\begin{document}

\author{Mariel S\'aez}
\address[Mariel S\'aez]{Facultad de Matem\'aticas, 
Pontificia Universidad Cat\'olica de Chile,
Avenida Vicu\~na Mackenna 4860, Macul, 
6904441 Santiago, Chile}
\email{mariel@mat.puc.cl}

\author{Enrico Valdinoci}
\address[Enrico Valdinoci]{Department of Mathematics and Statistics,
University of Western Australia,
35 Stirling Hwy, Crawley WA 6009, Australia, and
School of Mathematics and Statistics,
University of Melbourne,
Peter Hall Building,
Parkville VIC 3010,
Australia, and
Weierstra{\ss} Institut f\"ur Angewandte Analysis
und Stochastik, Mohrenstra{\ss}e 39, 10117 Berlin, Germany,
and Dipartimento di Matematica, Universit\`a degli studi di Milano,
Via Saldini 50, 20133 Milan, Italy, and
Istituto di Matematica Applicata e Tecnologie Informatiche,
Consiglio Nazionale delle Ricerche,
Via Ferrata 1, 27100 Pavia, Italy}
\email{enrico@mat.uniroma3.it}

\title[On the evolution by fractional mean curvature]{On the evolution by fractional mean curvature}

\begin{abstract}
In this paper we study smooth solutions to a fractional mean curvature flow 
equation. We establish a comparison principle and consequences such as uniqueness 
and finite extinction time for compact solutions. We also establish evolutions 
equations for fractional geometric objects that in turn
yield the preservation of certain 
quantities, such as the
positivity of the fractional mean curvature.
\end{abstract}
\maketitle

\section{Introduction}

In the recent literature, an intense study has been performed
on some fractional counterparts of the classical perimeter
and of the motion by mean curvature. The interest in this kind
of topics comes from several considerations. First of all,
from the theoretical point of view, the analysis of nonlocal and fractional 
operators has an ancient tradition,
which have been vividly renovated recently by new exciting
discoveries.
In particular, a notion of fractional perimeter has been introduced
in~\cite{CRS}
and its relation with a fractional mean curvature flow
was discussed in detail in~\cite{IMB, CaSo}.

Roughly speaking, given~$s\in(0,1)$ the fractional perimeter in the whole
of~$\R^n$ of a bounded set~$E$ may be seen as
the seminorm in~$H^{s/2}(\R^n)$ of the characteristic function of~$E$
(and this notion may be also localized inside a bounded domain~$\Omega\subset\R^n$). The first variation of
the fractional perimeter functional may be seen
as a fractional counterpart of the mean curvature.
As $s\to1$, these notions approach the classical
objects in different senses (see e.g.~\cite{brezis, davila, WW,
CV, ADP, CV2}
for details). The limit as~$s\to0$ has also been
taken into account under various circumstances
(see e.g.~\cite{MAZ0, DPFV0}).

These fractional theories of geometric type found
very often concrete applications in real-world problems.
For instance, fractional perimeter functionals
naturally appear in the large-scale description of
interfaces of nonlocal phase transitions (see~\cite{SV-ga1, SV-ga2}).
A very natural application arises also in computer science:
indeed, the square pixels of small side~$\epsilon$
produce, along the diagonal, an error of order one for
the classical perimeter,
but an error of order only~$\epsilon^{1-s}$ for
the fractional perimeter. In this sense,
fractional objects are very useful to ``average out''
the errors caused by the possible fine anisotropic structure of the media.

Many results of great interest about the  fractional
mean curvature flow have been recently obtained
in~\cite{CMP1, CMP2, CMP3}.
See also~\cite{AV} for a detailed study of the fractional
mean curvature, with analogies and important differences
with respect to the classical case.
The question of the regularity
of the minimal surfaces corresponding to the
fractional perimeter has been investigated in~\cite{CRS, SV, CV2, Fi, BFV, JS},
several connections with the isoperimetric
problems have been studied in~\cite{Fu, FFMMM, I4}
and remarkable examples of surfaces of vanishing and constant fractional
mean curvature have been recently constructed in~\cite{Dav, CFSW}.

In this work we are interested in studying classical solutions to the $L^2$-gradient flow associated to the fractional perimeter. More precisely,  we consider a set $E_0$ and we are interested in a family $E_t$ that satisfies for every $x\in \partial E_t$ the law\footnote{As customary
in geometric flows, we use the notation~$x\in\partial E_t$
to denote the points of the evolving hypersurface.
Notice that, with a slight abuse of notation,
we also implicitly take~$x=x_t$ to be a function of~$t$,
due to the evolutionary nature of the problem.
Also, if we take~$\partial E_t$ to be parameterized
by a normal map of the sphere~$f:S^{n-1}\to[0,+\infty)$,
we also use the notation~$\partial E_t\ni x=f(\omega)\,\omega$.
Of course, since the surface is evolving in time,
this notation has to be read as~$x_t=f(\omega,t)\,\omega$,
but we will try to simplify notation whenever possible,
still keeping the arguments unambiguous.} of motion
\begin{equation} \partial_t x \cdot \nu=-H_s, \label{fmcfgeneral}\end{equation}
where $\nu$ is the outer normal to $E_t$ and the quantity $H_s$ is the fractional mean curvature 
defined by
\begin{equation}\label{Hs}
H_s(x,E):=\lim_{\delta\searrow0} s(1-s) \int_{\R^n\setminus B_\delta(x)}\frac{
\tilde\chi_E(y)}{|x-y|^{n+s}}\,dy.\end{equation}
Here above and in the sequel we use the notation
$$ \tilde\chi_E(y):=\chi_{\R^n\setminus E}(y)
-\chi_E(y),$$
while~$\chi_E$ is the classical indicator function of~$E$, that is~$1$ on~$E$
and~$0$ on~$\R^n\setminus E$.  We also assume that the parameter $s$
belongs to the interval~$(0,1)$.
Notice that with this convention the mean curvature of a sphere is positive
(more details on this case will be given in the forthcoming
Section~\ref{example}). Moreover, under this convention,
the $s$-perimeter of solutions to \eqref{fmcfgeneral} decreases in the fastest direction;
In fact it holds that (see Theorem \ref{derivatives non local quantities})
$$ \partial_t P_s(E_t)=
-\int_{\partial E_t} H_s^2(y)\,d{\mathcal{H}}^{n-1}(y)\leq 0.$$

The flow described by equation \eqref{fmcfgeneral} is the natural analog of the mean curvature flow, which has been studied largely in the literature (see for instance \cite{KEMCFBook}, \cite{EckerHuiskenAnn}, \cite{H}, \cite{HS}, \cite{M} , \cite{SS} and references therein). The mean curvature flow has been used in several contexts that range from  modeling  interface transition \cite{Mullins} to obtain topological classification of certain surfaces \cite{HS, BH}.

The mean curvature flow is a quasilinear geometric equation of parabolic nature that has regularizing effects as long as the mean curvature remains bounded (i.e. solutions are $C^\infty$ in space and time while the mean curvature is bounded), but it may form singularities in finite time. One of the main topics within the subject is the study of singularity formation  during the evolution. 
The 
first important result in this direction is due to G. Huisken \cite{H} who showed that convexity is preserved by the flow and that singularities only form at an extinction time at which the surface collapses to a ``round point'', that is after appropriate rescaling
 convex surfaces are asymptotic to spheres. Later on, it has been proved that in fact the flow preserves $k$-convexity for any $1\leq k\leq n-1$ (\cite{HS}) and that homothetic solutions play an important role in the understanding of singularity formation. 
In this paper we show that $H_s$-convexity is preserved by the fractional
flow (see Section \ref{for}) and we observe that in fact spheres are self-similar solutions to the flow (see Section \ref{example}).

Another important classical example of evolution by mean curvature flow is the evolution of entire graphs with linear growth. In \cite{EckerHuiskenAnn} it is shown that in that case the evolution exists and it is smooth for all times.  The estimates of that work were later 
localized in \cite{EckerHuiskenInvent} to obtain short time estimates for any evolution. Other graphical evolutions have been studied in \cite{SS}. In Section \ref{EGS} we show that  graphical solutions to \eqref{fmcfgeneral} have bounded $H_s$- curvature for all times and are in fact $C^\infty$. A key element of the proof is the preserved quantity $(\nu\cdot e_n)^{-1}$ which is known as the height function. 
On the other hand,  star-shaped surfaces also have a preserved quantity and we briefly address this case in Section \ref{ESS}.

Other results that we present here
are a comparison principle, the preservation of the
positivity of $H_s$ and some estimates for entire graphs.

The organization of the paper is as follows: Section \ref{sez:SC} is devoted to formulate Equation \eqref{fmcfgeneral} for star-shaped surfaces and entire graphs. We compute in particular the example of an evolving sphere. In Section \ref{comparison-section} we show that a comparison principle holds for the flow and as a corollary we find bounds on the maximal existence time and uniqueness of smooth solutions. Section \ref{EGQ} is devoted to compute the evolution of local and non-local geometric quantities. Of particular interest is the evolution equation of $H_s$ that is given by
$$\frac{\partial_t  H_s}{2 s(1-s)}(x)= 
\textrm{P.V.}\int_{\partial E_t}
 \frac{H_s(y)-H_s(x)
}{|x
-y|^{n+s}}dy+ H_s(x) \textrm{P.V.} \int_{\partial E_t}
 \frac{1-\nu(x)\cdot \nu(y)
}{|x
-y|^{n+s}}dy.$$
This equation implies that if the initial condition satisfies $H_s>0$ then this is preserved by the flow. This result is proved in Section \ref{for}. In Section \ref{EGS} we prove bounds for graphical solutions and in Section \ref{ESS} that star-shapedness is preserved by the flow as long as the fractional curvature remains bounded.

\section{Some special cases}\label{sez:SC}

In this section, we consider some particular forms of
the fractional mean curvature motion, namely the cases
in which the evolving surface is the boundary of
a star-shaped domain or it is a graph in a given direction.
A simple and concrete example of fractional mean curvature evolution
for star-shaped surfaces is given by the spheres,
in which the equation can be explicitly solved by scale invariance. On the other hand, planes are trivial examples of graphical evolutions.

\subsection{Evolution of star-shaped surfaces}

In this subsection we assume that the initial set is of the form
$$ E_0=\big\{ \rho\omega,\ \omega \in S^{n-1},\ \rho\in [0,f_0(\omega)]\big\}$$
with~$\nu(p)\cdot p\ge0$ for any~$p\in\partial E_0$, where~$\nu(p)$ is the outer
unit normal at~$p$.

We deal with the motion of~$\partial E_0$ by its fractional mean curvature.  We assume that this evolution is regular and star-shaped around the origin for all times $t\in [0,T)$
That is, we consider \begin{eqnarray*}
&&E_t=\big\{ \rho\omega,\ \omega \in S^{n-1},\ \rho\in [0,f(\omega,t)]\big\} \\
\hbox{ and } && \partial E_t=\big\{ f(\omega,t)\omega,\ \omega \in S^{n-1}\big\}\end{eqnarray*}
with~$f\in C^2\big(S^{n-1}\times(0,+\infty), [0,+\infty)\big)
\cap  C^0\big(S^{n-1}\times[0,+\infty), [0,+\infty)\big)$ and $f>0$.

In order to write 
~\eqref{fmcfgeneral} more explicitly
in dependence of~$f$ we extend the function~$f=f(\cdot,t)$,
that was originally defined on~$S^{n-1}$, to the whole of~$\R^n\setminus\{0\}$
by homogeneity, namely we suppose, without loss of generality, that~$
f:\R^n\setminus\{0\}\rightarrow[0,+\infty)$, with
\begin{equation}\label{f de}
f(x) = f\left( \frac{x}{|x|}\right)
\ {\mbox{ for every }} x\in\R^n\setminus\{0\}.\end{equation}
Notice that we omitted, for simplicity, the dependence
on the time~$t$ in the notation above. 
Similarly, given~$\omega\in S^{n-1}$, unless otherwise
specified, we denote by~$\nu$ the exterior normal at
the point~$f(\omega)\omega$. 
Hence we have:

\begin{lemma}\label{EXP}
The external normal $\nu$ of~$E$ can be expressed in terms of~$f$ by
\begin{equation}\label{NU}
\nu =\frac{f \omega-\nabla f}{\sqrt{|\nabla f|^2+f^2}}.\end{equation}
Also, given any~$\omega\in S^{n-1}$,
for any~$\eta\in\R^n$ orthogonal to~$\omega$ we have that
\begin{equation}\label{99}
(\nabla f(\omega)\cdot\eta)\,(\omega\cdot\nu)+f(\omega)\eta\cdot\nu=0.
\end{equation}
Finally,~\eqref{fmcfgeneral} is equivalent to
\begin{equation}\label{s-flow-in-f} \left\{
\begin{matrix}
& \partial_t f(\omega,t) = 
-H_s(*,E_t)\displaystyle\frac{\sqrt{|\nabla f|^2+f^2}}{f} ,\qquad
{\mbox{for every $\omega\in S^{n-1}$ and $t>0$,}}\\
\\
& f(\omega,0)=f_0(\omega),\qquad
{\mbox{for every $\omega\in S^{n-1}$,}}
\end{matrix}
\right.\end{equation}
where $*=f(\omega,t)\omega$.
\end{lemma}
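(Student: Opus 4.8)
The plan is to prove the three assertions in order: first derive the normal formula \eqref{NU}, then read off the orthogonality relation \eqref{99} as an immediate consequence, and finally reformulate \eqref{fmcfgeneral} by computing the normal velocity of the parametrized family $\partial E_t=\{f(\omega,t)\omega\}$.

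\textbf{Step 1: the normal \eqref{NU}.} Since $f$ is extended to $\R^n\setminus\{0\}$ by homogeneity of degree $0$ as in \eqref{f de}, the boundary $\partial E$ is the level set $\{g=0\}$ where $g(x):=|x|-f(x)$. Hence $\nu$ is parallel to $\nabla g=\frac{x}{|x|}-\nabla f(x)$. Evaluating at $x=f(\omega)\omega$ and using that $\nabla f$ is homogeneous of degree $-1$ (so $\nabla f(f(\omega)\omega)=\frac{1}{f(\omega)}\nabla f(\omega)$ if one normalizes $\nabla f$ on $S^{n-1}$; I would fix the precise normalization convention here), we get $\nabla g\parallel f(\omega)\omega-\nabla f(\omega)$. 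Because $\nabla f(\omega)\cdot\omega=0$ (the degree-$0$ homogeneity forces the radial derivative to vanish), the vectors $f\omega$ and $-\nabla f$ are orthogonal, so the norm of $f\omega-\nabla f$ is exactly $\sqrt{|\nabla f|^2+f^2}$; dividing by this and checking the sign (the coefficient of $\omega$ is $f>0$, consistent with the \emph{outer} normal on a star-shaped set) gives \eqref{NU}.

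\textbf{Step 2: the relation \eqref{99}.} Take $\eta\perp\omega$. Dotting \eqref{NU} with $\eta$ gives $\eta\cdot\nu=\frac{-\nabla f(\omega)\cdot\eta}{\sqrt{|\nabla f|^2+f^2}}$, while dotting with $\omega$ gives $\omega\cdot\nu=\frac{f}{\sqrt{|\nabla f|^2+f^2}}$ (using $\nabla f\cdot\omega=0$). Multiplying the first identity by $f$ and the second by $\nabla f(\omega)\cdot\eta$ and adding yields $f(\eta\cdot\nu)+(\nabla f(\omega)\cdot\eta)(\omega\cdot\nu)=0$, which is exactly \eqref{99}. This step is purely algebraic and should be a one-liner.

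\textbf{Step 3: the flow equation \eqref{s-flow-in-f}.} Parametrize $\partial E_t$ by $X(\omega,t)=f(\omega,t)\omega$, so $\partial_t X=(\partial_t f)\,\omega$. The scalar normal velocity is $\partial_t X\cdot\nu=(\partial_t f)(\omega\cdot\nu)=(\partial_t f)\cdot\frac{f}{\sqrt{|\nabla f|^2+f^2}}$ by Step 1. The law of motion \eqref{fmcfgeneral} at the point $*=f(\omega,t)\omega$ reads $\partial_t X\cdot\nu=-H_s(*,E_t)$, so equating the two expressions and solving for $\partial_t f$ gives $\partial_t f=-H_s(*,E_t)\frac{\sqrt{|\nabla f|^2+f^2}}{f}$, which is the first line of \eqref{s-flow-in-f}; the initial condition is just $f(\cdot,0)=f_0$ by hypothesis. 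One subtlety worth a remark: \eqref{fmcfgeneral} only prescribes the normal component of the velocity, so strictly speaking the radial parametrization is a \emph{choice}; I would note that any two smooth families tracing the same evolving sets differ by a tangential reparametrization and hence induce the same $f$ up to that freedom, so \eqref{s-flow-in-f} is the correct equation for the radial graph function.

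\textbf{Main obstacle.} The only genuinely delicate point is bookkeeping the homogeneity conventions for $\nabla f$: the paper writes $\nabla f(\omega)$ but uses $f$ extended to $\R^n\setminus\{0\}$, and one must be consistent about whether $\nabla f(\omega)$ denotes the ambient gradient evaluated on $S^{n-1}$ or the intrinsic spherical gradient, and about the degree-$(-1)$ scaling of $\nabla f$ away from the sphere. Everything else is a short computation with level sets and an orthogonal decomposition into the radial direction $\omega$ and its complement.
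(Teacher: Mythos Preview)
Your proposal is correct. Steps~2 and~3 are essentially identical to the paper's argument (dot~\eqref{NU} with $\omega$ and with $\eta$, then use $\omega\cdot\nu=f/\sqrt{|\nabla f|^2+f^2}$ to pass between \eqref{fmcfgeneral} and \eqref{s-flow-in-f}).

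The genuine difference is in Step~1. You derive the normal via the \emph{level-set} description $\partial E=\{|x|-f(x)=0\}$ and compute the Euclidean gradient $\nabla g$, which forces you to track the degree~$(-1)$ homogeneity of $\nabla f$ off the sphere --- precisely the bookkeeping you flag as the main obstacle. The paper instead works \emph{parametrically}: it takes a curve $\tau\mapsto\omega(\tau)$ in $S^{n-1}$, writes a generic tangent vector to $\partial E$ as $T=(\nabla f\cdot\dot\omega)\,\omega+f\dot\omega$, and checks directly that $(f\omega-\nabla f)\cdot T=0$ using only $\nabla f(\omega)\cdot\omega=0$ and $\omega\cdot\dot\omega=0$. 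This sidesteps the homogeneity scaling entirely, since one never evaluates $\nabla f$ away from $S^{n-1}$. Your approach is slightly more conceptual (normal to a level set), the paper's slightly more hands-on; both are short and standard.
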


\begin{proof} 
For the analogue of~\eqref{s-flow-in-f}
in the classical mean curvature flow see, e.g., formula~(2.8)
in~\cite{Ur}.
As a matter of fact,
the formula and its
proof are exactly the same as in the classical case, 
since here the specific choice of~$H$ or~$H_s$
(being the classical or nonlocal mean curvature)
does not play any role. We provide the details for the facility
of the reader.
For this, first we point out that, by~\eqref{f de},
\begin{equation}\label{000}
\nabla f(\omega)\cdot\omega = 
\left.\frac{d}{d\tau} f(\tau \omega)\right|_{\tau=1}
=\left.\frac{d}{d\tau} f(\omega)\right|_{\tau=1}
=0\end{equation}
for any~$\omega\in S^{n-1}$. Also, if~$\tau\mapsto \omega(\tau)$
is a curve on~$S^{n-1}$, we have that
\begin{equation}\label{00} \omega\cdot \dot\omega=\frac{d}{d\tau}\frac{|\omega|^2}{2}
=\frac{d}{d\tau}\frac{1}{2}=0\end{equation}
and a generic tangent vector at~$\partial E$ is
$$ T:=\frac{d}{d\tau} (f \omega) = (\nabla f\cdot\dot\omega)\omega
+f\dot\omega.$$
We observe that
$$ (f\omega-\nabla f)\cdot T=
f\,(\nabla f\cdot\dot\omega)
+f^2\dot\omega\cdot\omega-
(\nabla f\cdot\dot\omega)(\nabla f\cdot\omega)-
f\,(\nabla f\cdot \dot\omega)=0,$$
thanks to~\eqref{000} and~\eqref{00}.
This shows that the vector~$f\omega-\nabla f$
is normal to~$\partial E$. Also, by~\eqref{000}, the component of~$f\omega-\nabla f$
in direction~$\omega$ is~$f$, which is positive:
accordingly, this normal vector
points outwards and this completes the proof of~\eqref{NU}.

Using \eqref{000} and~\eqref{NU}, we also obtain that
\begin{equation}\label{pre99}
\omega\cdot\nu=
\frac{f}{\sqrt{|\nabla f|^2+f^2}},\end{equation}
and this shows that~\eqref{fmcfgeneral} and~\eqref{s-flow-in-f}
are equivalent (recall indeed that~$x=f(\omega)\omega$).

It remains to prove~\eqref{99}. For this,
we take~$\eta$ orthogonal to~$\omega$ and we use~\eqref{NU}
and~\eqref{pre99} to compute
\begin{eqnarray*}
&& (\nabla f\cdot\eta)\,(\omega\cdot\nu)+f\eta\cdot\nu
\\ &&\qquad =
\frac{f\,(\nabla f\cdot\eta)}{\sqrt{|\nabla f|^2+f^2}}
+\frac{f^2 \eta\cdot\omega-f(\eta\cdot\nabla f)}{\sqrt{|\nabla f|^2+f^2}}
\\ &&\qquad =
\frac{f\,(\nabla f\cdot\eta)}{\sqrt{|\nabla f|^2+f^2}}
+\frac{0-f(\eta\cdot\nabla f)}{\sqrt{|\nabla f|^2+f^2}}
\end{eqnarray*}
that clearly equals to zero and proves~\eqref{99}.
\end{proof}

\subsubsection{A concrete example: The evolution of spheres}\label{example}

In this section we compute the example
of a concrete evolution, namely we show that the
spheres shrink self-similarly in finite time.
We think it is a very interesting open problem
to determine whether or not these are
the only embedded self-similar shrinking solutions of~\eqref{fmcfgeneral}.

\begin{lemma}\label{SC}
We have, 
for any~$x\in\partial B_R(0)$,
\begin{equation}\label{EH2} 
H_s(x, B_R(0)) = \varpi R^{-s},\end{equation}
for some~$\varpi>0$.
\end{lemma}

\begin{proof} 
First we show that,
for any~$x\in\partial B_1(0)$,
\begin{equation}\label{EH1} 
H_s(x, B_1(0))=\varpi.\end{equation}
By rotational invariance of the integrals, we have that
$H_s(x_1,B_1(0))=H_s(x_2,B_1(0))$ for every $x_1,x_2\in \partial B_1(0)$,
thus showing~\eqref{EH1}.
Moreover, if~$\omega\in S^{n-1}$
and~$x=R\omega$,
by changing variable~$\tilde y:=Ry$, we see that
\begin{equation}\label{ca}\begin{split}
H_s(x,B_R(0))=&\lim_{\delta\searrow0} s(1-s) \int_{\R^n\setminus B_\delta(x)}\frac{
\tilde\chi_{B_R(0)}(\tilde y)}{|R\omega-\tilde y|^{n+s}}\,d\tilde y\\=& R^n
\lim_{\delta\searrow0} s(1-s) \int_{\R^n\setminus B_{R^{-1}\delta}(x)}\frac{
\tilde\chi_{B_R(0)}(Ry)}{|R\omega-Ry|^{n+s}}\,dy
\\=& R^{-s}
\lim_{\delta\searrow0} s(1-s) \int_{\R^n\setminus B_\delta(x)}\frac{
\tilde\chi_{B_1(0)}(y)}{|\omega-y|^{n+s}}\,dy
\\=& R^{-s} H_s(\omega,B_1(0))
.\end{split}\end{equation}
This, together with~\eqref{EH1}, proves~\eqref{EH2}.
\end{proof}

\begin{corollary}\label{COS}
Let~$\varpi$ be as in~\eqref{EH2} and~$C_0:=\varpi\,(s+1)$.
Let~$R(t):=(R_0^{s+1}-C_0 t)^{\frac{1}{s+1}}$.
Then~$B_{R(t)}(0)$ is a star-shaped
solution to fractional mean curvature flow with initial condition $B_{R_0}(0)$
and it collapses to the origin in the finite time~$\frac{R_0^{s+1}}{C_0}$.
\end{corollary}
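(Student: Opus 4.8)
The plan is to reduce the claim to a scalar ODE by means of Lemma~\ref{EXP} and the computation of~$H_s$ for balls in~\eqref{EH2}. Since each~$B_{R(t)}(0)$ is star-shaped about the origin with radial function~$f(\omega,t)=R(t)$ independent of~$\omega$, we have~$\nabla f=0$, so the geometric factor~$\sqrt{|\nabla f|^2+f^2}/f$ appearing in~\eqref{s-flow-in-f} is identically~$1$. Hence, by Lemma~\ref{EXP}, the family~$\{B_{R(t)}(0)\}$ solves~\eqref{fmcfgeneral} with the prescribed initial datum if and only if~$R(\cdot)$ satisfies
\begin{equation*}
\partial_t R(t)=-H_s\big(R(t)\omega,\,B_{R(t)}(0)\big)\quad\mbox{for every }\omega\in S^{n-1},\qquad R(0)=R_0 .
\end{equation*}

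Next I would substitute~\eqref{EH2}, which gives~$H_s(R(t)\omega,B_{R(t)}(0))=\varpi\,R(t)^{-s}$ (in particular the right-hand side does not depend on~$\omega$, so the system above collapses to a single equation), turning it into the autonomous scalar ODE~$\dot R(t)=-\varpi\,R(t)^{-s}$. Multiplying by~$R(t)^s$ and noting that the left-hand side then equals~$\frac{1}{s+1}\frac{d}{dt}R(t)^{s+1}$, I would integrate over~$[0,t]$ to obtain~$R(t)^{s+1}=R_0^{s+1}-\varpi(s+1)\,t=R_0^{s+1}-C_0\,t$, that is~$R(t)=(R_0^{s+1}-C_0t)^{1/(s+1)}$, which is exactly the function in the statement. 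This formula is well defined, smooth, and strictly positive precisely for~$t\in[0,R_0^{s+1}/C_0)$, and~$R(t)\to0$ as~$t\nearrow R_0^{s+1}/C_0$, so~$B_{R(t)}(0)$ collapses to the origin at the stated finite time~$R_0^{s+1}/C_0$.

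It remains to record the qualitative hypotheses. For~$\partial B_R(0)$ the outer unit normal at~$p=R\omega$ is~$\nu=\omega$, so~$\nu(p)\cdot p=R\ge0$ and each~$B_{R(t)}(0)$ is trivially star-shaped about the origin; moreover~$f(\omega,t)=R(t)$ is smooth and strictly positive on~$(0,T)$ and continuous up to~$t=0$, with~$T:=R_0^{s+1}/C_0$, matching the regularity imposed on star-shaped evolutions. For transparency I would also verify directly that the radial parametrization~$x(t)=R(t)\omega$ realizes~\eqref{fmcfgeneral}: indeed~$\partial_t x(t)\cdot\nu=\dot R(t)=-\varpi\,R(t)^{-s}=-H_s(x(t),B_{R(t)}(0))$ by~\eqref{EH2}.

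I do not foresee a genuine obstacle: the substance is the integration of a separable ODE. The only point deserving a line of comment is that~\eqref{fmcfgeneral} constrains only the normal velocity, which does not depend on the parametrization of the moving front, so exhibiting the single radial parametrization above already certifies that~$\{B_{R(t)}(0)\}$ evolves by fractional mean curvature; this is precisely what the reduction through Lemma~\ref{EXP} encodes. Uniqueness among smooth solutions is not needed here, being a separate consequence of the comparison principle of Section~\ref{comparison-section}.
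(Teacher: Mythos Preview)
Your proposal is correct and follows essentially the same route as the paper: reduce to the scalar equation~$\partial_t f=-H_s$ via Lemma~\ref{EXP} (since~$\nabla f=0$) and then invoke~\eqref{EH2}. The only cosmetic difference is that you integrate the separable ODE~$\dot R=-\varpi R^{-s}$ to obtain~$R(t)$, whereas the paper simply plugs the stated~$R(t)$ into~\eqref{s-flow-in-f} and checks that both sides agree.
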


\begin{proof} We only need to show that~\eqref{s-flow-in-f}
is satisfied with~$f(\omega,t):=R(t)$ and~$f_0(\omega):=R_0$.
For this, we use Lemma~\ref{SC} to compute
$$ \partial_t f+ H_s\,\frac{\sqrt{|\nabla f|^2+f^2}}{f} =
-\frac{C_0}{s+1} (R_0^{s+1}-C_0 t)^{\frac{-s}{s+1}} +H_s=
\varpi\,(R_0^{s+1}-C_0 t)^{\frac{-s}{s+1}}+\varpi\,R^{-s}=0,$$
that shows the validity of~\eqref{s-flow-in-f}.
\end{proof}

{F}rom the results in Section \ref{comparison-section}, we will
see that the one provided in Corollary~\ref{COS} 
is indeed the unique smooth solution
of the fractional mean curvature flow with spherical initial datum.

It is also easy to check that a similar computation yields an analogous result for the evolution of
cylinders.

\subsection{Evolution of graphical surfaces}
In this subsection we assume that the  initial set is of the form
$$ E_0=\big\{ (x,z), x \in \rr^{n-1}, z\in [-\infty, u(x)]\big\}.$$
The appropriate choice of normal in this situation is given by $$\nu(x,u(x))=\frac{(-\nabla u, 1)}{\sqrt{1+|\nabla u|^2}}.$$ 

We suppose that 
$$ E_t=\big\{ (x,z), x \in \rr^{n-1}, z\in (-\infty, u(x,t)]\big\} ,$$
with~$u\in C^2\big(\rr^{n-1}\times(0,+\infty), [0,+\infty)\big)\cap  
C^0\big(\rr^{n-1}\times[0,+\infty), [0,+\infty)\big)$.
In this setting,
we have that
$$ \partial E_t=\big\{ (x,u(x,t)), 
\, x\in \rr^{n-1}\big\}$$
and
the geometric flow in~\eqref{fmcfgeneral} is equivalent to
\begin{equation}\label{s-flow-graphical} \left\{
\begin{matrix}
& \partial_t u(x,t) = -H_s(x,E_t)\displaystyle \sqrt{|\nabla u|^2+1} ,\qquad
{\mbox{for every $x\in \rr^{n-1}$ and $t>0$,}}\\
\\
& u(x,0)=u_0(x),\qquad
{\mbox{for every $x\in \rr^{n-1}$.}}
\end{matrix}
\right.\end{equation}
A concrete example in this case is any linear $u$, which has fractional mean curvature equal to 0.

\begin{remark}
Equations \eqref{s-flow-in-f}  and \eqref{s-flow-graphical} are well posed imposing weaker regularity conditions on $f$ and $u$ respectively
\end{remark}

\section{Comparison principle}\label{comparison-section}

In this section we show that two surfaces evolving under fractional mean 
curvature flow that are initially nested remain nested while the 
evolution is smooth. 
To state the result, we will
consider two evolving surfaces, say~$E_t$ and~$F_t$,
and we use the notation
for points~$x(\cdot,t)\in \partial E_t$ and $y(\cdot,t)\in \partial F_t$. 
Then, we have the following comparison 
result:

\begin{theorem}\label{comparison general}
Let $E_t$ and $F_t$ be two smooth solutions to \eqref{fmcfgeneral} in $[0,
T)$ such that $E_0\subseteq F_0$. 
Assume additionally that $\partial_t x(\cdot,t)$,  $\partial_t 
y (\cdot,t)$ are continuous in $[0,T)$.
Then $E_t\subseteq F_t$.
\end{theorem}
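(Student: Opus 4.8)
The plan is a first-contact argument based on the monotonicity of $H_s$ under set inclusion. Suppose the conclusion fails and set $t_0:=\sup\{t\in[0,\omega):\ E_\tau\subseteq F_\tau\ \text{for all }\tau\le t\}$. Since inclusion is a closed condition and the flows, together with their normal velocities, are continuous, one gets $E_{t_0}\subseteq F_{t_0}$, $t_0<\omega$, and a sequence $t_k\downarrow t_0$ with $E_{t_k}\not\subseteq F_{t_k}$. If $\partial E_{t_0}$ were at positive distance from $\partial F_{t_0}$, the compact containment $E_t\subseteq\operatorname{int}F_t$ would persist for $t$ slightly beyond $t_0$, contradicting the choice of the $t_k$; hence there is a contact point $x_0\in\partial E_{t_0}\cap\partial F_{t_0}$, at which---because $E_{t_0}\subseteq F_{t_0}$---the two boundaries are tangent and share the same outer unit normal $\nu$.

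Next I use that, from \eqref{Hs} and the identity $\tilde\chi_{E_{t_0}}-\tilde\chi_{F_{t_0}}=2(\chi_{F_{t_0}}-\chi_{E_{t_0}})\ge0$,
\[
H_s(x_0,E_{t_0})-H_s(x_0,F_{t_0})=\lim_{\delta\searrow0}s(1-s)\int_{\R^n\setminus B_\delta(x_0)}\frac{2\big(\chi_{F_{t_0}}(y)-\chi_{E_{t_0}}(y)\big)}{|x_0-y|^{n+s}}\,dy\ \ge\ 0,
\]
with strict inequality if $|F_{t_0}\setminus E_{t_0}|>0$. Choosing coordinates with $x_0=0$, $\nu=e_n$, I write $\partial E_t$ and $\partial F_t$ near $x_0$ as graphs $x_n=\phi_E(x',t)$, $x_n=\phi_F(x',t)$ (legitimate for $t$ close to $t_0$ by smoothness). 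Inclusion for $t\le t_0$ gives $\phi_E\le\phi_F$, and the contact gives $\phi_E(0,t_0)=\phi_F(0,t_0)=0$ and $\nabla_{x'}\phi_E(0,t_0)=\nabla_{x'}\phi_F(0,t_0)=0$; hence $\psi:=\phi_F-\phi_E\ge0$ on a backward-in-time neighborhood of $(0,t_0)$ with $\psi(0,t_0)=0$, so $\partial_t\psi(0,t_0)\le0$. On the other hand \eqref{fmcfgeneral} evaluated at $x_0$ gives $\partial_t\phi_E(0,t_0)=-H_s(x_0,E_{t_0})$ and $\partial_t\phi_F(0,t_0)=-H_s(x_0,F_{t_0})$ (the area Jacobians equal $1$ there because the tangential gradients vanish), so $\partial_t\psi(0,t_0)=H_s(x_0,E_{t_0})-H_s(x_0,F_{t_0})\ge0$. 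Therefore equality holds throughout, the displayed integral vanishes, and $\chi_{E_{t_0}}=\chi_{F_{t_0}}$ a.e., i.e. $E_{t_0}=F_{t_0}$.

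The main obstacle is precisely this borderline alternative $E_{t_0}=F_{t_0}$: the geometric argument cannot exclude it, and it is tantamount to the uniqueness of smooth solutions (comparison applied in both directions turns $E_0=F_0$ into $E_t=F_t$). I would dispose of it by first reducing to the case $\operatorname{dist}(\partial E_0,\partial F_0)>0$ through a slight outward perturbation of $F_0$ that still admits a smooth solution---for instance, when $F_0$ is star-shaped, the rescaled flow $\lambda F_{\lambda^{-(1+s)}t}$ with $\lambda>1$, which is again a solution by the scaling and time-reparametrization invariance underlying the explicit sphere evolution (Corollary~\ref{COS})---proving $E_t\subseteq\lambda F_{\lambda^{-(1+s)}t}$ and letting $\lambda\to1^{+}$. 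For such a pair the argument above shows that inclusion can be lost only through a collapse $|F_t\setminus E_t|\to0$; this residual possibility is ruled out by representing the two boundaries near the collapse time as normal graphs over a fixed reference hypersurface, on which (by \eqref{s-flow-in-f}--\eqref{s-flow-graphical}) they satisfy the same nonlocal parabolic equation whose principal part---the linearization of $-H_s$, a nonlocal operator of order $1+s<2$---obeys a maximum principle, and by running a Gr\"onwall estimate on their difference, the nonlocal remainder being controlled by that difference's $C^1$ norm; this forces coincidence on an interval and hence, by continuation, on all of $[0,\omega)$, contradicting the loss of inclusion. The delicate points are the uniformity of these graph representations and the estimate of the nonlocal remainder; everything preceding them is soft.
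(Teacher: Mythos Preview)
Your core argument---define the first contact time, observe that the boundaries are tangent there with a common outer normal, and compare normal velocities using the monotonicity $H_s(x_0,E_{t_0})\ge H_s(x_0,F_{t_0})$ from $E_{t_0}\subseteq F_{t_0}$---is exactly the paper's approach. Your graph computation is a clean way to package the velocity comparison that the paper writes more tersely as $0\ge(\partial_t x_{F_t}-\partial_t x_{E_t})\cdot\nu_E=H_s(E_t,x_t)-H_s(F_t,x_t)$.

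Where you diverge is in the handling of the equality case. The paper treats the strict-inclusion situation first, simply asserts that at a first contact where the boundaries are about to cross the velocity comparison is strict (hence an immediate contradiction with the curvature inequality), and disposes of the non-strict case $E_0\subseteq F_0$ in one line by continuity at $t=0$. You instead push the argument to the alternative $E_{t_0}=F_{t_0}$ and then try to eliminate it via a perturbation-plus-backward-uniqueness scheme.

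That last part is where your proof has a genuine gap. The dilation $\lambda F_{\lambda^{-(1+s)}t}$ is indeed another solution (your scaling is correct), but it requires a dilation center and so covers only star-shaped $F_0$; for general $F_0$ you have no perturbed comparison flow at hand, since short-time existence for perturbed data is not established in the paper. Worse, even after separating the initial data your own argument lands you back in the alternative ``the two flows coincide at $t_0$'', which you then try to exclude by a Gr\"onwall/maximum-principle argument on the linearized nonlocal operator; that step is only sketched, is essentially backward uniqueness for the flow, and is of at least the same difficulty as the theorem itself. In short, you have traded the paper's brief (and admittedly soft) first-contact reasoning for a harder problem that you do not resolve. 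If you want to close the loop along the paper's lines, note that when $E_{t_0}\subsetneq F_{t_0}$ the integral $\int 2(\chi_{F_{t_0}}-\chi_{E_{t_0}})/|x_0-y|^{n+s}\,dy$ is \emph{strictly} positive, so $\partial_t\psi(0,t_0)>0$, which already contradicts $\partial_t\psi(0,t_0)\le0$; no further machinery is needed in that branch.
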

\begin{proof}

We first assume that the closure of~$E_0$ is strictly contained in the interior of~$F_0$ and suppose that there is a time $t_0$ and a point $x_t
$ at which $E_{t_0}$ and  $\partial F_{t_0}$ touch for the first time and the normal velocity of $E_{t_0}$ at $x_t
$ is bigger than the normal velocity of $\partial F_{t_0}$ at that point (i.e. the boundaries cross at point of space time). Since $\partial E_{t_0}$ and  $\partial F_{t_0}$ are tangential at $x_t
$ the normal vectors agree at that point. Then we have
\begin{equation}\label{67:BA:01}
0\geq \left(\partial_t x_{F_t}-\partial_t x_{E_t}\right)\cdot \nu_{E}(x_t
)=H_s(E_t,x_t
)-H_s(F_t,x_t
)\end{equation}
Moreover,
we may suppose that the strict inclusion
\begin{equation}\label{67:BA:02}
E_{t_0}\subset F_{t_0}
\end{equation}
holds true (since if~$E_{t_0}=F_{t_0}$, the backward
flow would give~$E_{t}=F_{t}$ for all~$t\in[0,T)$).
{F}rom~\eqref{67:BA:02}, 
we have the strict inequality~$H_s(E_{t_0},x_t
)>H_s(F_{t_0},x_t
),$ which, inserted in~\eqref{67:BA:01}, yields a contradiction.

If the closure of~$E_0$ is not strictly contained in the interior of~$F_0$, then we can proceed as before by observing that the equation holds in the limit as $t\to 0$. \end{proof}

Theorem \ref{comparison general} implies uniqueness of smooth solutions to \eqref{fmcfgeneral}.

\begin{corollary}
There is at most
one smooth solution to \eqref{fmcfgeneral}.
\end{corollary}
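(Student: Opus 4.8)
The plan is to derive uniqueness as an immediate consequence of the comparison principle (Theorem~\ref{comparison general}), applied twice with the roles of the two solutions reversed. Suppose $E_t$ and $F_t$ are both smooth solutions to~\eqref{fmcfgeneral} on $[0,\omega)$, with continuous normal velocities as in the hypotheses of Theorem~\ref{comparison general}, sharing the same initial datum $E_0 = F_0$. In particular $E_0 \subseteq F_0$ and $F_0 \subseteq E_0$.

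First I would apply Theorem~\ref{comparison general} with the pair $(E_t, F_t)$: since $E_0 \subseteq F_0$, the theorem gives $E_t \subseteq F_t$ for all $t \in [0,\omega)$. Then I would apply the same theorem again with the roles exchanged, i.e.\ to the pair $(F_t, E_t)$: since $F_0 \subseteq E_0$ as well, we get $F_t \subseteq E_t$ for all $t$. Combining the two inclusions yields $E_t = F_t$ for every $t \in [0,\omega)$, which is the assertion.

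The only subtlety worth noting is that uniqueness is asserted for solutions with a common initial set; one should record explicitly that the hypothesis of the corollary is precisely that both solutions start from the same $E_0$, and that both have the continuity of the normal velocity required to invoke Theorem~\ref{comparison general}. There is no real obstacle here: the whole content has already been packed into the comparison principle, and the corollary is just the symmetric double application. The one place to be slightly careful is that Theorem~\ref{comparison general} is stated with non-strict inclusion $E_0 \subseteq F_0$ (its proof already handles the non-strict case by a limiting argument as $t \to 0$), so both applications above are legitimate even though neither inclusion $E_0 \subseteq F_0$ nor $F_0 \subseteq E_0$ is strict. Hence I would simply write: by Theorem~\ref{comparison general} applied to $E_0 \subseteq F_0$ we obtain $E_t \subseteq F_t$, and applied to $F_0 \subseteq E_0$ we obtain $F_t \subseteq E_t$; therefore $E_t = F_t$.
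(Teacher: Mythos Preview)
Your proposal is correct and follows exactly the same approach as the paper: assume $E_0=F_0$, apply Theorem~\ref{comparison general} in both directions to obtain $E_t\subseteq F_t$ and $F_t\subseteq E_t$, and conclude $E_t=F_t$. The paper's proof is even terser than yours, but the argument is identical.
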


\begin{proof}
Assume that $E_0=F_0$.
 By Theorem~\ref{comparison general}, we have that~$
F_t\subset E_t$ and $
E_t\subset F_t$.\end{proof}

By trapping the solution between balls, we obtain
estimates about the evolution of the fractional mean
curvature and the extinction time:

\begin{corollary}\label{T:S}
Let~$R>\delta>0$ and $E_t$ a solution to \eqref{fmcfgeneral}
such that there are $x_\delta$ and $x_R $  that satisfy $B_\delta(x_\delta)\subseteq E_0\subseteq B_R(x_R)$, then $B_{(\delta^{s+1}-C_0 t)^{\frac{1}{s+1}}}(x_\delta)\subseteq E_t\subset B_{(R^{s+1}-C_0 t)^{\frac{1}{s+1}}}(x_R)$.

In particular, if $f\in  C^1\big(S^{n-1}\times(0,T)\big)\cap  C^0\big(S^{n-1}\times[0,T]\big) $ is
a solution of~\eqref{s-flow-in-f}, with~$f(\omega,t)>0$
for every $(\omega,t)\in S^{n-1}\times[0,T]$,
 that satisfies
$\delta<f(\omega,0)<R$, for every~$\omega\in S^{n-1}$.
then 
\begin{equation}\label{ET-1}
(\delta^{s+1}-C_0 t)^{\frac{1}{s+1}}\leq f(\omega,t)\leq (R^{s+1}-C_0 t)^{\frac{1}{s+1}}.\end{equation}
Moreover,
the maximal existence time is bounded from above by $\frac{R^{s+1}}{C_0}$.
\end{corollary}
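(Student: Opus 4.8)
The plan is to trap $E_t$ between the two explicit shrinking spheres produced in Corollary~\ref{COS} and to invoke the comparison principle, Theorem~\ref{comparison general}. Write $R_\delta(t):=(\delta^{s+1}-C_0t)^{\frac1{s+1}}$ and $R_R(t):=(R^{s+1}-C_0t)^{\frac1{s+1}}$. By Corollary~\ref{COS}, $t\mapsto B_{R_\delta(t)}(x_\delta)$ is a smooth solution of~\eqref{fmcfgeneral} on $[0,\delta^{s+1}/C_0)$ with initial datum $B_\delta(x_\delta)$, and $t\mapsto B_{R_R(t)}(x_R)$ is a smooth solution on $[0,R^{s+1}/C_0)$ with initial datum $B_R(x_R)$; on these intervals the normal velocity $-H_s=-\varpi R(t)^{-s}$ is continuous, so these evolutions satisfy the hypotheses of Theorem~\ref{comparison general}.

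First I would apply Theorem~\ref{comparison general} to the ordered pair consisting of the inner sphere and $E_t$: from $B_\delta(x_\delta)\subseteq E_0$ we obtain $B_{R_\delta(t)}(x_\delta)\subseteq E_t$ for every $t$ in the common interval of existence, that is, for $t<\delta^{s+1}/C_0$, which is exactly the range on which $R_\delta(t)$ is a well-defined positive radius. Applying Theorem~\ref{comparison general} to the pair $E_t$ and the outer sphere, from $E_0\subseteq B_R(x_R)$ we get $E_t\subseteq B_{R_R(t)}(x_R)$ for $t<R^{s+1}/C_0$. Combining the two inclusions gives the asserted sandwiching. For the star-shaped special case, if $f$ solves~\eqref{s-flow-in-f} with $\delta<f(\omega,0)<R$ for all $\omega\in S^{n-1}$, then $B_\delta(0)\subseteq E_0\subseteq B_R(0)$, so the previous step with $x_\delta=x_R=0$ yields $B_{R_\delta(t)}(0)\subseteq E_t\subseteq B_{R_R(t)}(0)$; since $\partial E_t=\{f(\omega,t)\omega:\omega\in S^{n-1}\}$ with $E_t$ star-shaped about the origin, the inner inclusion forces $f(\omega,t)\ge R_\delta(t)$ and the outer one $f(\omega,t)\le R_R(t)$, which is~\eqref{ET-1}.

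Finally, for the bound on the maximal existence time I would use only the outer inclusion. If $E_t$ were a smooth solution on an interval strictly larger than $[0,R^{s+1}/C_0)$, then letting $t\nearrow R^{s+1}/C_0$ in $E_t\subseteq B_{R_R(t)}(x_R)$ and using the continuity of the flow would force $E_{R^{s+1}/C_0}$ to be contained in $\overline{B_{R_R(R^{s+1}/C_0)}(x_R)}=\{x_R\}$; but a smoothly evolving compact hypersurface cannot bound a region that has collapsed to a single point, a contradiction, so the maximal existence time is at most $R^{s+1}/C_0$. The only point requiring some care is the range of validity of the comparison: the inner sphere is a genuine smooth solution only up to its own extinction time $\delta^{s+1}/C_0$, so the lower inclusion is claimed only there (which is all one could hope for, since $R_\delta(t)$ ceases to be real afterwards), while it is the longer-lived outer sphere that controls the extinction time; beyond this bookkeeping, the argument is a direct application of Corollary~\ref{COS} and Theorem~\ref{comparison general}.
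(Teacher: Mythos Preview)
Your argument is correct and follows exactly the approach of the paper: the corollary is obtained by comparing $E_t$ with the explicit shrinking spheres of Corollary~\ref{COS} via Theorem~\ref{comparison general}. The paper's proof is in fact just the one-line remark that the result follows directly from Theorem~\ref{comparison general}, so you have simply written out the details (including the extinction-time argument) that the authors leave implicit.
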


\begin{proof}
The result follows directly from Theorem \ref{comparison general}.
\end{proof}



\section{The evolution of the geometric quantities} \label{EGQ}

In this section we study the evolution of local and nonlocal geometric quantities.

We first remark that equation \eqref{fmcfgeneral}
is invariant under reparameterizations:  Suppose that $x$ satisfies \eqref{fmcfgeneral} and consider
a reparameterization $\varphi(\omega, t)$. Then we have that~$\tilde{x}=x(\varphi(\omega,t), t)$ satisfies
$$\partial_t\tilde{ x}\cdot \tilde{\nu}=(Dx(\partial_t \varphi  )+\partial_t x   )\cdot \tilde{\nu}=-H_s(\tilde{x}).$$
Moreover, by reparameterizing the smooth
surface with a time dependent parameter it is possible to obtain an evolution equation that has tangent velocity equal to 0.

\begin{theorem}\label{reparameterization}
Suppose that $E_t$ is smooth
and satisfies the evolution equation \eqref{fmcfgeneral}. Then, there is a parameterization of $\partial E_t$ such that
\begin{equation} \partial_t x(t)= -H_s(x(t), E_t)\,\nu,\label{simplified eq}\end{equation}
for $x\in \partial E_t$.
\end{theorem}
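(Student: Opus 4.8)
The plan is to construct the reparameterization explicitly by solving an ordinary differential equation on the parameter space $S^{n-1}$ (or whatever the domain of the embedding is) that absorbs the tangential component of the velocity. Suppose $X:S^{n-1}\times[0,T)\to\R^n$ is a given smooth embedding with $\partial_t X\cdot\nu=-H_s(X,E_t)$, as guaranteed by \eqref{fmcfgeneral}; in general $\partial_t X$ will have a nonzero tangential part. We seek a time-dependent diffeomorphism $\psi(\cdot,t):S^{n-1}\to S^{n-1}$ with $\psi(\cdot,0)=\mathrm{id}$ so that the new parameterization $\tilde X(\zeta,t):=X(\psi(\zeta,t),t)$ satisfies $\partial_t\tilde X=-H_s(\tilde X,E_t)\,\nu$, i.e.\ has purely normal velocity.

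The key computation is the chain rule: $\partial_t\tilde X(\zeta,t)=DX(\psi(\zeta,t),t)\big[\partial_t\psi(\zeta,t)\big]+(\partial_t X)(\psi(\zeta,t),t)$. Decompose $(\partial_t X)$ into its normal and tangential parts, $(\partial_t X)=-H_s\,\nu+V^{\mathrm{tan}}$, where $V^{\mathrm{tan}}$ is tangent to $\partial E_t$. Since $DX$ maps $T_\omega S^{n-1}$ isomorphically onto the tangent space $T_{X(\omega,t)}\partial E_t$, there is a unique vector field $W(\omega,t)\in T_\omega S^{n-1}$ with $DX(\omega,t)[W(\omega,t)]=-V^{\mathrm{tan}}(X(\omega,t),t)$; explicitly $W=-(DX)^{-1}V^{\mathrm{tan}}$, which is smooth in $(\omega,t)$ by the smoothness of $E_t$ and of the flow. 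Then define $\psi$ as the flow of the (time-dependent) vector field $W$ on $S^{n-1}$, i.e.\ the solution of
\begin{equation*}
\partial_t\psi(\zeta,t)=W(\psi(\zeta,t),t),\qquad \psi(\zeta,0)=\zeta.
\end{equation*}
Standard ODE theory on the compact manifold $S^{n-1}$ gives a unique smooth solution, and $\psi(\cdot,t)$ is a diffeomorphism for each $t\in[0,T)$ (it is invertible with smooth inverse obtained by running the flow backwards). Substituting back, $\partial_t\tilde X=DX[W]+(-H_s\nu+V^{\mathrm{tan}})=-V^{\mathrm{tan}}-H_s\nu+V^{\mathrm{tan}}=-H_s\,\nu$, and because $\psi(\cdot,t)$ is a diffeomorphism the image $\tilde X(S^{n-1},t)$ is still all of $\partial E_t$, so $H_s$ evaluated at $\tilde X$ is the same geometric quantity; the initial condition $\tilde X(\cdot,0)=X(\cdot,0)$ is preserved. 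Writing $x(t)=\tilde X(\zeta,t)$ for fixed $\zeta$ yields \eqref{simplified eq}.

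The main obstacle, and the only point requiring a little care rather than routine bookkeeping, is justifying that $W$ is smooth and that its flow $\psi$ exists on the entire interval $[0,T)$: smoothness follows because $DX$ is a smooth bundle isomorphism (using that $E_t$ is a smooth embedded hypersurface and the flow is smooth, with $H_s$ smooth along it by the regularity results underlying the earlier sections), and global existence of the flow on $[0,T)$ holds because $S^{n-1}$ is compact, so integral curves of a smooth time-dependent vector field cannot escape to infinity and are defined for as long as the vector field is, namely on all of $[0,T)$. Everything else — the chain-rule identity, the tangential/normal splitting, invariance of $H_s$ under reparameterization (already noted in the paragraph preceding the statement) — is straightforward.
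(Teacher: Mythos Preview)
Your proof is correct and follows essentially the same approach as the paper's: both construct a time-dependent reparameterization by solving an ODE on the parameter domain whose right-hand side is chosen to cancel the tangential component of $\partial_t X$. Your version is in fact somewhat more complete than the paper's, since you invoke compactness of $S^{n-1}$ to obtain the flow $\psi$ globally on $[0,T)$, whereas the paper only asserts existence ``for time close to $t_0$''.
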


\begin{proof}
We follow the analogous proof for other geometric flows (see \cite{KEMCFBook} for instance). 
For this, as usual we denote
the metric of the evolving surface by~$g_{ij}$
and
the inverse of the metric by~$g^{ij}$.
Assume that $\partial E_t$ is parameterized by spatial coordinates $(\omega_1,\ldots, \omega_{n-1})\in U\subset \R^{n-1}$.
 Then we have that
$x(\omega,t)\in \partial E_t$  satisfies  \eqref{fmcfgeneral}. We want to reparameterize $\omega$ in term of new time-dependent  local coordinates.  Hence, we assume that the coordinates  $(\omega_1,\ldots, \omega_{n-1})$ are parameterized by a spatial parameter $\Theta=(\theta_1,\ldots, \theta_{n-1})$ and time $t$. Then we define
$$\Gamma(\Theta,t)=x(\omega(\Theta,t),t).$$
We have 
\begin{align*}\partial_t \Gamma=&\sum_i\partial_{\omega_i} x(\omega(\Theta,t),t)\partial_t\omega_i+\partial_t x (q,t)|_{q=\omega(\Theta,t)}\\
=&-H_s(\Gamma(\Theta,t)) \nu+\left. (\tau_i\partial_t\omega_i+(\partial_t x)^T)\right|_{q=\omega(\Theta,t)},
\end{align*}
where $\tau_i$ is the tangential vector $\partial_{\omega_i} x(\omega(\Theta,t),t)$ and $(\partial_tx)^T=\partial_t x-(\partial_t x\cdot \nu)\nu$ 
is the tangential part of $\partial_t x$.

Standard ODE theory implies the existence of a solution to  $$\partial_t\omega_i(\Theta, t)=(\partial_t x)^T g^{ij} \omega^T\cdot \tau_j,$$ with 
$\omega(\Theta, t_0)=\omega$ (the original parameterization at time $t_0$).

Hence, the surface $\Gamma(\Theta,t) $ satisfies \eqref{simplified eq} for time close to $t_0$. \end{proof}

In the next subsection we assume that $\Gamma(t)$ is the reparameterization of $\partial E_t$ described by 
Theorem \ref{reparameterization}. For simplicity, we still denote the spatial parameter as $\omega\in U \subset  \rr^{n-1}$ or $x\in \rr^{n-1}$.

\subsection{Evolution of local quantities}
In this subsection we consider the evolution of some geometric quantities associated to $\partial E_t$. We assume that~$\partial E_t$ is smooth. 

Consider $\Gamma(t)$ satisfying \eqref{simplified eq}.  
We start by recalling the definition of the metric $g_{ij}$, the second 
fundamental form $a_{ij}$ and the square of  its norm~$|A|^2$. Here we denote by $ 
(m_{ij})$ the matrix of components $m_{ij}$ and we use Einstein's 
summation convention whenever repeated indices occur. We denote the 
inverse of the metric as $g^{ij}$ and we raise indices of matrices to 
indicate contraction by this matrix (e.g. $m^i_j=g^{ij}m_{ij}$).
In this setting, we have:
\begin{equation}\label{PRE}
\begin{split}g_{ij}=\,&\partial_{\omega_i} \Gamma \cdot \partial_{\omega_j}\Gamma,\\
 (g^{ij})=\,&(g_{ij})^{-1},\\
 a_{ij}=\,&\partial_{\omega_i}\nu\cdot\partial_{\omega_j} \Gamma=-\nu \cdot \partial_{\omega_j}\partial_{\omega_j} \Gamma=\partial_{\omega_j}\nu\cdot\partial_{\omega_i} \Gamma,\\
 |A|^2=\,&g^{ij}a_{ik} g^{kl}a_{jl}.
\end{split}\end{equation}

We also denote $$\nabla^{\Gamma} F=g^{ij}\partial_{\omega_j} F \partial_{\omega_i}\Gamma,$$ which correspond to projecting the gradient of $F$ on the tangent space (for a globally defined function) and 
 $$\nabla^{\Gamma}_i X^j=\partial_{\omega_i} X^j+C^j_{ik}X^k,$$ where $C^j_{ik}$ are the Christoffel symbols on the surface. 
 
\begin{theorem}\label{local quantities}
Assume that $\Gamma(\Theta,t)=\partial E_t$ is parameterized such that it satisfies  \eqref{simplified eq}.
Then we have that
\begin{eqnarray}
\label{evolutionmetric}  \partial_t g_{ij}=&-2H_s a_{ij},
\\  \label{evolutioninverse}   \partial_t g^{ij}=&2H_s a^{ij},
\\ \label{evolutionnormal}  \partial_t \nu=&\nabla^{\Gamma} H_s,
\\  \label{evolutioniifundform} \partial_t a_{ij}
=&\nabla^\Gamma_i\nabla^\Gamma_j H_s-H_s a_{ik}a^{k}_j
\\  \label{evolutionnorm2ndfundform} \partial_t 
|A|^2=&2a^{ij}\nabla^\Gamma_i\nabla^\Gamma_j H_s+2H_sa_{ik}a^k_ja^{ij}.
\end{eqnarray}
\end{theorem}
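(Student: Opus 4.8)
\textbf{Proof proposal for Theorem \ref{local quantities}.}

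The plan is to exploit the fact that the reparameterization of Theorem \ref{reparameterization} makes \eqref{simplified eq} a purely normal flow, so that the classical computations for geometric flows driven by a normal speed carry over verbatim with the speed function equal to $-H_s$. I would first differentiate the definition of $g_{ij}$ in \eqref{PRE}: since $\partial_t\Gamma = -H_s\nu$, we get $\partial_t g_{ij} = \partial_{\omega_i}(-H_s\nu)\cdot\partial_{\omega_j}\Gamma + \partial_{\omega_i}\Gamma\cdot\partial_{\omega_j}(-H_s\nu)$. The terms involving derivatives of $H_s$ multiply $\nu\cdot\partial_{\omega_j}\Gamma = 0$, and the remaining terms give $-H_s(\partial_{\omega_i}\nu\cdot\partial_{\omega_j}\Gamma + \partial_{\omega_i}\Gamma\cdot\partial_{\omega_j}\nu) = -2H_s a_{ij}$ by the symmetry of $a_{ij}$ recorded in \eqref{PRE}; this proves \eqref{evolutionmetric}. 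Then \eqref{evolutioninverse} follows from differentiating $g^{ik}g_{kj}=\delta^i_j$, which yields $\partial_t g^{ij} = -g^{ik}g^{jl}\partial_t g_{kl} = 2H_s g^{ik}g^{jl}a_{kl} = 2H_s a^{ij}$.

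Next I would handle \eqref{evolutionnormal}. Since $|\nu|^2=1$ we have $\nu\cdot\partial_t\nu=0$, so $\partial_t\nu$ is tangential and it suffices to compute $\partial_t\nu\cdot\partial_{\omega_j}\Gamma$. Differentiating the orthogonality relation $\nu\cdot\partial_{\omega_j}\Gamma=0$ in time gives $\partial_t\nu\cdot\partial_{\omega_j}\Gamma = -\nu\cdot\partial_{\omega_j}\partial_t\Gamma = -\nu\cdot\partial_{\omega_j}(-H_s\nu) = \partial_{\omega_j}H_s$, using again $\nu\cdot\partial_{\omega_j}\nu=0$. Hence $\partial_t\nu = g^{ij}(\partial_{\omega_j}H_s)\partial_{\omega_i}\Gamma = \nabla^\Gamma H_s$, which is \eqref{evolutionnormal}. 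For \eqref{evolutioniifundform} I would differentiate $a_{ij} = -\nu\cdot\partial_{\omega_i}\partial_{\omega_j}\Gamma$: one term is $-\partial_t\nu\cdot\partial_{\omega_i}\partial_{\omega_j}\Gamma = -\nabla^\Gamma H_s\cdot\partial_{\omega_i}\partial_{\omega_j}\Gamma$, and the other is $-\nu\cdot\partial_{\omega_i}\partial_{\omega_j}(-H_s\nu) = \nu\cdot\partial_{\omega_i}\partial_{\omega_j}(H_s\nu)$. Expanding the second derivative of $H_s\nu$ via the Gauss--Weingarten relations (writing $\partial_{\omega_i}\partial_{\omega_j}\Gamma = C^k_{ij}\partial_{\omega_k}\Gamma - a_{ij}\nu$ and $\partial_{\omega_i}\nu = a_i^k\partial_{\omega_k}\Gamma$) and combining with the first term, the tangential contributions reorganize into the covariant Hessian $\nabla^\Gamma_i\nabla^\Gamma_j H_s$ and the normal contributions produce $-H_s a_{ik}a^k_j$; this is the standard computation and gives \eqref{evolutioniifundform}. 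Finally \eqref{evolutionnorm2ndfundform} is obtained by differentiating $|A|^2 = g^{ij}a_{ik}g^{kl}a_{jl}$ using the product rule together with \eqref{evolutioninverse} and \eqref{evolutioniifundform}; the terms involving $\partial_t g^{ij}$ and $\partial_t g^{kl}$ contribute multiples of $H_s a_{ik}a^k_j a^{ij}$, the two terms involving $\partial_t a_{ik}$ and $\partial_t a_{jl}$ each contribute $a^{ij}\nabla^\Gamma_i\nabla^\Gamma_j H_s$ and a further multiple of $H_s a_{ik}a^k_j a^{ij}$, and collecting coefficients gives exactly $2a^{ij}\nabla^\Gamma_i\nabla^\Gamma_j H_s + 2H_s a_{ik}a^k_j a^{ij}$.

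The point I expect to require the most care is \eqref{evolutioniifundform}: keeping track of the Christoffel-symbol terms so that the bare partial derivatives $\partial_{\omega_i}\partial_{\omega_j}H_s$ combine correctly with the connection terms to form the genuine covariant Hessian $\nabla^\Gamma_i\nabla^\Gamma_j H_s$, and making sure the sign conventions in \eqref{PRE} for $a_{ij}$ are used consistently, is where a sign slip is most likely. It is worth noting that none of this uses any special structure of $H_s$ as a \emph{nonlocal} curvature: the formulas \eqref{evolutionmetric}--\eqref{evolutionnorm2ndfundform} hold for any normal flow with speed $-H_s$ and are identical to the classical mean curvature flow evolution equations with $H$ replaced by $H_s$, since the nonlocality only enters later when one computes $\partial_t H_s$ itself.
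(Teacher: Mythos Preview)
Your proposal is correct and follows essentially the same route as the paper: both differentiate the definitions in \eqref{PRE} using $\partial_t\Gamma=-H_s\nu$, obtain \eqref{evolutioninverse} from $g_{ij}g^{jk}=\delta_i^k$, and derive \eqref{evolutionnormal} by pairing $\partial_t\nu$ with the tangent frame. The only stylistic difference is in \eqref{evolutioniifundform}: the paper passes to normal coordinates at the point (so that the Christoffel symbols vanish and $\partial_{\omega_i}\partial_{\omega_j}H_s=\nabla^\Gamma_i\nabla^\Gamma_j H_s$ there) and then invokes coordinate invariance, whereas you propose to keep general coordinates and use the Gauss--Weingarten relations to assemble the covariant Hessian directly; both are standard and yield the same identity.
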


\begin{proof}
The proofs are similar to the local case (see \cite{KEMCFBook} for instance).
First, we prove~\eqref{evolutionmetric}
by computing the evolution of the metric: we recall
that~$\partial_{\omega_i}\Gamma$ is a tangent vector, thus \begin{equation}\label{QUESTA}
\partial_{\omega_i}\Gamma\cdot\nu=0.\end{equation} Also~$\Gamma$ satisfies~\eqref{simplified eq},
and so~$\partial_t \Gamma=-H_s\nu$. As a consequence,
\begin{align*}\partial_t g_{ij}=\,
&\partial_{\omega_i}(\partial_t \Gamma)\cdot  \partial_{\omega_j}\Gamma
+ \partial_{\omega_i}\Gamma\cdot  \partial_{\omega_j}(\partial_t\Gamma)\\
=\,&\partial_{\omega_i}(-H_s \nu)\cdot  \partial_{\omega_j}\Gamma
+ \partial_{\omega_i}\Gamma\cdot  \partial_{\omega_j}(-H_s \nu)\\
=\,&-2H_sa_{ij} \end{align*}
and so
we obtain \eqref{evolutionmetric}.

Now, since $g_{ij}\,g^{jk}=\delta_{i}^{k}$ (here we are adding on the repeated index $j$), using \eqref{evolutionmetric}
we have that
$$ 0 =\partial_t\delta_{i}^{k}= \partial_t g_{ij}\, g^{jk}+
g_{ij}\, \partial_t g^{jk}= -2H_s\,a_{ij}g^{jk}+
g_{ij}\, \partial_t g^{jk},$$
which gives \eqref{evolutioninverse}.

Also, using that~$\nu\cdot \nu =1$ and \eqref{QUESTA},
we have that
$$ \partial_t \nu \cdot \nu= 0,$$
that
$$ \partial_{\omega_i} \nu \cdot \nu= 0$$
and
$$ \partial_t \nu \cdot \partial_{\omega_i}\Gamma=
-\nu \cdot  \partial_{\omega_i}(\partial_t \Gamma)
=\nu\cdot \partial_{\omega_i}(H_s  \nu)=
\partial_{\omega_i}H_s.$$
Hence, decomposing $\partial_t\nu$ along the orthogonal directions~$\{\nu,
\partial_{\omega_1} \Gamma,\dots, \partial_{\omega_{n-1}} \Gamma\}$, we conclude that
$$\partial_t \nu=g^{ij}\partial_{\omega_j} H_s \partial_{\omega_i} \Gamma=\nabla^\Gamma H_s.$$
This completes the proof of \eqref{evolutionnormal}.

Now we use \eqref{PRE} and \eqref{evolutionnormal} and we obtain that
\begin{align*}\partial_t a_{ij}= \,&- \partial_t
\nu\cdot\partial_{\omega_j}\partial_{\omega_j} \Gamma+ \nu\cdot\partial_{\omega_j}\partial_{\omega_j}  (H_s\nu)\\
=\,&-\nabla^{\Gamma} H_s \cdot\partial_{\omega_j}\partial_{\omega_j} \Gamma
+ \partial_{\omega_j}\partial_{\omega_j}  H_s
+H_s  \nu\cdot\partial_{\omega_j}\partial_{\omega_j}  \nu.
\end{align*}
Moreover,
\begin{eqnarray*}
&& 0=\frac12 \partial_{\omega_j}\partial_{\omega_j}(\nu\cdot\nu)
=\partial_{\omega_j}(\nu\cdot\partial_{\omega_j}\nu)=
\nu\cdot\partial_{\omega_j} \partial_{\omega_j}\nu
+\partial_{\omega_j}\nu\cdot\partial_{\omega_j}\nu
\end{eqnarray*}
and so we see that
\begin{equation}\label{PKL09}
\partial_t a_{ij}= 
-\nabla^{\Gamma} H_s \cdot\partial_{\omega_j}\partial_{\omega_j} \Gamma
+ \partial_{\omega_j}\partial_{\omega_j}  H_s
-H_s  \partial_{\omega_j}\nu\cdot\partial_{\omega_j} \nu.\end{equation}
Now we assume that we have normal coordinates at $x_t
$. Then at $x_t
$ the metric $g_{ij}$ equals to~$\delta_{ij}$ and the Christoffel symbols are 0. In particular, formula~\eqref{PKL09} reduces to 
\begin{align*}\partial_t a_{ij}= & \partial_{\omega_j}\partial_{\omega_j}  H_s-H_s\partial_{\omega_i}\nu\cdot \partial_{\omega_j} \nu\\
=& \partial_{\omega_j}\partial_{\omega_j}  H_s-H_sa_{ik}a^k_j.\end{align*}
Since in normal coordinates $ \partial_{\omega_j}\partial_{\omega_j}  H_s= \nabla^\Gamma_i\nabla^\Gamma_j H_s$ and the latter is a coordinate invariant quantity, this establishes~\eqref{evolutioniifundform}.

Now we prove~\eqref{evolutionnorm2ndfundform}. For this,
we use~\eqref{evolutioninverse}
and~\eqref{evolutioniifundform}, and we see that
\begin{eqnarray*}
\partial_t (g^{ij}a_{ik})&=&\partial_t g^{ij}a_{ik}+
g^{ij}\partial_t a_{ik}\\
&=&
2H_s a^{ij}a_{ik}+g^{ij} (\nabla_i^\Gamma\nabla_k^\Gamma H_s-
H_s a_{im}a^m_k)\\&=&
2H_s a^{ij}a_{ik}+g^{ij} \nabla_i^\Gamma\nabla_k^\Gamma H_s-H_s a^j_{m}a^m_k.
\end{eqnarray*}
Therefore
\begin{eqnarray*}
\partial_t (g^{ij}a_{ik})\,(g^{kl}a_{jl})&=&
2H_s a^{ij}a_{ik}g^{kl}a_{jl}
+g^{ij}g^{kl}a_{jl}\nabla_i^\Gamma\nabla_k^\Gamma H_s-
H_s a^j_{m}a^m_k g^{kl}a_{jl}
\\&=&
2H_s a^{ij}a_{i}^l a_{jl}
+a^{ik}\nabla_i^\Gamma\nabla_k^\Gamma H_s-
H_s a^j_{m} a^{ml} a_{jl}\\
&=&
H_s a^{ij}a_{i}^l a_{jl}
+a^{ik}\nabla_i^\Gamma\nabla_k^\Gamma H_s.\end{eqnarray*}
This and the fact that (recall~\eqref{PRE})
\begin{eqnarray*}
\partial_t |A|^2&=& \partial_t (g^{ij}a_{ik} g^{kl}a_{jl})\\
&=&\partial_t (g^{ij}a_{ik}) g^{kl}a_{jl}+
\partial_t (g^{kl}a_{jl}) g^{ij}a_{ik}\\
&=&2 \partial_t (g^{ij}a_{ik}) g^{kl}a_{jl}
\end{eqnarray*}
imply \eqref{evolutionnorm2ndfundform}.
\end{proof}

For further reference,
we also point out the following computation in local coordinates:

\begin{lemma}\label{evolution of tangents}
For local coordinates $\{\omega_1,\ldots, \omega_{n-1}\}$ we have that
$$\partial_t \left(\partial_{\omega_i} \Gamma\right)=-H_s 
a^j_i\partial_{\omega_j}\Gamma-\partial_{\omega_i}H_s \nu.$$
\end{lemma}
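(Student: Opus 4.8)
The plan is to differentiate the tangent vector $\partial_{\omega_i}\Gamma$ in time, using that $\Gamma$ satisfies the simplified flow equation $\partial_t\Gamma=-H_s\nu$ from \eqref{simplified eq}, and then decompose the resulting vector into its tangential and normal components along the orthonormal-in-direction frame $\{\nu,\partial_{\omega_1}\Gamma,\dots,\partial_{\omega_{n-1}}\Gamma\}$. The starting point is the interchange of partial derivatives,
$$\partial_t(\partial_{\omega_i}\Gamma)=\partial_{\omega_i}(\partial_t\Gamma)=\partial_{\omega_i}(-H_s\nu)=-(\partial_{\omega_i}H_s)\,\nu-H_s\,\partial_{\omega_i}\nu.$$
The first term is already normal; the work is to rewrite $\partial_{\omega_i}\nu$ in the tangent frame. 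Since $\nu\cdot\nu=1$ we have $\partial_{\omega_i}\nu\cdot\nu=0$, so $\partial_{\omega_i}\nu$ is purely tangential, and its components are obtained by pairing with $\partial_{\omega_j}\Gamma$: by the definition of the second fundamental form in \eqref{PRE}, $\partial_{\omega_i}\nu\cdot\partial_{\omega_j}\Gamma=a_{ij}$. Hence $\partial_{\omega_i}\nu=g^{jk}a_{ij}\,\partial_{\omega_k}\Gamma=a_i^{\,k}\partial_{\omega_k}\Gamma$, and substituting gives exactly the claimed formula $\partial_t(\partial_{\omega_i}\Gamma)=-H_s a_i^{\,j}\partial_{\omega_j}\Gamma-\partial_{\omega_i}H_s\,\nu$.

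There is essentially no obstacle here: this is a short direct computation of the same flavor as the proof of \eqref{evolutionnormal} in Theorem~\ref{local quantities}, and in fact it can be viewed as the "adjoint" statement to that one (the evolution of $\nu$ versus the evolution of the tangents). The only point that requires a word of care is the legitimacy of exchanging $\partial_t$ and $\partial_{\omega_i}$, which is justified by the assumed $C^2$ (indeed smooth) regularity of $\Gamma$ in space and time, and the fact that the decomposition into the chosen frame is pointwise and does not require that frame to be orthonormal — only that $\nu$ is a unit normal and the $\partial_{\omega_j}\Gamma$ span the tangent space, so that expanding a tangential vector in the $\partial_{\omega_j}\Gamma$ basis uses the inverse metric $g^{jk}$ precisely as above. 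I would also note that, alternatively, the identity follows by differentiating the relation $g_{ij}=\partial_{\omega_i}\Gamma\cdot\partial_{\omega_j}\Gamma$ and using \eqref{evolutionmetric}, but the direct approach above is cleaner and self-contained.
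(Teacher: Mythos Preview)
Your proof is correct and follows exactly the same approach as the paper: interchange $\partial_t$ and $\partial_{\omega_i}$, apply the product rule to $-H_s\nu$, and then use the Weingarten relation $\partial_{\omega_i}\nu=a^j_i\,\partial_{\omega_j}\Gamma$. The paper's version is simply terser, omitting the explicit justification that $\partial_{\omega_i}\nu$ is tangential and the use of $g^{jk}$ to raise the index.
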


\begin{proof} Since~$\Gamma$ satisfies~\eqref{simplified eq},
$$ \partial_t(\partial_{\omega_i}\Gamma)=
\partial_{\omega_i}(\partial_{t}\Gamma)=\partial_{\omega_i}(-H_s\nu)=
-\partial_{\omega_i}H_s\nu-H_s\partial_{\omega_i}\nu.$$
On the other hand, by definition
$$ \partial_{\omega_i}\nu=a^j_i\partial_{\omega_j}\Gamma,$$
which implies the result.
\end{proof}

\subsection{Evolution of non-local quantities}

In this subsection we will analyze the evolution of the perimeter, the fractional mean curvature and their
first order  spatial derivatives. In order to simplify the notation we write the point $x(t) \in \partial E_t$ and the unit normal vector $\nu(x(t))$  to $\partial E_t$ at $x(t)$ as
$$x_t:=x(t) \hbox{ and } \nu_t:=\nu(x(t)).$$ 
We remark that 
when we integrate on the surface   $\partial E_t$ the
integration variable, that we usually denote by $y$,  depends on $t$, but we do not make explicit this dependence. Note additionally that $v\cdot w$ denotes the standard dot product on $\rr^n$ between the vectors $v$ and $w$.


We observe that the integrand in~\eqref{Hs} carries a singular kernel, therefore
it is convenient to remove such singularity by using a cancellation.
We perform these computations here, and we will use them
in the forthcoming Section~\ref{for} to show that the positivity
of the fractional mean curvature is preserved by the geometric flow.

To this goal, we write 
$$H_s(x_t,E_t)= H_s^{\textrm{reg}}(x_t,E_t)+H_s^{\textrm{sing}}(x_t
,E_t), $$
with
\begin{equation} H_s^{\textrm{sing}}(x_t
,E_t)= \lim_{\delta\searrow0}  s(1-s) \int_{C_R^{\nu_t}(x_t
)\setminus B_\delta(x_t
)}\frac{
\tilde\chi_{E_t}(y)}{|x_t
-y|^{n+s}}\,dy, \hbox{ and }\label{Hsing}\end{equation}
\begin{equation} H_s^{\textrm{reg}}(x_t
,E_t)= s(1-s) \int_{\R^n\setminus C_R^{\nu_t}(x_t
)}\frac{
\tilde\chi_{E_t}(y)}{|x_t
-y|^{n+s}}\,dy,\label{Hreg}\end{equation}
where $C_R^{\nu_t}(x_t
)$ is a fixed
cylinder centered at $x_t
$ with flat direction parallel to the normal of the surface at $x_t
$, namely
$$ C_R^{\nu_t}(x_t
):=\Big\{
x\in\rr^n {\mbox{ s.t. }}x=x_t
+y {\mbox{ with }}
|y\cdot\nu(x_t
)|<R {\mbox{ and }}
|y-(y\cdot\nu(x_t
))\nu(x_t
)|<R
\Big\}.$$
In what follows, we denote the surface $\partial E_t$ as $\Gamma(\omega,t)$ and we assume that is parameterized such that \eqref{simplified eq} holds.
Consider $x_t
\in \Gamma$ and the epigraph of the tangent plane $\Pi$ at $x_t
$ given by

\begin{equation} \Pi(x_t
,E_t):= \{ \xi\in\R^n {\mbox{ s.t. }} \nu_t \cdot (\xi-x_t
)\ge0\},\label{defPi}\end{equation}
where $\nu_t$ is the unit normal to $\Gamma(t)$ at the point $x_t
$.

Note that for $R$ small enough, $\Gamma(t)$  can be written as a graph over the tangent plane at $x_t
\in \Gamma(t)$. More precisely, 
let $\nu_t$ be the normal vector at $x_t
$ and let us parameterize $\partial \Pi$ (or equivalently, the linear space perpendicular to $\nu_t$) in appropriate polar coordinates $(r, \varphi)\in [0,R]\times S^{n-2}$. Then 
using the implicit function theorem, near $x_t$ we may define
a function 
$h$ such that
\begin{equation}\Gamma (\omega,t)=x_t
+\rho M_{x_t
} \varphi+\rho h(\rho, \varphi)\nu_t.\label{defg}\end{equation}
Here  $\rho$ is the distance to $x_t
$ on  $\partial \Pi$.
Also, we are implicitly identifying~$\partial\Pi$ with~$\R^{n-1}$
and
embedding $(n-1)$-dimensional
spaces into~$n$-dimensional spaces,
namely~$M_{x_t}$ is an $(n-1)$-dimensional matrix
acting on a vector~$\varphi\in S^{n-2}\subset\R^{n-1}$:
then the product~$M_{x_t
}\varphi$, as an $(n-1)$-dimensional vector, has to be thought to lie
on~$\partial\Pi$, which in turn is naturally embedded in the ambient space~$\R^n$.
More precisely, $M_{x_t
}\varphi \in \partial \Pi$ is defined as follows: 

\medskip

 Assume that $x_t
=\Gamma(\bar{\omega},t)$. 
Consider an orthonormal frame $\{v_j\}$ on $ \partial \Pi(x_t
,E_t)$. Since $\{\partial_{\omega_i}\Gamma\}_{\{i=1,\ldots n-1\}}$ span 
 $ \partial \Pi(x_t
,E_t)$, there are $c^{ij}(t_0)$ that satisfy $$v_j=c^{ji}(t_0)\partial_{\omega_i}\Gamma.$$
 We define  $c^{ji}(t)$  for $t\leq t_0$  
 as solutions to the ODE system
\begin{align}\partial_t c^{ij}-c^{rj}a^{i}_r(\bar{\omega},t) H_s(\Gamma(\bar{\omega},t))&=0 \label{def c}\\c^{ji}(t)|_{t=t_0}=c^{ji}(t_0).\end{align}
Notice that, for technical convenience, we are taking here the backward ODE flow from time~$t_0$.
 Then for $t\leq t_0$ we define  
 \begin{equation}v_j(\bar{\omega},t)=c^{ji}(t) \partial_{\omega_i}\Gamma(\bar{\omega},t).\label{def v} \end{equation}
We note
that~$v_j(\bar{\omega},t_0)=v_j$ and $\{v_j(t)\}\subset \partial \Pi(x_t
, E_t)$, where $x_t
=\Gamma(\bar{\omega},t)$ and $ \partial \Pi(x_t
, E_t)$ is the tangent plane of $\Gamma(\bar{\omega},t)$.

From \eqref{def c} and Lemma  \ref{evolution of tangents} 
\begin{align}\partial_t v_j& =\partial_t c^{ji}(t) \partial_{\omega_i}\Gamma(\bar{\omega},t)+c^{ji}(t) \partial_t(\partial_{\omega_i}\Gamma(\bar{\omega},t)
)\notag \\ & =- ( \nabla^{\Gamma}H_s\cdot  v_j)\,  \nu_t.\label{evolution of v}\end{align}
 Moreover, 
$$ \partial_t  (v_j\cdot v_i)=- (\nabla^{\Gamma}H_s \cdot v_j )   (\nu_t \cdot v_i) - (\nabla^{\Gamma}H_s\cdot v_i )  ( v_j\cdot \nu) =0.$$ 
Hence, $\{v_j\}$ remains an orthonormal base of $\Pi(x_t
, E_t)$.

Now we define 
\begin{equation} M_{x_t
} \varphi=\varphi^iv_i,\hbox{  where }\varphi \in S^{n-2}.\label{defM}\end{equation}

In particular, if we denote $x_t
=\Gamma(\bar{\omega},t)$, from \eqref{evolution of v} we have
\begin{equation}
\partial_t  M_{x_t
} \varphi= -( \nabla^{\Gamma}H_s\cdot M_{x_t
} \varphi) \, \nu_t \label{der in t of M}.\end{equation}

We also  note that, from equation \eqref{defg} and the quadratic
separation of the smooth surfaces from their tangent planes, it follows that $h(0, \varphi)=0$.

Notice also  that by symmetry, for  $\Pi_t:=\Pi(x_t
,E_t)$ and any $R>\delta>0$
\begin{equation} \int_{C_R^{\nu_t}(x_t
)\setminus B_\delta(x_t
)}\frac{
\tilde\chi_{\Pi_t
} (y)}{|x_t
-y|^{n+s}}\,dy=0.\label{plane integral}\end{equation}

Then, parameterizing $C_R^{\nu_t}(x_t
)$ as $x_t
+\rho M_{x_t
} \varphi+\rho z\nu_t$ with $\rho\in[0,R]$,
$\varphi\in S^{n-2}$ and $z\in[-R,R]$, due to cancellations we have that
\begin{equation}\begin{split} &H_s^{\textrm{sing}}(x_t
,E)= \lim_{\delta\searrow0}  s(1-s) \int_{C_R^{\nu_t}(x_t
)\setminus B_\delta(x_t
)}\frac{
\tilde\chi_{E_t}(y)+\tilde\chi_{\Pi_t} (y)}{|x_t
-y|^{n+s}}\,dy\\&\qquad
=s(1-s)\int_{S^{n-2}} 
\left[ \int_0^R \rho^{-1-s} \left(
\int_{h(\rho,\varphi)}^0\frac{1}{(z^2+1)^{\frac{n+s}{2}}}dz\right)
\, d\rho\right]\, d\varphi, \label{eqhsing}\end{split}\end{equation}
where $\Pi_t= \Pi_t(x_t
,E_t)$. We now compute the derivatives of $h$.

\begin{proposition}\label{derivatives of h}

For a given time $t$, consider a point $x_t
=\Gamma(\bar{\omega},t)$ and $\nu_t$ the normal vector to $\Gamma$ at $x_t$.  
Let $h$ be given by \eqref{defg} where $x_t$ is fixed as above. Then denoting by $\nu$ the normal to $\Gamma(\omega,t)$, we have that

 \begin{align*} \partial_t h(\rho, \varphi)=&\frac{1}{\rho}
\Big( 
H_s(x_t
)-H_s(\Gamma) \nu\cdot\nu_t\Big) +( \nabla^\Gamma  H_s(x_t
)\cdot  M_{x_t
}\varphi)
+\frac{1}{\rho}(\nu_t\cdot  D_{\omega}\Gamma (\omega,t) \partial_t \omega)   
,\\
  \partial_{\bar{\omega}_i} h(\rho, \varphi)=&\frac{\nu_t\cdot  D_{\omega}\Gamma (\omega,t)\partial_{\bar{\omega}_i}\omega +A(M_{x_t
}\varphi,\partial_{\bar{\omega_i}}\Gamma)  }{\rho},
 \end{align*}
where $A$ denotes the second fundamental form of $\Gamma(t)$ at $x_t$ and 
\begin{eqnarray*}
\partial_t{\omega_j}&=&\Big( (g^{ij}(x_t
)+O(\rho)\Big)\,\Big(
H_s(\Gamma) (D_{\omega}\Gamma (\bar{\omega}, t))^T\nu -\rho h(\rho, \varphi)(D_{\omega}\Gamma (\bar{\omega}, t))^T\nabla^\Gamma H(x_t
)\Big)
\\&\sim & H_s(\Gamma)\Big( O(\rho)+ O(\rho^2)\Big).\end{eqnarray*}
\end{proposition}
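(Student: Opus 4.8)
The plan is to differentiate the parametrization identity~\eqref{defg} in $t$ and in the angular variables $\bar\omega_i$, keeping $\rho$ and $\varphi$ fixed, and then isolate $\partial_t h$ and $\partial_{\bar\omega_i} h$ by taking the $\nu_t$-component. Writing~\eqref{defg} as $\Gamma(\omega,t)=x_t+\rho M_{x_t}\varphi+\rho h(\rho,\varphi)\nu_t$, we note that the left-hand side depends on $t$ both through the explicit $t$-slot and through the reparametrization $\omega=\omega(\rho,\varphi,t)$ that is forced on us by requiring the right-hand side to stay on the surface: this is where the chain-rule term $D_\omega\Gamma(\omega,t)\,\partial_t\omega$ comes from. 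On the right-hand side, $x_t=\Gamma(\bar\omega,t)$ moves with normal velocity $-H_s(\Gamma(\bar\omega,t))\nu_t$ by~\eqref{simplified eq}, the frame vectors $v_i$ entering $M_{x_t}\varphi$ evolve by~\eqref{der in t of M}, i.e. $\partial_t M_{x_t}\varphi=-(\nabla^\Gamma H_s(x_t)\cdot M_{x_t}\varphi)\nu_t$, and $\rho,\varphi$ are held fixed. Also $\partial_t\nu_t$ is a purely tangential vector by~\eqref{evolutionnormal}, hence contributes nothing after we dot with $\nu_t$ (using $\nu_t\cdot\partial_t\nu_t=0$).

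Carrying this out: differentiate in $t$ to get
$D_\omega\Gamma\,\partial_t\omega + \partial_t\Gamma|_{\omega\ \mathrm{fixed}}
= \partial_t x_t + \rho\,\partial_t(M_{x_t}\varphi) + \rho\,(\partial_t h)\nu_t + \rho h\,\partial_t\nu_t$,
where by abuse $\partial_t\Gamma|_{\omega\ \mathrm{fixed}}=-H_s(\Gamma)\nu$ with $\nu$ the normal at the shifted point $\Gamma(\omega,t)$ (not at $x_t$). Now take the inner product with $\nu_t$. The $\partial_t\nu_t$ term dies; $\partial_t x_t\cdot\nu_t=-H_s(x_t)$; $\partial_t(M_{x_t}\varphi)\cdot\nu_t=-(\nabla^\Gamma H_s(x_t)\cdot M_{x_t}\varphi)$; and $-H_s(\Gamma)\nu\cdot\nu_t$ appears on the left. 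Solving for $\partial_t h$, after dividing by $\rho$, yields exactly the stated formula
$\partial_t h = \tfrac1\rho(H_s(x_t)-H_s(\Gamma)\nu\cdot\nu_t) + (\nabla^\Gamma H_s(x_t)\cdot M_{x_t}\varphi) + \tfrac1\rho(\nu_t\cdot D_\omega\Gamma(\omega,t)\partial_t\omega)$. For the spatial derivative, differentiate~\eqref{defg} in $\bar\omega_i$ with $\rho,\varphi,t$ fixed, noting $x_t$ and $M_{x_t}\varphi$ and $\nu_t$ all move only through $\bar\omega$; dotting with $\nu_t$ and using that $\partial_{\bar\omega_i}\nu_t\cdot\nu_t=0$, and that $\nu_t\cdot\partial_{\bar\omega_i}(\rho M_{x_t}\varphi)$ produces the second fundamental form term $A(M_{x_t}\varphi,\partial_{\bar\omega_i}\Gamma)$ via $a_{ij}=\partial_{\omega_i}\nu\cdot\partial_{\omega_j}\Gamma$ (equivalently $-\nu\cdot\partial_{\omega_i}\partial_{\omega_j}\Gamma$), gives the formula for $\partial_{\bar\omega_i} h$.

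It remains to compute $\partial_t\omega_j$, which is determined by the \emph{tangential} (not normal) content of the same differentiated identity: projecting onto the tangent space $T_{x_t}\Gamma=\mathrm{span}\{\partial_{\omega_i}\Gamma\}$ and using the metric $g_{ij}$ to invert. Concretely, taking the component along $\partial_{\omega_j}\Gamma$ in $D_\omega\Gamma\,\partial_t\omega = \partial_t x_t + \rho\,\partial_t(M_{x_t}\varphi) + \rho(\partial_t h)\nu_t + \rho h\,\partial_t\nu_t - \partial_t\Gamma|_{\omega\ \mathrm{fixed}}$: the term $\partial_t x_t=-H_s(\Gamma(\bar\omega))\nu_t$ is normal at $x_t$ and contributes nothing there, but $\partial_t\Gamma|_{\omega\ \mathrm{fixed}}=-H_s(\Gamma)\nu$ is normal at the nearby point $\Gamma(\omega,t)$, whose $T_{x_t}\Gamma$-projection is $O(\rho)$ times $H_s(\Gamma)$ (since $\nu-\nu_t=O(\rho)$ and $h=O(\rho)$), and $\rho h\,\partial_t\nu_t$ is tangential of size $O(\rho^2)H_s$; $\rho\,\partial_t(M_{x_t}\varphi)$ is normal so drops out. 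Collecting, $\partial_t\omega_j = (g^{ij}(x_t)+O(\rho))\big(H_s(\Gamma)(D_\omega\Gamma(\bar\omega,t))^T\nu - \rho h(D_\omega\Gamma(\bar\omega,t))^T\nabla^\Gamma H(x_t)\big)\sim H_s(\Gamma)(O(\rho)+O(\rho^2))$, using $h(0,\varphi)=0$ so $h=O(\rho)$, and $(D_\omega\Gamma(\bar\omega))^T\nu=(D_\omega\Gamma(\bar\omega))^T(\nu-\nu_t)=O(\rho)$ since $\nu_t\perp T_{x_t}\Gamma$. The main obstacle is bookkeeping the two different base points — $x_t=\Gamma(\bar\omega,t)$ versus the moving point $\Gamma(\omega,t)$ with $\omega=\omega(\rho,\varphi,t)$ — and tracking the $O(\rho)$ discrepancies between $\nu$ and $\nu_t$; once that is set up carefully, every term is produced by a direct application of~\eqref{simplified eq}, \eqref{evolutionnormal}, \eqref{der in t of M} and the definition of $a_{ij}$.
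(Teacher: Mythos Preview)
Your proposal is correct and follows essentially the same route as the paper: differentiate the defining identity~\eqref{defg} in $t$ (resp.\ in $\bar\omega_i$), then take the $\nu_t$-component to isolate $\partial_t h$ (resp.\ $\partial_{\bar\omega_i}h$), and take the tangential component---equivalently, multiply by $(D_\omega\Gamma(\bar\omega,t))^T$---to extract $\partial_t\omega$, using \eqref{simplified eq}, \eqref{evolutionnormal} and \eqref{der in t of M} exactly as you indicate. The paper in fact only writes out the $t$-derivative computation in detail; your sketch of the $\partial_{\bar\omega_i}h$ formula via the same mechanism (and the identification of $\nu_t\cdot\partial_{\bar\omega_i}(M_{x_t}\varphi)$ with a second-fundamental-form term through $a_{ij}=\partial_{\omega_i}\nu\cdot\partial_{\omega_j}\Gamma$) is a welcome addition.
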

 \begin{proof}
 
 First, we note that from \eqref{defg}, $\omega$ becomes implicitly a function of $\varphi$ and $\rho$, but also of $x_t
$, hence it does depend implicitly on $t$. Hence,
taking derivatives on equation \eqref{defg} we have
 \begin{align}  D_{\omega}\Gamma (\omega,t)\partial_t{\omega}+\partial_t\Gamma =&\partial_t x_t
+\rho \partial_tM_{x_t
} \varphi+
 \rho \partial_t h(\rho, \varphi)\nu_t +\rho h(\rho, \varphi)\partial_t \nu_t.\label{dert} \end{align}
 
Note that
\begin{equation}\label{LKy67:1}
\partial_t \Gamma\cdot\nu_t =- H_s(\Gamma)\,\nu\cdot\nu_t\qquad{\mbox{ and }}\qquad
\partial_t x_t
\cdot\nu_t=-H_s(x_t
)\,\nu_t\cdot\nu_t=
-H_s(x_t
).\end{equation}
Moreover, since~$M_{x_t
}\varphi$ is a tangential vector at $x_t
$,
we have that~$M_{x_t
}\varphi\cdot\nu_t=0$, thus
\begin{equation}\label{LKy67:2}
-\partial_t M_{x_t
}\varphi\cdot\nu_t=
M_{x_t
}\varphi\cdot \partial_t \nu_t=
M_{x_t
}\varphi \cdot \,\nabla^\Gamma H_s(x_t
),\end{equation}
where the latter identity follows from~\eqref{evolutionnormal}.
Then, 
 using \eqref{simplified eq} and taking dot product with $\nu_t$ (recall also that~$\partial_t\nu_t\cdot\nu_t$),
we have 
  \begin{align*} \partial_t h(\rho, \varphi)=&\frac{1}{\rho}\Big(
H_s(x_t
)-H_s(\Gamma) \nu\cdot\nu_t\Big) + \nabla^\Gamma H_s(x_t
) \cdot  
M_{x_t
}\varphi
+\frac{1}{\rho}
 \nu_t \cdot D_{\omega}\Gamma (\omega,t) \partial_t \omega 
.  \end{align*}
  Now we are left to compute $\partial_t \omega $. To this end, 
we multiply equation \eqref{dert}  by~$D_{\omega}\Gamma (\bar{\omega}, t))^T$,
we exploit~\eqref{LKy67:1} and~\eqref{LKy67:2}
and we obtain 
  $$(D_{\omega}\Gamma (\bar{\omega}, t))^T D_{\omega}\Gamma (\omega,t))\partial_t{\omega}-H_s(\Gamma) D_{\omega}\Gamma (\bar{\omega}, t))^T\nu = \rho h(\rho, \varphi)(D_{\omega}\Gamma (\bar{\omega}, t))^T\nabla^\Gamma H(x_t
).$$
  Since $(D_{\omega}\Gamma (\bar{\omega}, t))^T D_{\omega}\Gamma (\bar{\omega},t)=(g_{ij}(x_t
))$, we have that the first matrix is $(g_{ij}(x_t
)+ O(\rho))$. Similarly, since 
  $D_{\omega}\Gamma (\bar{\omega}, t))^T\nu_t=0$, the second term is like $H_s(\Gamma) O(\rho)$. Hence
  \begin{align*}
  \partial_t{\omega}=&\Big(g^{ij}(x_t
)+O(\rho)\Big)\,\Big(
H_s(\Gamma) (D_{\omega}\Gamma (\bar{\omega}, t))^T\nu +\rho h(\rho, \varphi)(D_{\omega}\Gamma (\bar{\omega}, t))^T\nabla^\Gamma H(x_t
)\Big)\\
  \sim& \, H_s(\Gamma)\Big( O(\rho)+ O(\rho^2)\Big),
  \end{align*}
as desired.\end{proof}
We will also use 
a rotation that aligns the cylinder $C_R^{\nu_t}(x_t)$ with $C_R^{\nu_\tau}(x_\tau)$. 
We remark that since the vectors $\{v_i(t):\, i\ldots n-1\} \cup \{\nu_t\}$ are an orthonormal basis of $\rr^n$ we may  define for $y=y^i v_i(t)+y^n \nu_t$ the following rotation
\begin{equation}
\calR_{t,\tau} y=y^i v_i(\tau)+y^n \nu_\tau.\label{rotation}\end{equation}
Notice that 
\begin{equation}\label{DEsR97}
y^i=y\cdot v_i(t)\quad{\mbox{ and }}\quad
y^n=y\cdot \nu_t.\end{equation}
Then it is direct to show that
\begin{proposition} \label{derivative of rotation}
Consider $\calR_{t,\tau}$ given by \eqref{rotation} and denote  $\nabla^{\Gamma} H_s(\tau)$ the tangential  gradient of $H_s(x_\tau)$. Then it holds that
\begin{enumerate}
\item $\calR_{\tau,\tau}=\textrm{Id}$.
\item $\partial_{\tau_2} \calR_{\tau_1,\tau_2} y=\left.\left[ (y\cdot v_i(\tau_1)) \partial_t v_i(t)+(y\cdot \nu_{\tau_1}) \partial_t \nu_t\right)\right|_{t=\tau_2}=-(y\cdot v_i(\tau_1) )( v_i(\tau_2)\cdot \nabla^{\Gamma} H_s(\tau_2)) \,\nu_{\tau_2}+(y\cdot \nu_{\tau_1})  \nabla^{\Gamma} H_s(\tau_2) $.
\end{enumerate}
\end{proposition}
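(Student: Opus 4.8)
The plan is to verify both assertions of Proposition~\ref{derivative of rotation} by direct computation from the definition \eqref{rotation} of the rotation $\calR_{t,\tau}$, using the evolution equation \eqref{evolution of v} for the frame vectors $v_i$ and the evolution equation \eqref{evolutionnormal} (equivalently \eqref{der in t of M}) for the normal. The point is that the map $\calR_{t,\tau}$ is described by \emph{fixed} coefficients: writing $y = y^i v_i(t) + y^n\nu_t$ we have $y^i = y\cdot v_i(t)$ and $y^n = y\cdot\nu_t$, and then $\calR_{t,\tau}y := y^i v_i(\tau) + y^n\nu_\tau$. Thus, once the ``input'' time $t=\tau_1$ is frozen, the coefficients $y\cdot v_i(\tau_1)$ and $y\cdot\nu_{\tau_1}$ are constants, and differentiating in the ``output'' time $\tau_2$ only hits the vectors $v_i(\tau_2)$ and $\nu_{\tau_2}$.

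For item (1) I would simply observe that when $\tau_1 = \tau_2 = \tau$ the definition gives $\calR_{\tau,\tau}y = (y\cdot v_i(\tau))\,v_i(\tau) + (y\cdot\nu_\tau)\,\nu_\tau$, which is precisely the expansion of $y$ in the orthonormal basis $\{v_1(\tau),\dots,v_{n-1}(\tau),\nu_\tau\}$ of $\R^n$; hence $\calR_{\tau,\tau} = \mathrm{Id}$. For item (2) I would write
\begin{equation*}
\partial_{\tau_2}\calR_{\tau_1,\tau_2}y
= (y\cdot v_i(\tau_1))\,\big[\partial_t v_i(t)\big]_{t=\tau_2}
+ (y\cdot\nu_{\tau_1})\,\big[\partial_t\nu_t\big]_{t=\tau_2},
\end{equation*}
which is the first claimed equality. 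Then I would substitute the two evolution formulas: from \eqref{evolution of v}, $\partial_t v_i(t) = -(\nabla^\Gamma H_s \cdot v_i(t))\,\nu_t$, and from \eqref{evolutionnormal}, $\partial_t\nu_t = \nabla^\Gamma H_s(x_t)$. Evaluating both at $t=\tau_2$ and collecting terms yields
\begin{equation*}
\partial_{\tau_2}\calR_{\tau_1,\tau_2}y
= -\,(y\cdot v_i(\tau_1))\,\big(v_i(\tau_2)\cdot\nabla^\Gamma H_s(\tau_2)\big)\,\nu_{\tau_2}
+ (y\cdot\nu_{\tau_1})\,\nabla^\Gamma H_s(\tau_2),
\end{equation*}
which is the second claimed equality.

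There is essentially no genuine obstacle here: the statement is called ``direct'' in the text precisely because it is a one-line consequence of the bilinear structure of $\calR_{t,\tau}$ together with the already-established evolution laws for $v_i$ and $\nu_t$. The only point requiring a little care is the bookkeeping of the two time variables — one must keep $\tau_1$ frozen while differentiating in $\tau_2$, so that the coefficients $y\cdot v_i(\tau_1)$, $y\cdot\nu_{\tau_1}$ pass outside the derivative — and the harmless abuse of notation whereby $\partial_t v_i(t)$ refers to the flow of the frame constructed in \eqref{def v}, evaluated along the reparameterized surface satisfying \eqref{simplified eq}. I would also note in passing that $\partial_t\nu_t\cdot\nu_t = 0$ and $\partial_t v_i(t)\cdot v_i(t)=0$, so the expression on the right-hand side is consistent with $\calR_{\tau_1,\tau_2}$ remaining a rotation, although this consistency check is not strictly needed for the proof.
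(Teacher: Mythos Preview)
Your proof is correct and matches the paper's intended argument: the paper does not spell out a proof but simply prefaces the proposition with ``it is direct to show that'', and your direct computation from the definition \eqref{rotation} together with the evolution laws \eqref{evolution of v} and \eqref{evolutionnormal} is exactly what is meant.
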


\begin{proof} For the sake of completeness, we give a proof of the second claim.
Using~\eqref{rotation} and~\eqref{DEsR97}, we have that
\begin{equation}\label{78:rtdsjky36g82}
\begin{split}
\partial_{\tau}
\calR_{t,\tau} y\,
& =y^i \partial_{\tau}v_i(\tau)+y^n \partial_{\tau}\nu_\tau\\
& =(y\cdot v_i(t)) \partial_{\tau}v_i(\tau)+(y\cdot \nu_t) 
\partial_{\tau}\nu_\tau
,\end{split}\end{equation}
which is one of the desired results.
In addition, from~\eqref{evolutionnormal}
we know that~$\partial_\tau \nu_\tau=\nabla^{\Gamma} H_s(\tau)$
and from~\eqref{evolution of v} that
$\partial_\tau v_i(\tau)
=- ( \nabla^{\Gamma}H_s(\tau)\cdot  v_i(\tau))\,  \nu_\tau$.
Hence we insert these pieces of information in~\eqref{78:rtdsjky36g82}
and we conclude that
$$ \partial_{\tau}
\calR_{t,\tau} y
=-(y\cdot v_i(t)) 
(v_i(\tau)\cdot \nabla^{\Gamma}H_s(\tau))\,  \nu_\tau
+(y\cdot \nu_t) \nabla^{\Gamma} H_s(\tau),$$
as desired.
\end{proof}

Now we study the evolution of the $s$-perimeter~$P_s$
and of the $s$-mean curvature.

\begin{theorem}\label{derivatives non local quantities}
Let~$x=x_t$, $\nu=\nu_t$  and $h$ be as in~\eqref{defg}.
We have the following equations:
\begin{eqnarray}
\label{F1} && \partial_t P_s(E_t)=
-\int_{\partial E_t} H_s^2(y)\,d{\mathcal{H}}^{n-1}(y)
\leq 0,
\\ \label{F4} && \frac{\nabla^{\Gamma}_{i} H_s}{s(1-s)}(x)= (n+s )
g^{ij}\left( \textrm{P.V.}\int_{\R^n}\tilde\chi_{E_t} \,\frac{(y-x)\cdot \partial_{\omega_j} x}{
|x-y|^{n+s+2}}dx\right),
\\ 
\label{F5} {\mbox{and }}&&\frac{ \partial_t H_s}{2s(1-s)}(x)
=\textrm{ P.V.} \int_{\partial E_t} \frac{(\partial _t x
-\partial_t y)\cdot\nu(y)}{|x
-y|^{n+s}}dy\\
&&\qquad=
\textrm{P.V.}\int_{\partial E_t}
 \frac{H_s(y)-H_s(x)
}{|x
-y|^{n+s}}dy+ H_s(x) \textrm{P.V.} \int_{\partial E_t}
 \frac{1-\nu(x)\cdot \nu(y)
}{|x
-y|^{n+s}}dy.\nonumber
\end{eqnarray}

Also,
\begin{eqnarray}
\label{F6} && {\mbox{the function }}\;
(0,R)\times S^{n-2}\ni(\rho,\varphi)\mapsto
\frac{ 
\rho^{-1-s}\,\partial_t h(\rho,\varphi)}{
(1+h^2(\rho,\varphi))^{\frac{n+s}{2}} }
{\mbox{ is integrable, }}\\
&& \qquad \partial_t( H_s^{\textrm{sing}})=O(R^{1-s}) \hbox{ and }\nonumber \\
&&\int_{S^{n-1}}\int_{-1}^1( \chi_{E_t}+\chi_{\Pi_t})\big( x_t+RM_{x_t}\omega +Rz\nu_t(x_t)\big)
\frac{z M_{x_t}\omega\cdot\nabla^\Gamma H_s(x_t)}{(1+z^2)^{\frac{n+s}{2}}} dz d\omega=O(R). \label{limit on the cylinder}
\end{eqnarray} 
\end{theorem}

\begin{proof}
In the course of the proof, we will also establish the auxiliary formulas
\begin{eqnarray}
\label{F2} &&\frac{ \partial_t( H_s^{\textrm{sing}})(x_t)}{s(1-s)}
= -\int_{S^{n-2}}\left[ \int_0^R 
\frac{ 
\rho^{-1-s}\,\partial_t h(\rho,\varphi)
}{ (1+h^2(\rho,\varphi))^{ \frac{n+s}{2} } }
\; d\rho
\right] 
\; d\varphi,\\
\label{F3} && \frac{\partial_t( H_s^{\textrm{reg}})(x_t)}{s(1-s)}= 2\int_{(\partial E_t)\setminus C_R^{\nu_t}(x_t
)}
 \frac{(\partial _t x_t
-\partial_t y
+ (y-x_t)\cdot \nabla^\Gamma H_s \, \nu_t
-(y-x)\cdot \nu_t \,
\nabla^\Gamma H_s(x_t)) \cdot\nu}{|x_t
-y|^{n+s}}dy
\\
\notag &&= 2\int_{(\partial E_t)\setminus C_R^{\nu_t}(x_t
)}
 \frac{(\partial _t x_t
-\partial_t y )\cdot\nu}{|x_t
-y|^{n+s}}dy\\
\notag &&\qquad +R^{-s}\int_{S^{n-1}}\int_{-1}^1( \chi_{E_t}+\chi_{\Pi_t})\big( x_t+RM_{x_t}\omega +Rz\nu_t(x_t)\big)
\frac{z M_{x_t}\omega\cdot\nabla^\Gamma H_s(x_t)}{(1+z^2)^{\frac{n+s}{2}}} dz d\omega,
 \\
 \notag  && \mbox{ where $\nu$ is the unit normal vector to } \partial E_t \hbox{ at } y \hbox{ and }\Pi_t \hbox{ is defined as in \eqref{defPi}}
\end{eqnarray}

Formula~\eqref{F1} follows from Theorem 6.1 in \cite{FFMMM}
and~\eqref{F2} from \eqref{eqhsing}.

To compute the derivative of the regular part
we need to compute
$$\lim_{h\to 0}  \frac{H_s^{\textrm{reg}}(x_t
(t), E_{t})-H_s^{\textrm{reg}}(x_t
(t-h), E_{t-h})}{h}=\frac{1}{h}\left(\int_{\rr^n\setminus C_R^{\nu_t}(x_t)}\frac{
\tilde{\chi}_{E_t}(y)}{|x_{t}-y|^{n+s}}
- \int_{\R^n\setminus C_R^{\nu_{t-h}}(x_{t-h})}\frac{
\tilde{\chi}_{E_{t-h}}(y)}{|x_{t-h}-y|^{n+s}}\right).$$
We divide the computation as follows:
\begin{align*}I_h=&\frac{1}{h}\left(\int_{\rr^n\setminus C_R^{\nu_t}(x_t)}\frac{
\tilde{\chi}_{E_t}(y)}{|x_{t}-y|^{n+s}}
- \int_{\R^n\setminus C_R^{\nu_{t-h}}(x_{t-h})}\frac{
\tilde{\chi}_{E_{t}}(y)}{|x_{t-h}-y|^{n+s}}\right)\hbox{  and }\\ 
II_h=&\frac{1}{h}\left(\int_{\rr^n\setminus C_R^{\nu_{t-h}}(x_{t-h})}\frac{
\tilde{\chi}_{E_t}(y)}{|x_{t-h}-y|^{n+s}}
- \int_{\R^n\setminus C_R^{\nu_{t-h}}(x_{t-h})}\frac{
\tilde{\chi}_{E_{t-h}}(y)}{|x_{t-h}-y|^{n+s}}\right).\end{align*}

 For the first integral  we consider
  a function $\phi^\epsilon\in C^{\infty}_0$ that
approximates $\tilde \chi_{E_t}$. Then,
\begin{equation}\label{KJ:lk:0}
I_h=\lim_{\epsilon\to0}
I_h^\epsilon,\end{equation}
with
\begin{equation}\label{KJ:lk}\begin{split}
I_h^\epsilon:=&\frac{1}{h}\left( \int_{\R^n\setminus C_R^{\nu_t}(x_t)}\frac{
\phi^\epsilon(y)}{|x_{t}-y|^{n+s}}
- \int_{\R^n\setminus C_R^{\nu_{t-h}}(x_{t-h})}\frac{
\phi^\epsilon(y)}{|x_{t-h}-y|^{n+s}}\right)\\
=&\frac{1}{h} \int_{\R^n\setminus C_R^{\nu_t}(0)}\frac{
\phi^\epsilon(y+x_t)-\phi^\epsilon(\calR_{t, t-h}y+x_{t-h})}{|y|^{n+s}}dy\\
=&
\int_{\R^n\setminus C_R^{\nu_t}(0)} \left[ \int_0^1 \frac{ \nabla\phi^\epsilon
\big(y_{h,l}\big)\cdot \delta_h
}{|y|^{n+s}}\,d\ell
\right]\,dy,
\end{split}\end{equation}
where
\begin{equation}\label{LKJH:A90}\begin{split}&
\delta_h:=\frac{ x_t-x_{t-h}+\partial_{\ell} \calR_{t, t-(1-\ell)h} y}{h},
  \\ &\calR_{t,\tau} \hbox{ is given by \eqref{rotation}} \\
&\qquad{\mbox{ and }} y_{h,l}=\calR_{t,t-(1-\ell)h} y+x_{t-h}+\ell(x_t-x_{t-h}).\end{split}\end{equation}
{F}rom Proposition \ref{derivative of rotation} we have $\partial_{\ell} \calR_{t,t-(1-\ell)h}y=h\left.\left[ (y\cdot v_i(t)) \partial_\tau v_i(\tau)+( y\cdot \nu_t) \partial_\tau \nu_\tau\right]\right|_{\tau=t-(1-\ell)h}$

Moreover, if we denote by $\calR^{-T}_{t-(1-\ell)h,t}$ the inverse of the transpose of $\calR_{t,t-(1-\ell)h}$ we have
\begin{eqnarray*} &&\textrm{div}_y \left( \frac{
\phi^\epsilon
( y_{h,l}) \calR^{-T}_{t-(1-\ell)h,t}\delta_h}{|y|^{n+s}}\right)\\
&=&
\frac{
\calR_{t,t-(1-\ell)h}\nabla\phi^\epsilon
\big(y_{h,l}\big)\cdot R^{-T}_{t-(1-\ell)h,t}\delta_h}{|y|^{n+s}}
+ \phi^\epsilon
\big( y_{h,l}\big)\,
\textrm{div}_y
\frac{\delta_h}{|y|^{n+s}}
\\
&=&
\frac{
\nabla\phi^\epsilon
\big(y_{h,l}\big)\cdot\delta_h}{|y|^{n+s}}
+ \phi^\epsilon
\big( y_{h,l}\big)\,
\textrm{div}_y\left(
\frac{\delta_h}{|y|^{n+s}}\right)
,\end{eqnarray*}
and so the divergence theorem gives that
\begin{eqnarray*}
&& 
\int_{\partial  C_R^{\nu_t}(0)}
\frac{\phi^\epsilon
( y_{h,l}) \calR^{-T}_{t-(1-\ell)h,t}\delta_h}{|y|^{n+s}}\cdot\nu_{ C_R^{\nu_t}(0)}
\,d{\mathcal{H}}^{n-1}(y)\\
&=&
\int_{\R^n\setminus  C_R^{\nu_t}(0)}
\frac{
\nabla\phi^\epsilon
\big(y_{h,l}\big)\cdot\delta_h}{|y|^{n+s}}\, dy
+\int_{\R^n\setminus  C_R^{\nu_t}(0)} \phi^\epsilon
\big( y_{h,l}\big)\,
\textrm{div}_y\left(
\frac{\delta_h}{|y|^{n+s}}\right)\,dy.
\end{eqnarray*}
We insert this information into~\eqref{KJ:lk}
and we obtain that
\begin{eqnarray*}
I_h^\epsilon &=&-
 \int_0^1
\left[
\int_{\R^n\setminus  C_R^{\nu_t}(0)} \phi^\epsilon
\big( y_{h,l}\big)\,
\textrm{div}_y\left(
\frac{\delta_h}{|y|^{n+s}}\right)\,dy.
\right]\,d\ell\\
\\&&
+\int_0^1\left[
\int_{\partial  C_R^{\nu_t}(0)}
\frac{\phi^\epsilon
( y_{h,l}) \calR^{-T}_{t-(1-\ell)h,t}\delta_h}{|y|^{n+s}}\cdot\nu_{ C_R^{\nu_t}(0)}
\,d{\mathcal{H}}^{n-1}(y))\right]\,d\ell.
\end{eqnarray*}
Thus, by~\eqref{KJ:lk:0},
\begin{equation}\label{1313}\begin{split}
I_h\;&= -\int_0^1
\left[
\int_{\R^n\setminus  C_R^{\nu_t}(0)} \chi_{E_t}
\big( y_{h,l}\big)\,
\textrm{div}_y
\left(\frac{\delta_h}{|y|^{n+s}}\right)\,dy.
\right]\,d\ell\\
&
+\int_0^1\left[
\int_{\partial  C_R^{\nu_t}(0)}
\frac{\chi_{E_t}
( y_{h,l}) \calR^{-T}_{t-(1-\ell)h,t}\delta_h}{|y|^{n+s}}\cdot\nu_{ C_R^{\nu_t}(0)}
\,d{\mathcal{H}}^{n-1}(y))\right]\,d\ell,
\end{split}\end{equation}
where  $\nu_{ C_R^{\nu_t}(0)}$ is the unit normal to the cylinder at $y$.
Now we observe that
$$
\chi_{E_t}\big(\calR_{t, t-(1-l)h} y+x_{t-h}+\ell(x_t-x_{t-h})\big)
-
\chi_{E_t}\big( y+x_{t-h}\big)=
\chi_{E_t}\big( y+x_{t-h}+O(h)\big)
-
\chi_{E_t}\big( y+x_{t-h}\big),$$
so this function is supported in a neighborhood of
size~$O(h)$ of a smooth surface. This fact, \eqref{1313}
and the integrability
of the kernel~$|y|^{-n-s}$ at infinity give that
\begin{eqnarray*}
I_h&=&
-\int_0^1
\left[
\int_{\R^n\setminus  C_R^{\nu_t}(0)} \chi_{E_t}
\big( y+x_{t-h}\big)\,
\textrm{div}_y
\left(\frac{\delta_h}{|y|^{n+s}}\right)\,dy.
\right]\,d\ell\\
\\&&
+\int_0^1\left[
\int_{\partial  C_R^{\nu_t}(0)}
\frac{\chi_{E_t}
( y+x_{t-h}) \calR^{-T}_{t-(1-\ell)h,t}\delta_h}{|y|^{n+s}}\cdot\nu_{ C_R^{\nu_t}(0)}
\,d{\mathcal{H}}^{n-1}(y))\right]\,d\ell+o(1),
\end{eqnarray*}
as~$h\to0$. 
Recalling~\eqref{LKJH:A90} and Proposition \ref{derivative of rotation}, we have  for $\tau=t-(1-l)h$ that

\begin{align*}\textrm{div}_y
\left(\frac{\delta_h}{|y|^{n+s}}\right)=&\frac{v_i(t)\cdot\partial_\tau v_i(\tau)+\nu_t
\cdot \partial_\tau \nu_\tau}{|y|^{n+s+2}} -(n+s) \frac{ y\cdot(x_t-x_{t-h}+\partial_{\ell} \calR_{t, t-(1-\ell)h} y)}{|y|^{n+s+2}h}\\
\to&
 -(n+s) \frac{ y\cdot \left(\partial_t x_t+( y\cdot v_i(t)) \, \partial_t v_i(t)+ (y\cdot \nu_t)  \partial_t \nu_t
\right)}{|y|^{n+s+2}}\hbox{ as $h\to 0$,  }\\
\hbox{ and }\calR^{-T}_{t-(1-\ell)h,t}\ \delta_h=&\calR^{-T}_{t-(1-\ell)h,t}\frac{ x_t-x_{t-h}+\partial_{\ell} \calR_{t, t-(1-\ell)h} y}{h} \\
\to& \partial_t x_t+ (y\cdot v_i(t))\,  \partial_t v_i(t)+( y\cdot \nu_t ) \,  \partial_t \nu_t
 \hbox{ as }h\to 0
.\end{align*}
Additionally,  from \eqref{evolution of v} and \eqref{evolutionnormal} we have that
\begin{align*}y\cdot[ (y\cdot v_i(t))\,  \partial_t v_i(t)+  (y,\cdot \nu_t ) \partial_t \nu_t
]=&
-( y\cdot v_i) ( \nabla^\Gamma H_s\cdot  v_i (t)) \, (y\cdot \nu_t)   +  (y\cdot  \nu_t)  ( y\cdot   \nabla^\Gamma H_s) \\ =&- (\nabla^\Gamma H_s \cdot  y^T) \, (  y\cdot  \nu_t) +( y\cdot \nu) ( y\cdot   \nabla^\Gamma H_s) \\=&0.\end{align*}
Hence,
\begin{align}
\label{LKG67lk}
\lim_{h\to0} 
I_h=(n+s) & \int_{\R^n\setminus  C_R^{\nu_t}(0)}
\tilde \chi_{E_t}\big( y+x_{t}\big)\frac{\,y
\cdot \partial_t x}{|y|^{n+s+2}}\,dy  \\ \notag
&+\int_{\partial  C_R^{\nu_t}(0)}
\tilde \chi_{E_t}\big( y+x_{t}\big)\frac{\,\left( \partial_t x_t+(y\cdot v_i(t))\,  \partial_t v_i(t)+  (y,\cdot \nu_t ) \partial_t \nu_t
\right)\cdot\nu_{
 C_R^{\nu_t}(0)}}{|y|^{n+s}}
\,d{\mathcal{H}}^{n-1}(y).
\end{align}

Now we
notice that
$$-(n+s) \frac{ y\cdot \partial_t x }{|y|^{n+s+2}}=
\textrm{div}_y
\left(\frac{ \partial_t x}{|y|^{n+s}}\right).$$
Then   using the divergence theorem we have
\begin{align*} (n+s)\int_{\R^n\setminus  C_R^{\nu_t}(0)}&
\tilde \chi_{E_t}\big( y+x_{t}\big)\frac{\,y
\cdot \partial_t x}{|y|^{n+s+2}}\,dy=\\
&\int_{E_t\setminus  C_R^{\nu_t}(x_t)}\textrm{div}_y
\left(\frac{ \partial_t x}{|y-x_t|^{n+s}}\right)dy -
\int_{E_t^c\setminus  C_R^{\nu_t}(x_t)}\textrm{div}_y
\left(\frac{ \partial_t x}{|y-x_t|^{n+s}}\right)dy\\
=&2\int_{\partial E_t\setminus  C_R^{\nu_t}(x_t)} \frac{ \nu_{\partial E_t}(y)\cdot\partial_t x}{|y-x_t|^{n+s}}\,d{\mathcal{H}}^{n-1}(y)-\int_{\partial  C_R^{\nu_t}(x_t)}  \chi_{E_t}\big( y\big)\frac{ \nu_{\partial C_R^{\nu_t}}(y)\cdot\partial_t x}{|y-x_t|^{n+s}}\,d{\mathcal{H}}^{n-1}(y),\end{align*}
where $\nu_{\partial E_t}(y)$ denotes the unit normal to $\partial E_t$ at $y$.

Plugging this into~\eqref{LKG67lk}  we obtain
\begin{align}\label{JK:AK78KK}
\lim_{h\to0}
I_h=& 
2\int_{\partial E_t\setminus  C_R^{\nu_t}(x_t)} \frac{ \nu_{\partial E_t}(y)\cdot\partial_t x}{|y-x_t|^{n+s}}\,d{\mathcal{H}}^{n-1}(y)\\ \notag & -\int_{\partial  C_R^{\nu_t}(x_t)}  \chi_{E_t}\big( y\big)\frac{ \nu_{\partial C_R^{\nu_t}}(y)\cdot\left((y-x_t)^i \partial_t v_i(t)+(y-x)^n\partial_t \nu(x)
\right)}{|y-x_t|^{n+s}}\,d{\mathcal{H}}^{n-1}(y).\end{align}

Now we
notice that,
from the definition of $C_R(0)$, the normal~$\nu_{C_R(0)}$ is either
on the tangent plane at $x_t$ (for the sides of the cylinder) or
it is parallel to the normal at $x_t$ (at the top and
the bottom of the cylinder).
Hence, at the top and bottom of the cylinder we have~
$\pm \nu_{\partial C_R^{\nu_t}}(y)\cdot \partial_t v_i(t)=- \nabla^{\Gamma} H_s\cdot  v_i(t) $  and
$\nu_{\partial C_R^{\nu_t}}(y)\cdot \partial_t \nu_t=0$, while
along the sides of the cylinder $\nu_{\partial C_R^{\nu_t}}(y)\cdot \partial_t v_i(t)=0$  and
$\nu_{\partial C_R^{\nu_t}}(y)\cdot \partial_t \nu_t=\frac{(y-x_t)^T}{|(y-x_t)^T|}\cdot \nabla^{\Gamma} H_s$.
In addition, $\tilde \chi_{E_t}=-1$ on the bottom
of the cylinder and~$\tilde \chi_{E_t}=1$ on the top.
As a consequence,
\begin{align*}
\int_{\partial  C_R^{\nu_t}(x_t)}  & \chi_{E_t}\big( y\big)\frac{ \nu_{\partial C_R^{\nu_t}}(y)\cdot\left((y-x_t)^i \partial_t v_i(t)+(y-x)^n\partial_t \nu_t\right)}{|y-x_t|^{n+s}}\,d{\mathcal{H}}^{n-1}(y)
=\\ &-2 \int_{S^{n-2}}\int_0^1R^{n-n-s}\frac{\rho^{n-2}   \nabla^{\Gamma} H_s(t)\cdot  \omega }{(\rho^2 + 1)^{\frac{n+s}2} }\,d\rho\, d\omega\\
&+\int_{S^{n-2}}\int_{-1}^1 \chi_{E_t}\big( x_t+RM_{x_t}\omega +Rz\nu_t\big)
R^{n-n-s}\frac{z M_{x_t}\omega\cdot\nabla^\Gamma H_s(x_t)}{(1+z^2)^{
\frac{n+s}{2}}} dz d\omega .\end{align*}
By symmetry the first term is 0 and 
$$\int_{S^{n-2}}\int_{-1}^1 \chi_{\Pi_t}\big( x_t+RM_{x_t}\omega +Rz\nu_t(x_t)\big)
R^{n-n-s}\frac{z M_{x_t}\omega\cdot\nabla^\Gamma H_s(x_t)}{(1+z^2)^{\frac{n+s}{2}}} dz d\omega=0.$$
we obtain the first equality of  \eqref{F3}.

The second equality  may be obtained observing that 
$$-(n+s) \frac{ y\cdot\left( \partial_t x + y\cdot v_i(t)  \partial_t v_i(t)+ y\cdot \nu_t  \partial_t \nu_t\right)}{|y|^{n+s+2}}=
\textrm{div}_y
\left(\frac{ \partial_t x+(y-x_t)^i \partial_t v_i(t)+(y-x)^n\partial_t \nu_t}{|y|^{n+s}}\right).$$
For the integral defining~$II_h$, we have
$$II_h=\frac{1}{h}\int_{\rr^n\setminus C_R(0)}\frac{\tilde{\chi}_{E_t}(y+x_{t-h})-\tilde{\chi}_{E_{t-h}}(y+x_{t-h})}{|y|^{n+s}}dy.$$

Notice that the integrand is not 0 for $y+x_{t+h}\in E_t\Delta E_{t-h}$. Since we assume that  $\partial E_t$ is smooth, we may
 parameterize this neighborhood as $y=y_t+ z \nu_{\partial E_t} (y_t)$ where $y_t\in \partial E_t$. Since we assume that the sets $E_t$ are continuous in $t$, for $h$ small enough,   $E_t\Delta E_{t-h}$ is contained in this tubular neighborhood. Moreover, a Taylor expansion in $t$ yields that $$y_{t-h}=y_t-h\partial_t y_{t} +O(h^2) \hbox{ and } (y_{t-h}-y_t)\cdot \nu_{\partial E_t}(y)=-h\partial_t y_{t} \cdot \nu_{\partial E_t}(y)+O(h^2).$$
Then we have
\begin{align*}
II_h=&\frac{1}{h}\int_{\partial E_t\setminus C_R(0)}\int_{0}^{-h\partial_t y_{t} \cdot \nu_{\partial E_t}(y)+O(h^2)}\frac{2}{|y-x_{t-h}|^{n+s}} \,dz 
\,d{\mathcal{H}}^{n-1}(y) \\
&\to -2 \int_{\partial E_t\setminus C_R(0)}\frac{\partial_t y_{t} \cdot \nu_{\partial E_t}(y)}{|y-x_{t-h}|^{n+s}}  \,d{\mathcal{H}}^{n-1}(y)
\qquad\hbox{ as }h\to 0
\end{align*}
This, together with~\eqref{JK:AK78KK}, proves~\eqref{F3}.

{F}rom Proposition \ref{derivatives of h}, we have that
$$
\frac{
\rho^{-1-s}\,\partial_t h(\rho,\varphi)}{
(1+h^2(\rho,\varphi))^{\frac{n+s}{2}} } = O(\rho^{-s}),$$
which
is integrable, thus~\eqref{F6} follows directly from \eqref{eqhsing}.
Similarly, we observe that 
\begin{eqnarray*}
&&\int_{S^{n-1}}\int_{-1}^1( \chi_{E_t}+\chi_{\Pi_t})\big( x_t+RM_{x_t}\omega +Rz\nu_t(x_t)\big)
\frac{z M_{x_t}\omega\cdot\nabla^\Gamma H_s(x_t)}{(1+z^2)^{\frac{n+s}{2}}} dz d\omega\\ &&\qquad=
\int_{S^{n-1}}\int_{\min(h(R\omega),1)}^0
\frac{z M_{x_t}\omega\cdot\nabla^\Gamma H_s(x_t)}{(1+z^2)^{\frac{n+s}{2}}} dz d\omega
\end{eqnarray*}
and equation  \eqref{limit on the cylinder} follows from the fact $h(0)=0$.

Finally, equation \eqref{F4}
follows from \cite{CFSW} and the fact
that $\partial_{\omega_i} x$ is tangential.

Equation \eqref{F5} follows now
by combining \eqref{F2} and \eqref{F3} and taking $R\to 0$
(another proof of~\eqref{F5} can be obtained using
formula~(B.2) of~\cite{Dav}; using Lemmata~A.2 and~A.4 there,
one also obtains an expansion of the quantity in~\eqref{F5}
as $s$ approaches~$1$).
\end{proof}

\begin{remark}
An equation analogous to \eqref{F5} was obtained in \cite{Dav} in a different context. Their results imply that
\begin{align*} s(1-s)\textrm{P.V.} \int_{\partial E_t}\frac{H_s(y)-H_s(x)}{|x-y|^{n+s}} dy&\to \Delta_{ \partial E_t} H \hbox{ as } s\to 1\\
s(1-s)\textrm{P.V.} \int_{\partial E_t}\frac{1-\nu(y)\cdot\nu(x)}{|x-y|^{n+s}} dy&\to |A|^2 \hbox{ as } s\to 1,\end{align*}
which recovers the classical evolution for the mean curvature $H$ under evolution by mean curvature flow.
\end{remark}




\section{Preservation of the fractional mean curvature}\label{for}

In this section we 
 show that the geometric flow preserves the positivity
of the fractional mean curvature. We need the following lemma that 
 excludes the possibility of compact hypersurfaces
with fractional
mean curvature equal to zero (we state the result
for smooth sets for the sake of simplicity):

\begin{lemma}\label{S.L}
There exists no compact hypersurface with 
with~$C^2$-boundary and vanishing fractional
mean curvature.
\end{lemma}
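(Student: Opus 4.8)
The plan is to argue by contradiction using the sign properties of the kernel in the definition~\eqref{Hs} of $H_s$, together with a touching-ball (or touching-plane) argument at an extremal point of the hypersurface. Suppose $\Sigma=\partial E$ is a compact hypersurface of class $C^2$ with $H_s\equiv 0$ on $\Sigma$. Since $E$ is bounded, I would pick a direction, say $e_n$, and consider a point $x_0\in\Sigma$ that maximizes the coordinate $x\cdot e_n$ over $\overline E$. At such a point the set $E$ lies (locally, and by the choice of $x_0$ in fact globally in the $e_n$-direction) below the horizontal hyperplane $\{x\cdot e_n = x_0\cdot e_n\}$; moreover, since $\Sigma$ is $C^2$, near $x_0$ the set $E$ lies inside a ball $B_r(x_0 - r e_n)$ tangent to $\Sigma$ from inside at $x_0$, for $r>0$ small enough.

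The key step is then to exploit the strict monotonicity of $H_s$ under inclusion, which is already used in the proof of Theorem~\ref{comparison general}: if $A\subseteq B$ and $x\in\partial A\cap\partial B$ with a common tangent plane, then $H_s(x,A)\ge H_s(x,B)$, with strict inequality unless $A$ and $B$ agree (up to a null set). First I would compare $E$ with the half-space $\Pi^- := \{x\cdot e_n \le x_0\cdot e_n\}$: since $E\subseteq \Pi^-$ and $x_0\in\partial E\cap\partial\Pi^-$ with the same (horizontal) normal, and since $H_s(x_0,\Pi^-)=0$ by the oddness of the kernel under reflection across the tangent hyperplane (cf.~\eqref{plane integral}), the monotonicity gives $H_s(x_0,E)\ge H_s(x_0,\Pi^-)=0$. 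To get a \emph{strict} inequality I would instead compare with the tangent ball: $B_r(x_0-re_n)\subseteq E$ and the two sets share the boundary point $x_0$ and the normal $\nu(x_0)$, hence $H_s(x_0,E)\le H_s(x_0,B_r(x_0-re_n))$, and the latter is a positive number by Lemma~\ref{SC} (it equals $\varpi r^{-s}>0$). Actually the cleaner route is: $E$ is trapped between the small inner ball at $x_0$ and the half-space $\Pi^-$, and since $E$ is a bounded set it cannot coincide with the half-space, so at least one of these inclusions is strict on a set of positive measure, forcing $H_s(x_0,E)>0$, contradicting $H_s\equiv 0$.

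The main obstacle I anticipate is making the strict inequality in the inclusion-monotonicity of $H_s$ rigorous and quantitative: the comparison in Theorem~\ref{comparison general} is stated only as a non-strict inequality $H_s(E,\cdot)\ge H_s(F,\cdot)$ for nested sets, so I would need to record, as a preliminary observation, that if $A\subseteq B$, $|B\setminus A|>0$, and $x\in\partial A\cap\partial B$ share a tangent hyperplane, then in fact $H_s(x,A)>H_s(x,B)$ — this is immediate from~\eqref{Hs} because $\tilde\chi_A - \tilde\chi_B = 2\chi_{B\setminus A}\ge 0$ and the kernel $|x-y|^{-n-s}$ is strictly positive, so the difference of the two integrals is strictly positive. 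With $B=E$ and $A=B_r(x_0-re_n)$ this yields $0=H_s(x_0,E) > H_s(x_0,B_r(x_0-re_n)) - \big(H_s(x_0,E)-H_s(x_0,B_r)\big)$; more directly $H_s(x_0,E)\ge H_s(x_0,B_r(x_0-re_n))=\varpi r^{-s}>0$ if one only needs the non-strict version against the inner ball, which already contradicts $H_s(x_0,E)=0$. The remaining technical point is the existence of the interior tangent ball at the extremal point $x_0$, which follows from the $C^2$-regularity of $\Sigma$ together with compactness (uniform interior ball condition), and the fact that $x_0$ being a maximum in the $e_n$-direction guarantees the ball can be taken centered along $-\nu(x_0)=-e_n$ inside $E$.
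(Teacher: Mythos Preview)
Your half-space comparison is exactly the paper's argument and is, on its own, a complete proof: pick $x_0$ maximizing $x\cdot e_n$ on $\overline E$, note $E\subseteq\Pi^-$ with $x_0\in\partial E\cap\partial\Pi^-$, compute $\tilde\chi_E-\tilde\chi_{\Pi^-}=2\chi_{\Pi^-\setminus E}\ge0$, and since $E$ is bounded this function is positive on a set of positive measure, so $H_s(x_0,E)>H_s(x_0,\Pi^-)=0$. That is precisely what the paper does.

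The interior-ball detour, however, is both unnecessary and contains an error. With $A=B_r(x_0-re_n)\subseteq E=B$, your own monotonicity statement ``$A\subseteq B\Rightarrow H_s(x,A)\ge H_s(x,B)$'' yields $H_s(x_0,B_r)\ge H_s(x_0,E)$, i.e.\ $H_s(x_0,E)\le\varpi r^{-s}$, which is an \emph{upper} bound and gives no contradiction. Your final line ``more directly $H_s(x_0,E)\ge H_s(x_0,B_r(x_0-re_n))=\varpi r^{-s}>0$'' has the inequality reversed. (An \emph{exterior} tangent ball at $x_0$ would give a useful lower bound, but at a maximum of $x\cdot e_n$ there is no guaranteed exterior ball of fixed radius inside the half-space comparison --- and in any case the half-space already does the job with a strict inequality coming for free from boundedness of $E$.) Drop the ball argument and keep only the half-space comparison; then your proof coincides with the paper's.
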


\begin{proof} The proof is based on a sliding method.
Roughly speaking, we take 
let~$E\subset\R^n$ be a bounded
set with~$C^2$-boundary and such that~$H_s(x,E)=0$ for any~$x\in
\partial E$, we consider
a plane of given
normal direction~$\omega$, we
slide it from infinity till it touches~$E$, and then we compare
the fractional mean curvatures at a touching point to obtain the desired
result. The details of the proof go as follows. We suppose that
\begin{equation}\label{E-02}E\ne\varnothing.\end{equation}
Let
\begin{eqnarray*}
&&\Pi_M:=\{ x\in\R^n,\ x\cdot \omega < M \}\\
{\mbox{and }}&& M_*:=\inf\{ M, \ E \subset \Pi_M\}.\end{eqnarray*}
Notice that~$M_*\in\R$, thanks to~\eqref{E-02}
and the boundedness of $E$.
In addition, $E$ is a subset of~$\Pi_{M_*}$ and
there exists~$x_t
\in (\partial E)\cap (\partial\Pi_{M_*})$.
We claim that~$E=\Pi_{M_*}$ (up to negligible subsets lying on the boundary,
and this will end the proof of Lemma~\ref{S.L}).
Indeed, if not, the positivity set of the function
$$ \tilde\chi_E -\tilde\chi_{\Pi_{M_*}}=2\chi_{\Pi_{M_*}\setminus E}$$
would have positive measure and therefore
\begin{eqnarray*}
0<\int_{\R^n} \frac{\tilde\chi_E (y)-\tilde\chi_{\Pi_{M_*}}(y)}{|x_t
-y|^{n+s}}\,dy
=H_s(x_t
,E)-H_s(x_t
,\Pi_{M_*})=0-0,
\end{eqnarray*}
and this is a contradiction.
\end{proof}

\begin{theorem}\label{Pos}
Let~$E_t$
be a compact solution of~\eqref{fmcfgeneral}. Assume that $H_s$ is differentiable and that $E_0$ has strictly positive fractional
mean curvature. Then, $E_t$  has strictly positive fractional
mean curvature for every~$t\in(0,T)$.
\end{theorem}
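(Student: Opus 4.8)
The strategy is a standard parabolic maximum–principle argument applied to the evolution equation for $H_s$ derived in Theorem~\ref{derivatives non local quantities}, combined with the rigidity statement of Lemma~\ref{S.L}. First I would recall from \eqref{F5} that, along the reparameterized flow $\Gamma(\omega,t)$ satisfying \eqref{simplified eq},
\begin{equation*}
\frac{\partial_t H_s(x_t)}{2s(1-s)}
=\textrm{P.V.}\int_{\partial E_t}\frac{H_s(y)-H_s(x_t)}{|x_t-y|^{n+s}}\,dy
+H_s(x_t)\,\textrm{P.V.}\int_{\partial E_t}\frac{1-\nu(x_t)\cdot\nu(y)}{|x_t-y|^{n+s}}\,dy .
\end{equation*}
Since $E_t$ is compact and the evolution is smooth on $[0,T)$, for any fixed $t$ the function $\omega\mapsto H_s(\Gamma(\omega,t))$ attains its minimum on the compact surface $\partial E_t$; call this value $\mu(t):=\min_{\partial E_t}H_s(\cdot,E_t)$. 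By hypothesis $\mu(0)>0$, and I want to show $\mu(t)>0$ for all $t\in(0,T)$.

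\textbf{Key steps.} The main point is to show that $\mu$ cannot first reach a nonpositive value. Suppose, for contradiction, that there is a first time $t_0\in(0,T)$ with $\mu(t_0)\le 0$, so $\mu(t)>0$ on $[0,t_0)$ and $\mu(t_0)=0$ (using continuity of $\mu$, which follows from smoothness of the flow and continuity of $H_s$). Let $x_{t_0}=\Gamma(\bar\omega,t_0)$ be a point realizing this minimum, so $H_s(x_{t_0},E_{t_0})=0$ and $H_s(y,E_{t_0})\ge 0$ for all $y\in\partial E_{t_0}$. Evaluating the evolution equation at this point: the second integral on the right is nonnegative because $1-\nu(x_{t_0})\cdot\nu(y)\ge 0$ and the prefactor $H_s(x_{t_0})=0$ kills it entirely; the first integral is $\textrm{P.V.}\int_{\partial E_{t_0}}\frac{H_s(y)-0}{|x_{t_0}-y|^{n+s}}\,dy\ge 0$ since the integrand is nonnegative (here the principal value is harmless because the cancellation of the singular part is built into the derivation; alternatively one uses the regularization $H_s^{\mathrm{reg}}+H_s^{\mathrm{sing}}$). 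Hence $\partial_t H_s(x_{t_0})\ge 0$. On the other hand, at a first time when the spatial minimum hits $0$ from above, one expects $\frac{d}{dt}\mu(t)\big|_{t=t_0}\le 0$; comparing, we must have $\partial_t H_s(x_{t_0},E_{t_0})=0$ and moreover the first integral must vanish, which forces $H_s(y,E_{t_0})\equiv 0$ on $\partial E_{t_0}$. But then $E_{t_0}$ is a compact hypersurface with $C^2$ boundary and identically vanishing fractional mean curvature, contradicting Lemma~\ref{S.L}. This contradiction shows $\mu(t)>0$ on $[0,T)$, which is the assertion.

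\textbf{Main obstacle.} The delicate point is justifying the inequality $\frac{d}{dt}\mu(t)\big|_{t=t_0}\le 0$ and making rigorous the differentiation of the min of a moving family. Since $\partial E_t$ itself moves with $t$, one should first pass to the reparameterization of Theorem~\ref{reparameterization} so that the surface is described over a fixed parameter domain $U$, and then $\mu(t)=\min_{\omega\in U}H_s(\Gamma(\omega,t),E_t)$ is a minimum over a fixed compact set of a function differentiable in $t$ (by the hypothesis that $H_s$ is differentiable). Then the standard lemma on differentiating a minimum (Hamilton's trick) gives that $\mu$ is Lipschitz and, at a point of differentiability, $\mu'(t_0)=\partial_t H_s(\Gamma(\bar\omega,t_0),E_{t_0})$ for $\bar\omega$ realizing the minimum; combined with $\mu(t)>0=\mu(t_0)$ for $t<t_0$ this yields $\mu'(t_0)\le 0$. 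A secondary technical care is the principal-value integral: one should argue, exactly as in the derivation leading to \eqref{eqhsing}, that near a minimum point the integrand's singular part, written against the tangent-plane cancellation, is integrable, so that $\textrm{P.V.}\int\frac{H_s(y)-H_s(x_{t_0})}{|x_{t_0}-y|^{n+s}}\,dy$ is a genuine nonnegative quantity and vanishes only if $H_s\equiv 0$. With these two points dispatched, the maximum principle plus Lemma~\ref{S.L} close the argument.
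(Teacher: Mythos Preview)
Your proof is correct and follows essentially the same approach as the paper: argue by contradiction at the first time the spatial minimum of $H_s$ reaches zero, use the evolution equation \eqref{F5} to see that $\partial_t H_s\ge 0$ at that point while the minimum being first attained there forces $\partial_t H_s\le 0$, and conclude $H_s\equiv 0$, contradicting Lemma~\ref{S.L}. The paper is slightly more terse---it follows a single trajectory $t\mapsto H_s(x_t,E_t)$ and notes that $\nabla H_s=0$ at the spatial minimum (which is what makes the first principal-value integral absolutely convergent and genuinely nonnegative)---while you phrase the same idea via $\mu(t)=\min H_s$ and Hamilton's trick; these are the same argument in different clothing.
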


\begin{proof} Suppose the contrary. Then, if~$E=E_t$ is the evolving
surface, we have that~$H_s(x,E_t)>0$ for any~$x\in \partial E_t$
and any~$t\in (0,\bar t)$, but
\begin{equation}\label{X00}
H_s(\bar x,E_{\bar t})=0,\end{equation}
for some~$\bar x\in\partial E_{\bar t}$, with~$\bar t\in(0,T)$.

Notice that~$x_t\in\partial E_t$ and the function
$$ t\mapsto H_s(x_t,E_t)$$
attains its minimum in the interval~$[0,\bar t]$
and the endpoint~$\bar t$ and therefore~$
\partial_t H_s(x_t,E_t)\big|_{t=\bar t}\le 0$. Since it is also a
spatial critical point for $H_s$, we have that $\nabla H_s(\bar{x},E_t)\big|_{t=\bar t}=0$. From \eqref{F5}  in 
Theorem~\ref{derivatives non local quantities} and~\eqref{fmcfgeneral}
we obtain that
$$\partial_t H_s(\bar{x}, \bar{t})=s(1-s) \int_{\partial E_t}\frac{H_s(y)}{|y-x_{t-h}|^{n+s}}\,
d{\mathcal{H}}^{n-1}(y)  \geq 0.$$
However, since $
\partial_t H_s(x_t,E_t)\big|_{t=\bar t}\le 0$ we have that   $H_s(y)\equiv 0$, which due  to   Lemma \ref{S.L} contradicts the compactness of  $E_t$.
\end{proof}

Following the same proof as above,
we can also take into account
the case of a non-compact solution, as described in the following result.

\begin{theorem}\label{Pos2}
Let~$T>0$ and~$E_t$
be a solution of~\eqref{fmcfgeneral} in~$[0,T]$. Assume that $H_s$ is differentiable, that $E_0$ has strictly positive fractional
mean curvature and that $\partial E_t$ is uniformly spatially $C^2$ in $[0,T]$. Then, $E_t$  has strictly positive fractional
mean curvature for every~$t\in[0,T]$.
\end{theorem}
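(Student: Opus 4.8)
The plan is to mimic the argument of Theorem~\ref{Pos} essentially verbatim, replacing the use of compactness by the hypothesis that $\partial E_t$ is uniformly spatially $C^2$ in $[0,T]$, which is precisely what is needed to rule out the degenerate limiting configuration and to make the relevant quantities (the curvature $H_s$, its time derivative, its spatial gradient) well defined and continuous up to the endpoint. First I would argue by contradiction: assume $H_s(\cdot,E_0)>0$ but that positivity fails somewhere in $[0,T]$. Using that $t\mapsto H_s(x_t,E_t)$ is differentiable and the uniform $C^2$ bound (so that the infimum over $\partial E_t$ is attained and varies reasonably in $t$), one picks a first time $\bar t\in(0,T]$ and a point $\bar x\in\partial E_{\bar t}$ at which $H_s(\bar x,E_{\bar t})=0$ while $H_s(x,E_t)>0$ for all $x\in\partial E_t$ and $t<\bar t$. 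At such a point $\bar x$ one has the one-sided time derivative inequality $\partial_t H_s(x_t,E_t)\big|_{t=\bar t}\le 0$, and since $\bar x$ is also a spatial minimum of $H_s(\cdot,E_{\bar t})$ (which takes the value $0$ there and is nonnegative, by the choice of $\bar t$ and continuity in $t$), the tangential gradient $\nabla^\Gamma H_s$ vanishes at $\bar x$.

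Next I would substitute into formula~\eqref{F5} of Theorem~\ref{derivatives non local quantities}. Because $\nabla^\Gamma H_s(\bar x)=0$, the second principal-value integral in~\eqref{F5} drops out, and because $H_s(\bar x,E_{\bar t})=0$ the term with $H_s(x)$ also disappears, leaving
$$\partial_t H_s(\bar x,E_{\bar t})=2s(1-s)\,\textrm{P.V.}\int_{\partial E_{\bar t}}\frac{H_s(y)}{|\bar x-y|^{n+s}}\,d{\mathcal H}^{n-1}(y).$$
Since $H_s\ge 0$ on $\partial E_{\bar t}$ (again by the choice of $\bar t$ and continuity), the right-hand side is $\ge 0$; combined with $\partial_t H_s(\bar x,E_{\bar t})\le 0$ this forces the integral to vanish, hence $H_s(y,E_{\bar t})=0$ for every $y\in\partial E_{\bar t}$. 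That is, $\partial E_{\bar t}$ is a $C^2$ hypersurface with identically vanishing fractional mean curvature. In the compact case Lemma~\ref{S.L} rules this out immediately; here one instead invokes the uniform spatial $C^2$ hypothesis to get the needed contradiction — either $\partial E_{\bar t}$ is in fact compact (an interior patch argument plus the $C^2$ bound forces the touching-plane slide of Lemma~\ref{S.L} to go through on the bounded part), or one runs the sliding-plane method of Lemma~\ref{S.L} directly using the uniform $C^2$ estimate to guarantee a genuine first-touching point at finite parameter and a strict gain of fractional mean curvature there.

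The routine parts — manipulating~\eqref{F5}, the sign bookkeeping — are immediate. The main obstacle is the last step: justifying that ``$H_s\equiv 0$ on $\partial E_{\bar t}$'' is impossible without compactness. The sliding-plane proof of Lemma~\ref{S.L} used boundedness of $E$ twice (to know $M_*$ is finite, and to know the slab $\Pi_{M_*}\setminus E$ has positive measure if $E\ne\Pi_{M_*}$); reproducing it under only a uniform $C^2$ bound requires care — one must either add a mild growth/geometry assumption on $E_t$ implicit in the statement of the theorem, or localize the argument (compare $E$ with a large ball or a halfspace touching it from outside, use the uniform curvature bound to control the touching), and then conclude that a hypersurface satisfying $H_s\equiv0$ would have to coincide with a hyperplane, which contradicts the strict positivity assumed to hold initially and propagated up to just before $\bar t$. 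Making this localization rigorous, consistent with whatever ``uniformly spatially $C^2$'' is taken to mean, is where the real work lies; everything else transfers directly from the proof of Theorem~\ref{Pos}.
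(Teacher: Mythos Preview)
Your reduction to the equation for $\partial_t H_s$ and the conclusion that $H_s(\cdot,E_{\bar t})\equiv 0$ is fine and matches the first line of the paper's proof. The genuine gap is exactly the one you flag at the end, and your proposed fixes do not close it. There is no reason that a uniformly $C^2$ non-compact hypersurface cannot have $H_s\equiv 0$: hyperplanes do, and nontrivial nonlocal minimal surfaces are known to exist. The sliding argument of Lemma~\ref{S.L} needs boundedness of $E$ to produce a first touching point, and ``uniformly $C^2$'' does not supply a substitute. Also, even if you could conclude that $\partial E_{\bar t}$ is a hyperplane, that does not by itself contradict $H_s(\cdot,E_0)>0$, since you are now at time $\bar t$, not time $0$. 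A related issue occurs earlier: on a non-compact $\partial E_t$ the infimum of $H_s$ need not be attained, so the existence of the point $\bar x$ is not guaranteed by the $C^2$ bound alone.

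The paper takes a completely different route for the second step. Instead of trying to rule out $H_s\equiv 0$ by a global geometric argument, it fixes an arbitrary trajectory $x_t\in\partial E_t$ and builds a localized barrier
\[
v(y,t)=e^{C_1 t}\Big(\tfrac{H_s(y)}{s(1-s)}+\epsilon\Big)-\delta\,e^{-C_2 t}\phi(y-x_t),
\]
with $\phi$ a smooth bump supported in $B_1$. Using~\eqref{F5} one computes $\partial_t v$ and runs a first-touching argument for $v$: the uniform $C^2$ bound is used only to control the fractional operator applied to $\phi$ and to bound $H_s$ from above, and the compact support of $\phi$ forces any first zero of $v$ to occur in $B_1(x_t)$, so no global attainment of a minimum of $H_s$ is needed. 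One concludes $v\ge 0$, and taking $y=x_t$, $\epsilon\to 0$ gives $H_s(x_t)\ge s(1-s)\,\delta\,e^{-(C_1+C_2)t}>0$. This is the idea you are missing: localize the maximum principle with a bump function rather than trying to globalize Lemma~\ref{S.L}.
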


\begin{proof}

Proceeding as in the proof of the previous result,
we can also show that an alternative holds true,
namely either $E_t$  has strictly positive fractional
mean curvature for every~$t\in[0,T]$ or there is a $t_0$ such that $E_t$ 
has vanishing fractional mean curvature
for every $t\geq t_0$.

Now we show that $H_s$ cannot become identically 0. 
For this, up to a dilation,
we take a scale for which the evolving surface is locally
a smooth graph in balls of radius~$2$ centered at the surface.
Let $\phi$ be a nonnegative function supported  in the unit ball $B_1$ and $\phi\equiv 1$ in $B_{\frac{1}{2}}$. Fix $x_t=x(0,t)\in \partial E_t$ and $\epsilon>0$.
Consider the function $v:\rr^n\times [0,T)$ defined $$v(y,t)=e^{C_1 t}
\left( \frac{H_s(y)}{s\,(1-s)}+\epsilon\right)- \delta e^{-C_2 t}\phi(y-x_t), $$ where $\delta$ is chosen
such that $v(y,0)>0$ and $C_1,C_2$ are real constant to be determined. Notice that  $\delta$ can be chosen independently of $\epsilon>0$

Using equations \eqref{F5} and~\eqref{simplified eq},
and denoting by $\nu_t$ the normal at $x_t$,
we have, for $y\in \partial E_t$,
\begin{align*}& \partial_t v(y,t)\\ =\;& C_1e^{C_1 t}\left(
\frac{H_s(y)}{s\,(1-s)}+\epsilon\right)+e^{C_1 t}\left(2\textrm{P.V.}  \int_{\partial E_t} \frac{H_s(z)-H_s(y)
}{|z
-y|^{n+s}}dz+ 2H_s(y)\textrm{P.V.}  \int_{\partial E_t} \frac{1-\nu(z)\cdot\nu(y)
}{|z
-y|^{n+s}}dz \right)\\ &\qquad+C_2\delta e^{-C_2 t}\phi(y-x_t)+
H_s(x_t) \delta e^{-C_2 t}\nu_t\cdot\nabla \phi(y-x_t)\\
=\;& C_1e^{C_1 t}\left(\frac{H_s(y)}{s\,(1-s)}
+\epsilon\right)+2s(1-s)\left(\textrm{P.V.}  \int_{\partial E_t} \frac{v(z,t)-v(y,t)
}{|z
-y|^{n+s}}dz+ e^{C_1t}\,H_s(y)\textrm{P.V.}  \int_{\partial E_t} \frac{1-\nu(z)\cdot\nu(y)
}{|z
-y|^{n+s}}dz \right)\\ &+2s\,(1-s)\delta e^{-C_2 t}\textrm{P.V.}  \int_{\partial E_t} \frac{\phi(z-x_t)-\phi(y-x_t)
}{|z
-y|^{n+s}}dz+C_2\delta e^{-C_2 t}\phi(y-x_t)\\&+
H_s(x_t) \delta e^{-C_2 t}\nu_t\cdot\nabla \phi(y-x_t)
.\end{align*}

Now we claim that
\begin{equation}\label{UGU}
v(y,t)\ge 0.
\end{equation}
Since this holds for~$t=0$ (as long as~$\delta$ is sufficiently small),
to prove~\eqref{UGU} we can argue by contradiction
and
assume that there is a first time $\bar{t}$ and a point $\bar{y}$ such that $v(\bar{y},\bar{t})=0$. Such a point  is a local minimum and it holds that 
\begin{eqnarray*}
&&\partial_t v(\bar{y},\bar{t})\leq 0,  \\
&&\textrm{P.V.}  \int_{\partial E_{\bar{t}}} \frac{v(z,\bar{t})-v(\bar{y},\bar{t})
}{|z
-\bar{y}|^{n+s}}dz\geq 0\\
{\mbox{and }}&&
e^{C_1 \bar{t}}\left(\frac{H_s(\bar{y})}{s\,(1-s)}
+\epsilon\right)= \delta e^{-C_2 \bar{t}}\phi(\bar{y}-x_{\bar{t}}).\end{eqnarray*}
Hence, we have
\begin{equation}\label{MS:S:A}
\begin{split}
0\geq\partial_t v(\bar{y},\bar{t})\ge 
& \;C_1 \delta e^{-C_2 \bar{t}}\phi(\bar{y}-x_{\bar{t}})+
2s\,(1-s)\delta e^{-C_2 \bar{t}}\textrm{P.V.}  
\int_{\partial E_{\bar{t}}} \frac{\phi(z-x_{\bar{t}})-
\phi(\bar{y}-x_{\bar{t}})
}{|z
-\bar{y}|^{n+s}}dz\\ & +C_2\delta e^{-C_2 \bar{t}}\phi(\bar y-x_{\bar{t}})+
H_s(x_{\bar{t}}) \delta 
e^{-C_2\bar{t}}\nu_{\bar{t}}\cdot\nabla \phi(\bar y-x_{\bar{t}}).\end{split}\end{equation}
Now we claim that 
\begin{equation}\label{bar - y}
|\bar y-x_{\bar t}|<1.
\end{equation}
To this end, we argue by contradiction and suppose that~$|\bar y-x_t|\ge1$.
Then, using~\eqref{MS:S:A} and the assumption on the support of~$\phi$,
we find that
$$0\geq
2s\,(1-s)\delta e^{-C_2 \bar{t}}\textrm{P.V.}
\int_{\partial E_{\bar{t}}} \frac{\phi(z-x_{\bar{t}})}{|z
-\bar{y}|^{n+s}}dz >0.$$
This is a contradiction and so~\eqref{bar - y}
is proved.

Now we improve~\eqref{bar - y}, by showing that there exists~$\epsilon_0\in(0,1)$
such that
\begin{equation}\label{bar - y -eps0}
|\bar y-x_{\bar t}|<1-\epsilon_0.
\end{equation}
Again, we argue by contradiction and suppose that~$|\bar y-x_{\bar t}|\in [1-\epsilon_0,1)$.
Since~$\phi$ is smooth and vanishes along~$\partial B_1$,
we have that~$\phi(\bar y-x_{\bar t}) + |\nabla\phi(\bar y-x_{\bar t})|\le C\epsilon_0$,
for some~$C>0$. Hence, using~\eqref{MS:S:A},
and taking~$K>0$ such that
\begin{equation}\label{KAKA}
{\mbox{$H_s(x)\leq K$ for every  $x\in\partial E_t$,}}\end{equation} we see that
\begin{eqnarray*}
0 &\geq& 
2s\,(1-s) \delta e^{-C_2 \bar{t}}\textrm{P.V.}
\int_{\partial E_{\bar{t}}} \frac{\phi(z-x_{\bar{t}})-
C\epsilon_0}{|z-\bar{y}|^{n+s}}dz -CK\delta
e^{-C_2\bar{t}}\epsilon_0
\\
&\geq& \delta e^{-C_2 \bar{t}}\left[
2s\,(1-s) \textrm{P.V.}
\int_{(\partial E_{\bar{t}})\cap B_{1/2}(x_{\bar t})} 
\frac{1-C\epsilon_0}{|z-\bar{y}|^{n+s}}dz
-CK\epsilon_0
\right].\end{eqnarray*}
So we multiply by~$\delta^{-1} e^{C_2 \bar{t}}$ and, if~$\epsilon_0$
is small enough, we find that
\begin{eqnarray*} 0&\ge& s\,(1-s) \textrm{P.V.}
\int_{(\partial E_{\bar{t}})\cap B_{1/2}(x_{\bar t})} 
\frac{dz}{|z-\bar{y}|^{n+s}} -CK\epsilon_0
\\ &\ge& 2^{-n-s}s\,(1-s) 
{\mathcal{H}}^{n-1} \big( (\partial E_{\bar{t}})\cap B_{1/2}(x_{\bar t})\big)
-K\epsilon_0.\end{eqnarray*}
The smoothness of the surface gives that
$$ {\mathcal{H}}^{n-1} \big( (\partial E_{\bar{t}})\cap B_{1/2}(x_{\bar t})\big)\ge c_0$$
for some~$c_0>0$. The last two inequalities easily give a contradiction
if~$\epsilon_0$ is small enough, and so we have established~\eqref{bar - y -eps0}.

Now we set~$r_0:=1-\epsilon_0$ and we choose~$C_1$ large enough, such that
\begin{equation}\label{KAA2}
C_1\ge \frac{K\sup_{B_1}|\nabla\phi|}{\inf_{B_{r_0}}\phi},\end{equation}
where~$K$ is as in~\eqref{KAKA}.

Notice that, by~\eqref{bar - y -eps0} and~\eqref{KAA2},
\begin{equation}\label{8uj66GA}
C_1 \delta e^{-C_2 \bar{t}}\phi(\bar{y}-x_{\bar{t}})+
H_s(x_{\bar{t}}) \delta
e^{-C_2\bar{t}}\nu_{\bar{t}}\cdot\nabla \phi(\bar y-x_{\bar{t}})\ge0.\end{equation}
Let also~$C_2$ so large that
$$ C_2 \ge
\sup_{y\in B_{r_0}(x_{\bar t})}\frac{
2s\,(1-s)\delta e^{-C_2 \bar{t}} }{\phi(\bar{y}-x_{\bar{t}})}\textrm{P.V.}
\int_{\partial E_{\bar{t}}} \frac{
\phi(\bar{y}-x_{\bar{t}})-
\phi(z-x_{\bar{t}})
}{|z
-\bar{y}|^{n+s}}dz.$$
In this way, and using again~\eqref{bar - y -eps0},
$$ 2s\,(1-s)\delta e^{-C_2 \bar{t}}\textrm{P.V.}
\int_{\partial E_{\bar{t}}} \frac{\phi(z-x_{\bar{t}})-
\phi(\bar{y}-x_{\bar{t}})
}{|z
-\bar{y}|^{n+s}}dz +C_2\delta e^{-C_2 \bar{t}}\phi(\bar y-x_{\bar{t}})\ge0.$$
Then, we plug this information and~\eqref{8uj66GA}
into~\eqref{MS:S:A} and we obtain a contradiction.
This proves~\eqref{UGU}.

Then we take~$y=x_t$
and send~$\epsilon\to0$ in~\eqref{UGU} and we obtain that~$H_s(x_t)$
remains positive.
\end{proof}

\section{Estimates for entire graphs}\label{EGS}

In this section we assume that the surface is an entire graph with 
linear growth at infinity. 
That is, the surface can be parameterized by $(x,u(x,t))$ and 
\begin{equation}\label{INF:R5}
\sup_{{t\in[0,T)}\atop{|x|\ge R}
}|D u(x,t)|\le C,\end{equation}
for some~$C$, $R>0$.
Moreover, $u$ satisfies 
$$\partial_tu=-\sqrt{1+|Du|^2}\; H_s (E_u).$$

\begin{theorem} \label{height function estimate}
Let $\nu$ be the normal vector of a 
graphical surface evolving by \eqref{fmcfgeneral} and $e$ any fixed vector. 
Let
$v=(e\cdot \nu)^{-1}$, then
$$v(x,t) \leq \sup\left\{\sup_y v(y,0),\, C\right\},$$
where $C$ is such that 
\begin{equation}\label{INF:R6}
\limsup_{|x|\to \infty} v(x,t) <C,\end{equation}
for all~$t\in[0,T)$.
\end{theorem}

\begin{proof}
Let us assume that the surface is parameterized according to \eqref{simplified eq} and $\nu$ satisfies \eqref{evolutionnormal}. Then
$$\partial_t v=-v^2(e\cdot\nabla^{\Gamma} H_s).$$
{F}rom Theorem~\ref{derivatives non local quantities} we have that
$$e\cdot\nabla^{\Gamma} H_s= (n+s)s(1-s)  \textrm{P.V.}\int_{\rr^n} \tilde{\chi}_{E_t}(y)\frac{(y-x)\cdot e^T}{|x-y|^{n+s+2}}dy, 
$$
where $e^T$ is the tangential component of $e$ at $x_t
$.

Noticing that $(n+s)\frac{(y-x)\cdot e^T}{|x-y|^{n+s+2}}=-\textrm{div}_y\left(\frac{e^T}{|x-y|^{n+s}}\right)$, it follows from the divergence theorem that 
$$e\cdot\nabla^{\Gamma} H_s=2 s(1-s)\int_{\partial E_t}\frac{e^T\cdot \nu(y)}{|x-y|^{n+s}}.$$
Since $e^T=e-v^{-1}(x)\nu(x)$, it holds that $e^T\cdot \nu(y)=v^{-1}(y)-v^{-1}(x)\nu(x)\cdot \nu(y)$.
Then, if $v$ attains a maximum at $x$, we have that $e^T\cdot \nu(y) \geq 0$  (and similarly  $e^T\cdot \nu(y) \leq 0$ at minima).
We may conclude from the maximum principle that $v$ does not have interior maxima (resp. minima).
\end{proof}

Noticing that  for an evolving graph it holds that
$$(e_n\cdot \nu)^{-1}=\sqrt{1+|Du|^2},$$ and thus~\eqref{INF:R5}
implies~\eqref{INF:R6}, we have
\begin{corollary}
$|Du|$ is uniformly bounded in time.
\end{corollary}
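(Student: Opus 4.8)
The statement to prove is the corollary that $|Du|$ is uniformly bounded in time. This follows almost immediately from Theorem~\ref{height function estimate}.

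The plan is to specialize Theorem~\ref{height function estimate} to the case $e = e_n$. First I would recall the elementary identity relating the height function to the gradient: for a graph $\{(x,u(x,t))\}$ with the chosen normal $\nu = (-\nabla u,1)/\sqrt{1+|\nabla u|^2}$, one computes directly that $e_n \cdot \nu = 1/\sqrt{1+|\nabla u|^2}$, hence $(e_n\cdot\nu)^{-1} = \sqrt{1+|\nabla u|^2}$. So the quantity $v$ appearing in Theorem~\ref{height function estimate}, taken with $e=e_n$, is exactly $\sqrt{1+|\nabla u|^2}$.

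Next I would check that the hypotheses of Theorem~\ref{height function estimate} are met in the present setting. Since we are assuming an entire graph with linear growth and $|Du|(x,t)\to 0$ as $|x|\to\infty$ uniformly in $t$ (indeed $|Du|$ is assumed uniformly bounded for all times), the constant $C$ with $\limsup_{|x|\to\infty} v(x,t)\le C$ can be taken finite --- in fact, since $|Du|\to 0$ at spatial infinity uniformly, one may take $C = 1$. Therefore Theorem~\ref{height function estimate} gives $v \le \sup\{v(\cdot,0), C\}$, a bound that is finite and independent of $t$. Translating back, $\sqrt{1+|\nabla u(x,t)|^2} \le \sup\{\sqrt{1+|\nabla u_0|^2}, C\}$ for all $x$ and all $t$, which yields the claimed uniform-in-time bound on $|Du|$.

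Since everything reduces to an algebraic identity plus a direct citation of the preceding theorem, there is no real obstacle here; the only point requiring a word of care is that the constant $C$ in Theorem~\ref{height function estimate} is genuinely finite under the standing assumptions on the graph, which is guaranteed by the hypothesis that $|Du|$ stays uniformly bounded (equivalently that $\nu\cdot e_n$ stays bounded away from zero) at spatial infinity.
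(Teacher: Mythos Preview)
Your proposal is correct and matches the paper's own argument exactly: the paper simply records the identity $(e_n\cdot\nu)^{-1}=\sqrt{1+|Du|^2}$ for a graph and applies Theorem~\ref{height function estimate}. Your extra remark that one may take $C=1$ (since $|Du|\to0$ at spatial infinity) is a helpful explicit detail the paper leaves implicit; just be careful that your parenthetical ``$|Du|$ is assumed uniformly bounded for all times'' sounds circular---the standing hypothesis you actually need is only the uniform-in-$t$ decay of $|Du|$ at infinity.
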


\begin{theorem}
Let $v=\sqrt{1+|Du|^2}$, then 
the quantity $ v H_s$ is uniformly bounded in terms of the initial condition.
\end{theorem}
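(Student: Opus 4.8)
The plan is to observe that along the flow the height function $v=\sqrt{1+|Du|^2}=(e_n\cdot\nu)^{-1}$ and the fractional mean curvature $H_s$ solve one and the same linear nonlocal parabolic equation, so that the product $vH_s$ obeys a nonlocal equation with no zeroth order term and is therefore controlled by the maximum principle, in the spirit of Theorem~\ref{height function estimate}. Throughout, I would parameterize $\partial E_t$ as in \eqref{simplified eq}, set $g:=e_n\cdot\nu=1/v$ (strictly positive because $E_t$ is a graph), and introduce
\[
L[\phi](x):=2s(1-s)\,\textrm{P.V.}\!\int_{\partial E_t}\frac{\phi(y)-\phi(x)}{|x-y|^{n+s}}\,d\mathcal{H}^{n-1}(y),\qquad
\kappa(x):=2s(1-s)\,\textrm{P.V.}\!\int_{\partial E_t}\frac{1-\nu(x)\cdot\nu(y)}{|x-y|^{n+s}}\,d\mathcal{H}^{n-1}(y)\ge0 .
\]
Both are finite since $\partial E_t$ is $C^2$ with bounded gradient: near the diagonal the cancellations make the integrands $O(|x-y|^{1-n-s})$, respectively $O(|x-y|^{2-n-s})$, and at large distances the area form is comparable to the flat one, so the kernel $|x-y|^{-n-s}$ is integrable over the $(n-1)$-dimensional surface. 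By \eqref{F5} of Theorem~\ref{derivatives non local quantities}, $\partial_t H_s=L[H_s]+\kappa\,H_s$.

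Next I would derive the analogous equation for $g$. By \eqref{evolutionnormal}, $\partial_t g=e_n\cdot\partial_t\nu=e_n\cdot\nabla^\Gamma H_s$, and the computation carried out in the proof of Theorem~\ref{height function estimate} (with the fixed vector $e=e_n$) gives
\[
e_n\cdot\nabla^\Gamma H_s=2s(1-s)\int_{\partial E_t}\frac{e_n^{\,T}\cdot\nu(y)}{|x-y|^{n+s}}\,d\mathcal{H}^{n-1}(y),\qquad e_n^{\,T}=e_n-g(x)\,\nu(x).
\]
Since $e_n^{\,T}\cdot\nu(y)=g(y)-g(x)\,\nu(x)\cdot\nu(y)=\bigl(g(y)-g(x)\bigr)+g(x)\bigl(1-\nu(x)\cdot\nu(y)\bigr)$, this is precisely $\partial_t g=L[g]+\kappa\,g$, with the \emph{same} coefficient $\kappa$ as for $H_s$.

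Now I would set $w:=vH_s=H_s/g$, differentiate in time, and use the two identities above: the zeroth order contributions $\kappa H_s$ and $\kappa g$ cancel, and inserting $H_s=wg$ inside $L[H_s]$ gives
\[
\partial_t w=\frac{1}{g}\bigl(L[H_s]-w\,L[g]\bigr)=\frac{2s(1-s)}{g(x)}\,\textrm{P.V.}\!\int_{\partial E_t}\frac{g(y)\,\bigl(w(y)-w(x)\bigr)}{|x-y|^{n+s}}\,d\mathcal{H}^{n-1}(y),
\]
where the integral converges absolutely because $w(y)-w(x)=O(|x-y|)$ kills the singularity of the kernel. At a spatial maximum of $w(\cdot,t)$ one has $w(y)\le w(x)$ for all $y$ while $g>0$, so $\partial_t w\le0$; symmetrically $\partial_t w\ge0$ at a spatial minimum. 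Feeding this into the maximum principle argument as in Theorem~\ref{height function estimate} — using the hypotheses of the section ($|Du|$ uniformly bounded, $|Du|\to0$ at infinity, whence $v\to1$, $H_s\to0$ at infinity, so the spatial extrema of $w(\cdot,t)$ cannot escape to infinity) — yields that $\sup_{\R^{n-1}}|vH_s|(\cdot,t)$ is controlled by its value at $t=0$ (and, as in Theorem~\ref{height function estimate}, by $\limsup_{|x|\to\infty}|vH_s|$, which here vanishes).

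The algebra is short once one notices the coincidence of the two evolution equations, so the main obstacle is the justification on a noncompact surface: verifying that all the nonlocal integrals above converge uniformly in $t$, that the pointwise differentiation of $H_s$ and of $g$ is licit (granted by Theorem~\ref{derivatives non local quantities} once $H_s$ is spatially $C^1$, which holds while the $H_s$-curvature stays bounded), and — exactly as in Theorem~\ref{height function estimate} — controlling $w$ near infinity so that the maximum principle can be invoked; this last step is where the linear-growth assumption is used.
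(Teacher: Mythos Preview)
Your argument is correct and takes a genuinely different route from the paper.  The paper works entirely in the graphical parametrization: it writes $H_s(x_t,E_t)$ as a double integral over $\mathbb{R}^{n-1}$ with integration limits $z_m=\frac{u(x,t)-u(x_t,t)}{|x-x_t|}$ and $z_M=\nabla u(x_t,t)\cdot\frac{x-x_t}{|x-x_t|}$, differentiates in $t$ at fixed $x_t\in\mathbb{R}^{n-1}$, and observes that $\partial_t z_m=\frac{(vH_s)(x_t,t)-(vH_s)(x,t)}{|x-x_t|}$ while $\partial_t z_M=-\nabla(vH_s)(x_t,t)\cdot\frac{x-x_t}{|x-x_t|}$.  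At a spatial extremum of $vH_s$ the latter vanishes and the former has a sign, forcing $\partial_t(vH_s)=v\,\partial_t H_s$ to have the right sign; no general evolution equation for $vH_s$ is written.  By contrast you stay in the intrinsic frame \eqref{simplified eq}, recognize that both $H_s$ and $g=e_n\cdot\nu$ solve the \emph{same} linear equation $\partial_t\phi=L[\phi]+\kappa\phi$, and deduce a clean evolution equation for the quotient $w=H_s/g$ with no zeroth-order term.  Your approach is more structural (it explains \emph{why} $vH_s$ is the natural quantity: it is the ratio of two solutions of the same linear problem) and would extend verbatim to other flows whose speed satisfies \eqref{F5}; the paper's approach is more computational but has the virtue of being self-contained in graph coordinates.

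One small correction: the integral $\int_{\partial E_t}\frac{g(y)(w(y)-w(x))}{|x-y|^{n+s}}\,d\mathcal{H}^{n-1}(y)$ is \emph{not} absolutely convergent from $w(y)-w(x)=O(|x-y|)$ alone, since $|x-y|^{1-n-s}$ against the $(n-1)$-dimensional area measure gives a local integrand $\sim r^{-1-s}$.  The integral exists in the principal value sense (the linear part of $w(y)-w(x)$ cancels by approximate odd symmetry), and at a spatial extremum of $w$ one has $w(y)-w(x)=O(|x-y|^2)$, which does make it absolutely convergent; this is all you actually need for the maximum-principle step.
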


\begin{proof}
Considering the set $\Pi$ as the epigraph of the plane $z=u(x_t
,t)+\nabla u(x_t
,t)\cdot(x-x_t
)+u(x_t
,t)$,
 we may write
\begin{align*}H_s(x_t
,E)=&  s(1-s) \int_{\rr^{n-1}}\int_{u(x,t)}^{\nabla u(x_t
,t)\cdot(x-x_t
)+u(x_t
,t)}\frac{dz}{\left((z-u(x_t
,t))^2
+|x-x_t
|^2\right)^{\frac{n+2}{2}}}dx
\\=&  s(1-s) \int_{\rr^{n-1}}\frac{1}{|x-x_t
|^{n+s-1}}\int_{\frac{u(x,t)-u(x_t
,t)}{|x-x_t
|}}^{\nabla u(x_t
,t)\cdot\left(\frac{x-x_t
}{|x-x_t
|}\right)}\frac{dz}{\left(z^2
+1\right)^{\frac{n+s}{2}}}dx
\end{align*}

Let $z_m=\frac{u(x,t)-u(x_t
,t)}{|x-x_t
|}$ and $z_M=\nabla u(x_t
,t)\cdot \frac{x-x_t
}{|x-x_t
|}.$
Then
$$\partial_t z_m=\frac{-H_s v(x,t)+ H_s v(x_t
,t)}{|x-x_t
|}$$
and 
$$\partial_t z_M=\nabla(-H_s v(x_t
,t))\cdot\frac{x-x_t
}{|x-x_t
|}.$$
As a consequence, we have
\begin{align*}\partial_tH_s(x_t
,E)=&    s(1-s) \int_{\rr^{n-1}}\frac{1}{|x-x_t
|^{n+s-1}}
\left(\frac{\partial_t z_M}{\left(z_M^2
+1\right)^{\frac{n+s}{2}}}- \frac{\partial_t z_m}{\left(z_m^2
+1\right)^{\frac{n+s}{2}}}
\right), \hbox{ and }\\
\partial_t (v H_s)=&\frac{Du\cdot D(-H_s v)}{\sqrt{1+|Du|^2}}H_s+ v \partial_tH_s(x_t
,E).
\end{align*}

Assume that a maximum (resp. minimum) point of $v H_s$ is attained at $(x_t
, t_0).$ Then  $D(-H_s v)=0$, $\partial _t z_M=0$ and $\partial_t z_m \geq 0 $ (resp. $\leq 0$) and only identically  0 if $v H_s$ is constant. Then we conclude that there are no interior maxima or minima for this quantity.
\end{proof}

\begin{remark}
The previous estimates imply that if there is decay at infinity $H_s$ remains bounded for all times.
\end{remark}

 \section{Estimates for star-shaped surfaces}\label{ESS}

We show an estimate for star-shaped surfaces that is analog to Theorem \ref{height function estimate}

\begin{theorem}
Let $v=(x\cdot \nu)^{-1}$. Then there exists $T^*>0$ such that  $v(t)\leq C$ in $[0,T^*)$, where $C$ depends on $v(0)$ and $\sup |H_s|$.
\end{theorem}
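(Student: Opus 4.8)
The plan is to mirror the proof of Theorem~\ref{height function estimate}, the only genuinely new feature being that the ``direction'' defining $v$ is now the moving position vector $x$ rather than a fixed vector, which produces one extra zeroth-order term. First I would fix the reparametrization of Theorem~\ref{reparameterization}, so that $\partial_t x=-H_s\nu$ along the flow, and note that for a star-shaped surface $x\cdot\nu=f^2/\sqrt{|\nabla f|^2+f^2}>0$, so that $v=(x\cdot\nu)^{-1}$ is a well-defined positive function, $C^1$ in $t$. Differentiating and using $\partial_t\nu=\nabla^\Gamma H_s$ from~\eqref{evolutionnormal},
$$\partial_t v=-v^2\big(\partial_t x\cdot\nu+x\cdot\partial_t\nu\big)=-v^2\big(-H_s+x\cdot\nabla^\Gamma H_s\big)=v^2H_s-v^2\,x\cdot\nabla^\Gamma H_s.$$
Compared with the graphical case there is an extra term $v^2H_s$, which is harmless once $|H_s|$ is bounded, plus the term $-v^2\,x\cdot\nabla^\Gamma H_s$, to be analyzed at a maximum of $v$.

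Next, exactly as in the proof of Theorem~\ref{height function estimate}, I would rewrite $x\cdot\nabla^\Gamma H_s$ as a boundary integral: since $\nabla^\Gamma H_s$ is tangential, $x\cdot\nabla^\Gamma H_s=x^T\cdot\nabla^\Gamma H_s$ with $x^T=x-v^{-1}(x)\nu(x)$, and~\eqref{F4} combined with the identity $(n+s)\frac{(y-x)\cdot x^T}{|x-y|^{n+s+2}}=-\textrm{div}_y\!\left(\frac{x^T}{|x-y|^{n+s}}\right)$ and the divergence theorem give
$$x\cdot\nabla^\Gamma H_s=2s(1-s)\,\textrm{P.V.}\!\int_{\partial E_t}\frac{x^T\cdot\nu(y)}{|x-y|^{n+s}}\,dy ,$$
with $x$ frozen at the point in question. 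I would then localize at a point $x_0$ where $v(\cdot,t)$ attains its spatial maximum $v_{\max}(t)$ (which exists since $\partial E_t$ is compact) and split $x_0^T\cdot\nu(y)=\big[v^{-1}(y)-v^{-1}(x_0)\nu(x_0)\cdot\nu(y)\big]+(x_0-y)\cdot\nu(y)$. At the maximum $v^{-1}(y)\ge v^{-1}(x_0)$ and $\nu(x_0)\cdot\nu(y)\le1$, so the bracket is $\ge v^{-1}(x_0)\big(1-\nu(x_0)\cdot\nu(y)\big)\ge0$, whence
$$x_0\cdot\nabla^\Gamma H_s\ \ge\ 2s(1-s)\,J(x_0),\qquad J(x_0):=\textrm{P.V.}\!\int_{\partial E_t}\frac{(x_0-y)\cdot\nu(y)}{|x_0-y|^{n+s}}\,dy .$$

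It then remains to bound $|J(x_0)|$. The smooth surface separates quadratically from its tangent plane at $x_0$, so $(x_0-y)\cdot\nu(y)=O(|x_0-y|^2)$ near $x_0$, the integrand is $O(|x_0-y|^{2-n-s})$ there, and $J(x_0)$ converges absolutely; $|J(x_0)|$ is then controlled near $x_0$ by the $C^2$ bounds on $\partial E_t$ and, away from $x_0$, by the diameter and the area of $\partial E_t$ — all of which are bounded on a fixed interval $[0,T]$ by the standing regularity assumption and by Corollary~\ref{T:S}, which traps $E_t$ between shrinking balls whose radii are explicit functions of the initial ones. Inserting this into the evolution equation, at the spatial maximum one obtains
$$\frac{d}{dt}v_{\max}(t)\ \le\ \Big(\sup_{\partial E_t}|H_s|+2s(1-s)\,|J(x_0)|\Big)\,v_{\max}^2(t)\ \le\ M\,v_{\max}^2(t),\qquad t\in[0,T],$$
with $M$ depending only on $n,s$, on $\sup|H_s|$, and (through $J$) on the initial geometry. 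Integrating the inequality $\dot v_{\max}\le Mv_{\max}^2$ gives $v_{\max}(t)\le v_{\max}(0)/\big(1-Mv_{\max}(0)t\big)$, which stays bounded on $[0,T^*)$ for $T^*:=\min\{T,(2Mv_{\max}(0))^{-1}\}>0$; the explicit shrinking sphere of Corollary~\ref{COS} shows that a finite-time bound of this kind is sharp in nature. The main obstacle is precisely this last step: making the bound on $x\cdot\nabla^\Gamma H_s$ at the maximum quantitative in terms of admissible data, since in the fractional setting a bound on $H_s$ does not by itself control the classical second fundamental form entering the quadratic-separation estimate; one therefore genuinely has to invoke the assumed smoothness of the flow together with the comparison principle to close the estimate on a fixed time interval, the remaining steps being a routine adaptation of the argument for entire graphs.
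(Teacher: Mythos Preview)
Your argument tracks the paper's proof up through the decomposition
\[
x_0^T\cdot\nu(y)=\big[v^{-1}(y)-v^{-1}(x_0)\,\nu(x_0)\cdot\nu(y)\big]+(x_0-y)\cdot\nu(y),
\]
and the treatment of the bracketed term at a spatial maximum of $v$ is identical. The divergence, and the gap, is in how you handle the remaining integral $J(x_0)=\textrm{P.V.}\int_{\partial E_t}\frac{(x_0-y)\cdot\nu(y)}{|x_0-y|^{n+s}}\,dy$. You estimate it by quadratic separation from the tangent plane, i.e.\ by the classical second fundamental form, and then appeal to the standing smoothness assumption plus Corollary~\ref{T:S}. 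That does produce a bound, but the constant depends on the $C^2$ norm of $\partial E_t$, not only on $v(0)$ and $\sup|H_s|$; you yourself flag this as ``the main obstacle.'' As written, therefore, you prove a weaker statement than the theorem, whose point is precisely the clean dependence of $C$.

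What you are missing is that $J$ is not an auxiliary geometric quantity at all: it is a constant multiple of $H_s(x_0)$. Using the divergence identity
\[
\mathrm{div}_y\!\left(\frac{x_0-y}{|x_0-y|^{n+s}}\right)=\frac{s}{|x_0-y|^{n+s}},
\]
and integrating against $\tilde\chi_{E_t}$, the paper obtains $2s(1-s)J(x_0)=s\,H_s(x_0)$ (in its normalization), so that
\[
\partial_t v=v^2\Big((1-s^2(1-s))H_s-2s(1-s)\!\int_{\partial E_t}\frac{v^{-1}(y)-v^{-1}(x)\,\nu(x)\cdot\nu(y)}{|x-y|^{n+s}}\,dy\Big).
\]
At the spatial maximum the integral is nonnegative and one gets $\partial_t v_{\max}\le (1-s^2(1-s))\,v_{\max}^2\,H_s$, which integrates exactly as you do but now with $M$ depending only on $s$ and $\sup|H_s|$. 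Replacing your $C^2$ estimate of $J$ by this identity closes the gap.
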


\begin{proof}

We assume like in the proof of \ref{height function estimate} that the surface is parameterized as in \eqref{simplified eq}, then
we have 
\begin{align*}\partial_t v=&-v^2(x_t\cdot \nu+x\cdot\nu_t)\\
=&v^2(H_s-x\cdot\nabla^{ \Gamma} H_s)\end{align*}

Following the computations in the proof of Theorem  \ref{height function estimate} we have that
$$x\cdot\nabla^{\Gamma} H_s=2s(1-s)\int_{\partial E_t}\frac{x^T\cdot \nu(y)}{|x-y|^{n+s}}dy.$$
since $x^T=x-x\cdot\nu(x) \nu(x)=(x-y)+(y-x\cdot\nu(x)\nu(x))$ we have 

\begin{align*}
\frac{x\cdot\nabla^{\Gamma} H_s}{s(1-s)}=&2\int_{\partial E_t}\frac{(x-y)\cdot \nu(y)}{|x-y|^{n+s}}dy+2\int_{\partial E_t}\frac{v^{-1}(y)-v^{-1}(x)\nu(x)\cdot \nu(y)}{|x-y|^{n+s}}dy\\=&s H_s+2\int_{\partial E_t}\frac{v^{-1}(y)-v^{-1}(x)\nu(x)\cdot \nu(y)}{|x-y|^{n+s}}dy.\end{align*}
Accordingly, we have
$$\partial_t v=v^2
\left((1-s^2(1-s))H_s-2s(1-s)\int_{\partial E_t}\frac{v^{-1}(y)-v^{-1}(x)\nu(x)\cdot \nu(y)}{|x-y|^{n+s}}dy\right).$$
Hence, at a spacial maximum of $v$ we have 
$$\partial_t (\max_{\mathbb S^n} v(\cdot, t))\leq (1- s^2(1-s))\max_{\mathbb S^n} v(\cdot, t)^2 H_s.$$
Then we find that
$$\max_{\mathbb S^n} v(\cdot, t)\leq \frac{\max_{\mathbb S^n} v(\cdot, 0)}{1- (1- s^2(1-s))t \max_{\mathbb S^n} v(\cdot, 0) \sup_{S^\times [0,T)} H_s}.$$
Notice that the bound can be extended as long as $H_s$ remains bounded.
\end{proof}

The previous computation yields a gradient bound and that star-shapedness is preserved:

\begin{corollary}
Assume that $f$ satisfies \eqref{s-flow-in-f}. Then, if  $H_s$ remains bounded, $|\nabla f|$ is bounded for a fixed time that depends of the initial condition and bounds of $H_s$.
\end{corollary}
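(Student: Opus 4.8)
The plan is to translate the statement about $f$ satisfying \eqref{s-flow-in-f} into the language of the preceding theorem on the height-type function $v=(x\cdot\nu)^{-1}$. Recall from Lemma~\ref{EXP}, specifically \eqref{pre99}, that for a star-shaped surface $\partial E_t=\{f(\omega,t)\omega\}$ one has $\omega\cdot\nu = f/\sqrt{|\nabla f|^2+f^2}$, and since the point on the surface is $x=f(\omega)\omega$, this gives $x\cdot\nu = f^2/\sqrt{|\nabla f|^2+f^2}$. Hence
\[
v = (x\cdot\nu)^{-1} = \frac{\sqrt{|\nabla f|^2+f^2}}{f^2} = \frac{1}{f}\sqrt{1+\frac{|\nabla f|^2}{f^2}}.
\]
Solving for $|\nabla f|$ yields $|\nabla f|^2 = f^2(f^2 v^2 - 1)$, so a bound on $v$ together with a bound on $f$ (from above and, crucially, a positive lower bound, to control the $f^2 v^2-1$ factor and prevent $f\to 0$) immediately gives a bound on $|\nabla f|$.

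The steps I would carry out are: first, invoke the previous theorem to get $v(t)\le C$ on $[0,T^*)$ under the hypothesis that $H_s$ remains bounded, where $C$ depends only on $v(0)$ and $\sup|H_s|$; note that $v(0)$ is itself finite and determined by the initial datum $f_0$ via the displayed formula. Second, obtain the required bounds on $f$ itself: the upper bound $f\le R$ follows from the comparison principle (Corollary~\ref{T:S}, trapping between balls), and one also needs $f$ bounded away from zero on the relevant time interval, which again follows from Corollary~\ref{T:S} as long as we stay strictly before the extinction time — this is where the phrase ``for a fixed time that depends on the initial condition'' enters, since we must choose the time interval short enough (or small enough relative to the inner radius $\delta$) that $(\delta^{s+1}-C_0 t)^{1/(s+1)}$ stays positive. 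Third, combine: from $v\le C$ and $0<c_1\le f\le R$ we read off $|\nabla f|^2 = f^2(f^2v^2-1)\le R^2(R^2C^2-1)$, uniformly on the chosen time interval.

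The main obstacle is bookkeeping about the time interval and the lower bound on $f$: the estimate is genuinely local in time because $v$ could blow up as $f\to 0$ near an extinction time, so one must be careful to phrase the conclusion as holding on $[0,T^*)$ with $T^*$ depending on the initial $\delta$, $R$, and the bound on $H_s$, rather than globally. A secondary technical point is ensuring the regularity assumed here is compatible with the application of the previous theorem and of Lemma~\ref{EXP} (i.e.\ $f\in C^1$ in space suffices for \eqref{pre99} to make sense pointwise); since the preceding remark already notes that \eqref{s-flow-in-f} is well posed under weaker regularity, I would simply state that the computation is carried out wherever $f$ is differentiable and $\nu$ is defined. Modulo these caveats, the proof is a direct algebraic consequence of the identity $|\nabla f|^2 = f^2(f^2v^2-1)$ together with the two-sided bound on $f$ and the bound on $v$.
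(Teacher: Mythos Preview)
Your approach is essentially the paper's: compute $x\cdot\nu=f^2/\sqrt{|\nabla f|^2+f^2}$, invoke the preceding theorem for the bound $v\le C$ on $[0,T^*)$, and control $f$ from above via the comparison principle. The paper's proof is in fact a single line: from $v\le C$ one gets $\sqrt{f^2+|\nabla f|^2}\le Cf^2\le C\max f^2(\cdot,0)$, and this already bounds $|\nabla f|$.

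Your ``main obstacle'', however, is not an obstacle at all: no positive lower bound on $f$ is needed for the gradient estimate. In your own formula $|\nabla f|^2=f^2(f^2v^2-1)\le f^4v^2$, only upper bounds on $f$ and $v$ enter (note $f^2v^2-1\ge0$ automatically, since $fv=\sqrt{1+|\nabla f|^2/f^2}\ge1$). The time restriction in the corollary is inherited entirely from the $T^*$ of the preceding theorem, which is governed by $v(0)$ and $\sup|H_s|$; it has nothing to do with preventing $f\to0$. The positivity of $f$ is part of the star-shaped hypothesis, not something the argument must secure.
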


\begin{proof}
Notice that $x=f\omega$ and $\nu =\frac{f\omega-\nabla f }{ \sqrt{f^2+|\nabla f|^2}}$.
Then $x\cdot \nu=\frac{f^2}{ \sqrt{f^2+|\nabla f|^2}}$.

Then $v\leq C$ is equivalent to 
$$ \sqrt{f^2+|\nabla f|^2}\leq C f^2\leq C\max f^2(\cdot,0),$$
which gives the desired result.
\end{proof}

\begin{corollary}
Assume that $E_t$ is a solution to \eqref{fmcfgeneral} and that $E_0$, then $E_t$ remains star-shaped.
\end{corollary}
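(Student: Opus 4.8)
The plan is to read ``$E_t$ remains star-shaped'' as the assertion that $\partial E_t$ keeps a radial graph representation $\partial E_t=\{f(\omega,t)\omega:\omega\in S^{n-1}\}$ with $f(\cdot,t)\in C^1(S^{n-1})$ and $f>0$, equivalently (the origin lying in the interior) that $x\cdot\nu>0$ along $\partial E_t$. With this reading the corollary is an immediate consequence of the theorem and the corollary that precede it in this section: the theorem gives, on some interval $[0,T^*)$, a bound $v=(x\cdot\nu)^{-1}\le C$ with $C=C(v(\cdot,0),\sup|H_s|)$, persisting as long as $H_s$ stays bounded, and the preceding corollary upgrades this to the $C^1$ bound $\sqrt{f^2+|\nabla f|^2}\le C\,\max_{S^{n-1}}f^2(\cdot,0)$.

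First I would fix the initial data: since $E_0$ is star-shaped with $\nu(p)\cdot p\ge0$, Lemma~\ref{EXP}, in particular~\eqref{pre99}, gives $x\cdot\nu=f_0(\omega)\,(\omega\cdot\nu)=f_0^2/\sqrt{|\nabla f_0|^2+f_0^2}>0$ on $\partial E_0$ because $f_0>0$; hence $v(\cdot,0)$ is finite and the hypothesis of the previous theorem is met. Next I would invoke that theorem to get $v(\cdot,t)\le C$ on $[0,T^*)$, so that $x\cdot\nu\ge C^{-1}>0$ along $\partial E_t$ for every such $t$. Since $x\cdot\nu>0$ is exactly the non-degeneracy of the radial map $\omega\mapsto$ (the intersection of $\partial E_t$ with the ray $\{r\omega:r\ge0\}$), this keeps that map a diffeomorphism of $S^{n-1}$ onto $\partial E_t$, so $E_t$ is star-shaped; running the scalar equation~\eqref{s-flow-in-f} for $f$ under the available a priori bounds (the lower bound $f(\omega,t)\ge(\delta^{s+1}-C_0t)^{1/(s+1)}$ from Corollary~\ref{T:S} and the gradient bound from the preceding corollary) shows that the representation stays valid on the whole interval on which $H_s$ is bounded and the flow is smooth.

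The point to handle carefully is the passage from ``$x\cdot\nu$ bounded below'' to ``$E_t$ star-shaped'': one must know that $\partial E_t$ stays an embedded closed hypersurface enclosing the origin — part of the smoothness hypothesis — and that no other portion of $\partial E_t$ meets a given ray. This is where positivity of $x\cdot\nu$ together with $E_t$ being the smooth deformation of the star-shaped $E_0$ is used: by continuity, a ray cannot become tangent to, or cross, $\partial E_t$ a second time without $x\cdot\nu$ first vanishing somewhere, which is excluded by the bound on $v$. The time $T^*$ is then precisely the maximal time on which the flow stays smooth with $H_s$ bounded, consistently with the other corollaries of this section.
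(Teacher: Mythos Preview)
Your proposal is correct and is precisely the argument the paper intends: the corollary is stated without proof, immediately after the sentence ``The previous computation yields a gradient bound and that star-shapedness is preserved,'' so it is meant to follow directly from the bound $v=(x\cdot\nu)^{-1}\le C$ of the preceding theorem. Your added care---using Corollary~\ref{T:S} to keep the origin interior and arguing that $x\cdot\nu>0$ on all of $\partial E_t$ forces each ray from the origin to meet $\partial E_t$ exactly once---fills in exactly the geometric step the paper leaves implicit.
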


\section*{Acknowledgements}
It is a pleasure to thank Eleonora Cinti, Luca Lombardini and Carlo Sinestrari
for their very interesting and useful
comments on a preliminary version of this manuscript. The first author has been partially supported by Proyecto Fondecyt Regular 1150014.
The second author has been supported by the ERC
grant 277749 ``EPSILON Elliptic Pde's and Symmetry of Interfaces and Layers for
Odd Nonlinearities'' and the
PRIN grant 201274FYK7 ``Aspetti variazionali e
perturbativi nei problemi differenziali nonlineari''.


\begin{thebibliography}{99}

\bibitem{AV}
N. Abatangelo and E. Valdinoci,
{\em A notion of nonlocal curvature},
Numer. Funct. Anal. Optim. 35 (2014), no. 7-9, 793-815. 

\bibitem{ADP}
L. Ambrosio, G. De Philippis and L. Martinazzi,
{\em Gamma-convergence of nonlocal perimeter functionals},
Manuscr. Math. 134 (2011), no. 3-4, 377-403.

\bibitem{BFV}
B. Barrios, A. Figalli and E. Valdinoci,
{\em Bootstrap regularity for integro-differential operators and its application to nonlocal minimal surfaces},
Ann. Sc. Norm. Super. Pisa Cl. Sci. (5) 13 (2014), no. 3, 609-639.

\bibitem{BH}
S. Brendle and G. Huisken, {\em Mean curvature flow with surgeries of mean convex surfaces in $\rr^3$}, Invent. Math. Online 
DOI 10.1007/s00222-015-0599-3, 2015.


\bibitem{brezis}
J. Bourgain, H. Brezis and P. Mironescu,
{\em Another look at Sobolev spaces},
in Optimal Control and Partial Differential Equations,
439-455, 2001.



\bibitem{CFSW} X. Cabr\'e, M. Moustapha Fall, J.Sola\`a Morales  and T. Weth,
{\em Curves and surfaces with constant nonlocal mean curvature: Meeting Alexandrov and Delauney}. Preprint.






\bibitem{CRS}
L. Caffarelli, J.-M. Roquejoffre and O. Savin,
{\em Nonlocal minimal surfaces},
Comm. Pure Appl. Math. 63 (2010), no. 9, 1111-1144.

\bibitem{CaSo}
L. A. Caffarelli and P. Souganidis,
{\em Convergence of nonlocal threshold
dynamics approximations to front propagation},
Arch. Ration. Mech. Anal. 195 (2010), no. 1, 1-23.

\bibitem{CV}
L. Caffarelli and E. Valdinoci,
{\em Uniform estimates and limiting arguments for nonlocal minimal surfaces},
Calc. Var. Partial Differ. Equ. 41 (2011), no. 1-2, 203-240.

\bibitem{CV2}
L. Caffarelli and E. Valdinoci, 
{\em Regularity properties of nonlocal minimal
surfaces via limiting arguments}, Adv. Math. 248 (2013), 843-871.

\bibitem{JS}
E. Cinti, J. Serra and E. Valdinoci,
{\em Quantitative flatness results and BV estimates
for nonlocal anisotropic minimal surfaces}, preprint.

\bibitem{CMP1} A. Chambolle, M. Morini and M. Ponsiglione,
{\em A nonlocal mean curvature flow and its semi-implicit time-discrete approximation}, SIAM J. Math. Anal. 44 (2012), no. 6, 4048-4077.

\bibitem{CMP2} A. Chambolle, M. Morini and M. Ponsiglione,
{\em Minimizing movements and level set approaches to nonlocal variational geometric flows}, Geometric partial differential equations, 93-104, CRM Series, 15, Ed. Norm., Pisa, 2013.

\bibitem{CMP3} A. Chambolle, M. Morini and M. Ponsiglione,
{\em Nonlocal Curvature Flows},
Arch. Ration. Mech. Anal. 218 (2015), no. 3, 1263-1329.

\bibitem{IMB}
C. Imbert, 
{\em Level set approach for fractional mean curvature flows},
Interfaces Free Bound. 11 (2009), no. 1, 153-176.

\bibitem{Chang-Lara_Davila1}
H. C. Lara, G. D\'avila. 
{\em Regularity for solutions of non local, non symmetric
equations}.  Ann. Inst. H. Poincar\'e Anal. Non Lin\'eaire (2012).


\bibitem{Chang-Lara_Davila2} H. C. Lara, G. D\'avila.{\em Regularity theory for fully nonlinear integrodifferential
parabolic equations}. (2014) Calc. Var. Partial Differential Equations.

\bibitem{Chang-Lara_Davila3} H. C. Lara, G. D\'avila. Regularity for solutions of nonlocal parabolic equations
II (2014) J. Differential Equations


\bibitem{davila}
J. D\'avila,
{\em On an open question about functions of bounded variation}, Calc. Var. Partial Differential Equations 15 (2002), no. 4, 519-527.


\bibitem{Dav}
J. D{\'a}vila, M. del Pino and J. Wei,
{\em Nonlocal $s$-minimal surfaces and Lawson cones},
preprint, \\
{\tt http://arxiv.org/abs/1402.4173}

\bibitem{I4}
A. Di Castro, M. Novaga, B. Ruffini and E. Valdinoci,
{\em Nonlocal quantitative isoperimetric inequalities},
to appear in Calc. Var. Partial Differential Equations,
ISSN: 0944-2669,
DOI: 10.1007/s00526-015-0870-x.


\bibitem{DPFV0}
S. Dipierro, A. Figalli, G. Palatucci and E. Valdinoci, {\em Asymptotics of the $s$-perimeter as $s\searrow0$},
Discrete Contin. Dyn. Syst. 33 (2013), no. 7, 2777-2790.

\bibitem{KEMCFBook}
  Klaus Ecker, \emph{Regularity theory for mean curvature flow},
  Progress in Nonlinear Differential Equations and their Applications,
  57, Birkh\"auser Boston Inc., Boston, MA, 2004, MR2024995, Zbl
  1058.53054.
  
  \bibitem{EckerHuiskenAnn}
  K. Ecker and G. Huisken, \emph{Mean curvature evolution of
    entire graphs}, Ann. of Math. (2) \textbf{130} (1989), no.~3,
  453--471, MR1025164, Zbl 0696.53036.

\bibitem{EckerHuiskenInvent}
  K. Ecker and G. Huisken, \emph{Interior estimates for
    hypersurfaces moving by mean curvature}, Invent. Math.
  \textbf{105} (1991), no.~3, 547--569, MR1117150, Zbl 0707.53008.

\bibitem{Head} J. Head. The surgery and level-set approaches to mean curvature flow.
PhD-thesis, FU Berlin and AEI Potsdam, 2011.

\bibitem{FFMMM} A. Figalli, N. Fusco, F. Maggi, V. Millot and M.
Morini, {\em
Isoperimetry and stability properties of balls with respect to nonlocal 
energies},
Comm. Math. Phys. 336 (2015), no. 1, 441-507.

\bibitem{Fi}
A. Figalli and E. Valdinoci,
{\em Regularity and Bernstein-type results for nonlocal minimal surfaces},
to appear in
J. Reine Angew. Math.,
ISSN: 1435-5345,
DOI: 10.1515/crelle-2015-0006.

\bibitem{Fu}
N. Fusco, V. Millot and M. Morini,
{\em A quantitative isoperimetric inequality for fractional perimeters},
J. Funct. Anal. 261 (2011), no. 3, 697-715.

\bibitem{H}
G. Huisken, {\em
Flow by mean curvature of convex surfaces into spheres},
J. Differential Geom. 20 (1984), no. 1, 237-266.

\bibitem{HS}
  G. Huisken and C. Sinestrari, {\em Mean curvature flow with
    surgeries of two-convex hypersurfaces}, Invent. Math. \textbf{175}
  (2009), no.~1, 2009, 137--221,
  MR2461428, Zbl 1170.53042.
\bibitem{M} C. Mantegazza, {\em  Lecture Notes on Mean Curvature Flow,} Birkh\"auser, Basel, 2011.

\bibitem{MAZ0}
V. Maz'ya,
{\em Lectures on isoperimetric and isocapacitary inequalities in the theory of Sobolev spaces}, Heat kernels and analysis on manifolds, graphs, and metric spaces (Paris, 2002), 307-340,
Contemp. Math., 338, Amer. Math. Soc., Providence, RI, 2003.

\bibitem{Mullins}
W. M. Mullins, {\em TwoÐdimensional motion of idealized grain boundaries,} J. Appl. Phys. 27 (1956), 900Ð904

\bibitem{SS} M. Saez, O. Schn\"urer {\em Mean curvature flow without singularities}  J. Differential Geom. 97 (2014), no.~3,
  545--570.

\bibitem{SV}
O. Savin and E. Valdinoci,
{\em Regularity of nonlocal minimal cones in dimension 2},
Calc. Var. Partial Differential Equations 48 (2013), no. 1-2, 33-39. 

\bibitem{SV-ga1}
O. Savin and E. Valdinoci,
{\em $\Gamma$-convergence for nonlocal phase transitions},
Ann. Inst. H. Poincar\'e Anal. Non Lin\'eaire 29 (2012), no. 4, 479-500.

\bibitem{SV-ga2}
O. Savin and E. Valdinoci,
{\em Density estimates for a variational model driven
by the Gagliardo norm},
J. Math. Pures Appl. (9) 101 (2014), no. 1, 1-26.

\bibitem{Ur}
J. I. E. Urbas, {\em
On the expansion of starshaped hypersurfaces
by symmetric functions of their principal curvatures},
Math. Z. 205 (1990), no. 3, 355-372.





\bibitem{WW}
J. Van Schaftingen and M. Willem,
{\em Set transformations, symmetrizations and isoperimetric inequalities},
Nonlinear analysis and applications to physical sciences, 135-152, Springer Italia, Milan, 2004.

\end{thebibliography}
\end{document}